\newtheorem{thm}{Theorem}[section]
\newtheorem{lem}[thm]{Lemma}
\newtheorem{cor}[thm]{Corollary}
\newdefinition{rmk}[thm]{Remark}
\newproof{pf}{Proof}
\newproof{pot}{Proof of Theorem \ref{thm2}}
\numberwithin{equation}{section}
\newtheorem{prop}[thm]{Proposition}
\theoremstyle{definition}
\newtheorem{defin}[thm]{Definition}
\newcommand{\ca}{\ensuremath{\mathcal A}}
\DeclareMathOperator{\rank}{rank}
\DeclareMathOperator{\N}{\mathbb{N}}
\DeclareMathOperator{\R}{\mathbb{R}}
\DeclareMathOperator{\Z}{\mathbb{Z}}
\DeclareMathOperator{\T}{\mathbb{T}}
\begin{document}

\begin{frontmatter}
\title{Optimal approximation of multivariate periodic Sobolev functions in the sup-norm}
\author{Fernando Cobos\fnref{label2}}
\address{Departamento de An\'{a}lisis Matem\'{a}tico, Facultad de Matem\'{a}ticas, Plaza de las Ciencias 3, 28040 Madrid. Spain.}
\ead{cobos@mat.ucm.es}
\author{Thomas K\"{u}hn \corref{cor1} \fnref{label2}}
\address{Mathematisches Institut, Universit\"at Leipzig, Augustusplatz 10, 04109 Leipzig. Germany.}
\ead{kuehn@math.uni-leipzig.de}
\author{Winfried Sickel}
\address{Mathematisches Institut, Friedrich-Schiller-Universit\"at Jena, Ernst-Abbe-Platz 2, 07737 Jena. Germany.}
\ead{winfried.sickel@uni-jena.de}
\cortext[cor1]{Corresponding author.}

\fntext[label2]{Supported in part by the
Spanish Ministerio de Econom\'ia y Competitividad (MTM2013-42220-P).
}
\begin{abstract}
Using tools from the theory of operator ideals and $s$-numbers, we develop a general approach to transfer estimates for 
$L_2$-approximation of Sobolev functions into estimates for 
$L_\infty$-approximation, with precise control of all involved constants. As an illustration, 
we derive some results for periodic isotropic Sobolev spaces $H^s (\mathbb{T}^d)$ and 
Sobolev spaces of dominating mixed smoothness $H^s _{\rm{mix}} (\mathbb{T}^d)$, always equipped with natural norms.
Some results for isotropic as well as dominating mixed Besov spaces are also obtained.
\end{abstract}
\begin{keyword}
Approximation numbers, isotropic Sobolev spaces, isotropic Besov spaces, Sobolev spaces of dominating mixed smoothness, 
Besov spaces of dominating mixed smoothness,
Wiener algebra, rate of convergence, $d$-dependence of the constants.
\MSC[2010] 46E35, 41A25
\end{keyword}
\end{frontmatter}


\section{Introduction}


Nowadays there is an increasing interest in the study of linear approximation (approximation numbers) in the context of 
Sobolev spaces. 
This is motivated by the fact that a number of problems in finance and quantum chemistry are modeled on function spaces on
high-dimensional domains, and often the functions to be approximated have some Sobolev regularity (see, for example, \cite{Y}).

In the papers \cite{KSU1, KSU2}, T.~Ullrich and two of the present authors studied linear approximation of functions
in the isotropic periodic Sobolev spaces $H^s (\mathbb{T}^d)$ and in the smaller spaces $H^s _{\rm{mix}} (\mathbb{T}^d)$ 
of dominating mixed smoothness, see also the paper by D\~ung and Ullrich \cite{DiUl13} in this context (but these authors used different norms). 
The error was measured in $L_2 (\mathbb{T}^d)$, with particular emphasis on the constants and 
their dependence on the dimension $d$. In \cite{KSU1, KSU2} the exact decay rate of the constants as $d \rightarrow \infty$ 
was found, which turned out to be polynomial in $d$ for the isotropic spaces, and superexponential in $d$ for the mixed spaces.

In the present paper we deal with the same approximation problems, but now the error is measured in the $\sup$-norm. 
For this aim, we develop a general method which allows to transfer results on $L_2$-approximation into results on 
$L_\infty$-approximation. Our approach is based on tools from the theory of operator ideals and s-numbers, in the 
sense of the monographs by Pietsch \cite{P1, P3}. In particular, we work with absolutely $2$-summing operators.

We consider mainly spaces of periodic functions on the $d$-dimensional torus, whose
norms are weighted $\ell_2$-sums of Fourier coefficients. In addition to the correct rate of decay 
of the approximation numbers, we also obtain very precise information on the "hidden" constants, especially their 
dependence on the dimension $d$. As an illustration, we apply our general method to isotropic Sobolev
spaces $H^s (\mathbb{T}^d)$ and Sobolev spaces $H^s _{\rm{mix}} (\mathbb{T}^d)$ of dominating mixed smoothness. 
Our estimates allow to control the dependence of the constants on the dimension $d$, the smoothness $s$ and the particular norm used in the Sobolev space.
We also show that $L_\infty (\mathbb{T}^d)$ can be replaced in the isotropic case by the Wiener algebra $\ca (\mathbb{T}^d)$ or
the Besov space $B^0_{\infty,1} (\T^d)$, and in the mixed case by the 
Besov space of dominating mixed smoothness $S^0_{\infty,1} B(\T^d)$, without changing the associated approximation numbers. 
This is not only surprising from a theoretical point of view, but also of some practical use, 
since the Littlewood-Paley characterization of the target spaces simplifies the computation of approximation numbers
(or any other $s$-numbers).

Sobolev and Besov spaces of dominating mixed smoothness are representing first attempts in 
approximation theory to deal with high dimensions.
These spaces are much smaller than their isotropic counterparts, they have 
attracted a lot of interest in approximation theory since the early 1960s
(mainly in Russia), and also in 
IBC (information-based complexity), we refer to the monographs \cite{Tem} and 
\cite{NoWo08}, \cite{NoWo10}, \cite{NoWo12}. 

Finally, some upper estimates for approximation in the $L_p$-norm, $2 <p<\infty$, are established.

The plan of the paper is simple. In Section $2$ we recall the basic notions on function spaces and operator theory 
that we shall need. Then, in Section $3$, we establish the abstract results, and 
in the final Section $4$ we apply these results to Sobolev and Besov spaces.


\section{Preliminaries}\label{Section 2}


First we fix some notation. 
For $d\in\mathbb{N}$ , $x=(x_1 , \ldots, x_d) \in \mathbb{R}^d $ and  $0<p< \infty$
let $|x|_p =\Big ( \sum_{j=1} ^d |x_j|^p\Big )^{1/p}$ ,  and for $p=\infty$ we set
$|x|_\infty =\max_{1\leq j \leq d}|x_j|$. 
In what follows $\T$ denotes the torus, i.e. $\T =[0, 2\pi]$ where the endpoints 
of the interval are identified, and $\mathbb{T}^d$ stands for the $d$-dimensional torus. 
We equip $\mathbb{T}^d$ with the \emph{normalized}
Lebesgue measure $(2\pi)^{-d}dx$. Consequently,  $\{e^{i k x}: k\in \mathbb{Z}^d\}$ is an
orthonormal basis in $L_2 (\mathbb{T}^d)$, where $kx=\sum_{j=1}^d k_jx_j$\,.

The \emph{Fourier coefficients} of a function $f\in L_1 (\mathbb{T}^d)$ are defined as
\[
\widehat{f}(k)= (2\pi)^{-d}\int_{\mathbb{T}^d} f(x) e^{- i k x} dx\quad,\quad k\in \mathbb{Z}^d\,.
\]
For $0<s<\infty$ and $0< r \leq\infty$ we denote by $H^{s,r}(\mathbb{T}^d)$ 
the \emph{isotropic Sobolev space} formed by all $f\in L_2(\mathbb{T}^d)$ having a finite norm
\[
\|f|H^{s,r}(\mathbb{T}^d)\| = 
\Big (\sum_{k\in \mathbb{Z}^d}\big(1+\sum_{j=1}^d |k_j|^r \big)^{2s/r} |\widehat{f}(k)|^2 \Big )^{1/2} \,.
\]
Clearly, for fixed $s$, all these norms are equivalent, whence all spaces $H^{s,r}(\mathbb{T}^d)$ with $0<r\le\infty$ 
coincide. The superscript $r$ just indicates which norm we are considering.

For \emph{integer smoothness} $s=m\in \mathbb{N}$, the most natural norms are those with $r=2$ and $r=2m$. 
Indeed, let $D^\alpha f$ be the distributional derivative of 
$f$ of order $\alpha=(\alpha_1, \dots, \alpha_d)$. As shown in \cite{KSU1}, one has
\[
\frac{1}{\sqrt{m!}}\|f|H^{m,2}(\mathbb{T}^d)\| \le \Big ( \sum_{|\alpha|_1 \leq m} 
\big\|D^\alpha f | L_2 (\mathbb{T}^d)\big\|^2 \Big )^{1/2}
\le \|f|H^{m,2}(\mathbb{T}^d)\|\,.
\]
Note that the equivalence constants depend only on the smoothness $m$, but not on the dimension $d$. 

If $r=2m$, one has even equality
\[
\|f|H^{m,2m}(\mathbb{T}^d)\| = \Big ( \|f|L_2(\mathbb{T}^d )\|^2 + \sum_{j=1} ^d  
\big\|\frac{\partial ^m f}{\partial x_j ^m } \big| L_2 (\mathbb{T}^d)\big\|^2 \Big )^{1/2}\,.
\]

The Sobolev space $H^{s,r} _{\rm{mix}}(\mathbb{T}^d)$ of \emph{dominating mixed smoothness} consists of all 
$f\in L_2(\mathbb{T}^d)$ having a finite norm
\[
\|f|H^{s,r}_{\rm{mix}}(\mathbb{T}^d)\| = \Big ( \sum_{k\in \mathbb{Z}^d}\prod _{j=1} ^d \big(1+ |k_j|^r \big)^{2s/r} 
|\widehat{f}(k)|^2 \Big )^{1/2} 
\]
(see \cite{KSU2}). For $s=m\in \mathbb{N}$ and $r=2m$, it turns out that
\[
\|f|H^{m,2m}_{\rm{mix}}(\mathbb{T}^d)\| =\Big ( \sum_{\alpha \in \{0,m\}^d} \|D^\alpha f | 
L_2 (\mathbb{T}^d)\|^2 \Big )^{1/2}\,.
\]

Let us give a short comment on the role played by the parameter $r$.  
Of course, for fixed $d$ and $s$, different $r$'s result in equivalent norms, but the equivalence constants 
depend heavily on $d$. From our point of view it is interesting to see how these changes of the norm 
influence the behavior of the associated approximation numbers. 


The $n$-th \emph{approximation number} of a (bounded linear) 
operator $T: X \to Y$ between Banach spaces is defined as
\[a_n (T)= \inf \{\|T-A\| : \rank A<n\}\, ,\]
i.e. $a_n(T)$ is the optimal error of approximating $T$ by operators of rank less than $n$.
We refer to \cite{P1, P2, P3} for properties of these numbers. 
Let us just recall that for compact operators $T$ between Hilbert spaces $a_n(T)$ 
coincides with the $n$-th singular number $s_n (T)$ of $T$.

An operator $T: X\to Y$ is called \emph{absolutely $2$-summing} if there is a constant $C>0$ such that for 
all $n\in \mathbb{N}$ and all $x_1 , \dots, x_n \in X$ the inequality
\begin{equation}\label{F2}
\Big (\sum_{j=1} ^n \|\, T x_j \, |Y\|^2 \Big )^{1/2} \leq C \sup_{\|x'|X'\|\leq 1}
\Big( \sum_{j=1} ^n |\langle x_j , x' \rangle|^2\Big)^{1/2}\,.
\end{equation}
holds, where $X'$ is the dual space of $X$. The $2$-summing norm $\pi _2 (T)$ is
defined as the infimum of all $C>0$ satisfying (\ref{F2}). For more information we refer to \cite[Chapter 17]{P1}.
Later on we shall use the fact that for operators $T:H\to G$ between Hilbert spaces $H$ and $G$, the $2$-summing norm is 
equal to the Hilbert-Schmidt norm, whence for any orthonormal basis $\{e_i: i\in I\}$ of $H$ it holds
$$
\pi_2(T:H\to G)=\Big(\sum_{i\in I}\Vert Te_i|G\Vert^2\Big)^{1/2}\,.
$$
Given two sequences  $(a_n)$ and $(b_n)$, we write $a_n\lesssim b_n$ if
there is a constant $c>0$ such that $a_n \leq c\, b_n$ for all $n\in\mathbb{N}$. 
The \emph{weak equivalence} $a_n \thicksim b_n$ means that
$a_n\lesssim b_n$ and $a_n\lesssim b_n$, and the \emph{strong equivalence} $a_n \asymp b_n$ means
$\lim\limits_{n\to\infty}\frac{a_n}{b_n}=1$.

\section{General results}\label{Section 3}


In what follows $F_d(w)$ always stands for a Hilbert space of integrable functions on the $d$-dimensional torus 
$\mathbb{T}^d$ such that
\begin{equation}\label{ws-06}
f\in F_d(w) \enspace\Longleftrightarrow\enspace \|f|F_d(w)\|:= 
\Big ( \sum_{k\in \mathbb{Z}^d}w(k)^2 |\widehat{f}(k)|^2 \Big )^{1/2} < \infty\,.
\end{equation}
Here $w(k)>0\,,\, k\in \mathbb{Z}^d \,,$ are certain weights. Important examples of such spaces are the Sobolev spaces 
$H^{s,r}(\mathbb{T}^d)$ and $H^{s,r} _{\rm{mix}}(\mathbb{T}^d)$ introduced in the previous section.

We shall also deal with the \emph{Wiener algebra} $\ca(\mathbb{T}^d)$, which is the collection of all integrable 
functions on $\mathbb{T}^d$ with absolutely convergent Fourier series. $\ca(\mathbb{T}^d)$ is a Banach space 
with respect to the norm
\[\|f|\ca(\mathbb{T}^d)\|=\sum_{k\in \mathbb{Z}^d} |\widehat{f}(k)|\,. \]
Here, as well as in the context of Sobolev and Besov spaces, we shall make use of the following convention:
If the equivalence class of a measurable function $f$ contains a continuous representative, 
then we call $f$ itself continuous and work with the continuous representative.

Several necessary and sufficient conditions are known for a function to belong to $\ca(\mathbb{T}^d)$ 
(see \cite[Chapitre II]{Ka}). We just recall a characterization that describes the Wiener algebra as
an approximation space. Let $O_0=\{0\}$ and, for $n\in \mathbb{N}$, let $O_n$ be the set of all trigonometrical 
polynomials having at most $n$ non-zero coefficients. According to a result of 
Ste$\check{\text{c}}$kin \cite{Ste} (see also \cite{Pi}), a function $f$ belongs to $\ca(\mathbb{T}^d)$ if and only if
\[\sum_{n=1}^\infty n^{-1/2} \inf\{\|f-p|L_2 (\mathbb{T}^d )\|:p\in O_n\} < \infty\,.\]

Our first general result provides a necessary and sufficient condition on the weights $w(k)$ which guarantees 
the existence of continuous or, equivalently, compact embeddings of $F_d(w)$ in 
$\ca(\mathbb{T}^d)$, $C(\mathbb{T}^d)$ or $L_\infty(\mathbb{T}^d)$.

\begin{thm}\label{Theorem 1}
    The following conditions are equivalent.
    \begin{enumerate}
    \item[(i)] $F_d(w) \hookrightarrow  \ca(\mathbb{T}^d)$ compactly
        \item[(ii)]  $F_d(w) \hookrightarrow  \ca(\mathbb{T}^d)$ boundedly
        \item[(iii)]   $F_d(w) \hookrightarrow C(\mathbb{T}^d)$ compactly
        \item[(iv)]  $F_d(w) \hookrightarrow C(\mathbb{T}^d)$ boundedly
        \item[(v)] $F_d(w) \hookrightarrow L_\infty (\mathbb{T}^d)$ compactly
        \item[(vi)]  $F_d(w) \hookrightarrow L_\infty (\mathbb{T}^d)$ boundedly
        \item[(vii)] $\sum\limits_{k\in \mathbb{Z}^d} w(k)^{-2} < \infty$
    \end{enumerate}
\end{thm}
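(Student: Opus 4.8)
The plan is to prove the cycle of implications
\[
\text{(vii)}\Rightarrow\text{(i)}\Rightarrow\text{(ii)}\Rightarrow\text{(iv)}\Rightarrow\text{(vi)}\Rightarrow\text{(vii)},
\]
and then to bring in (iii) and (v) through the elementary chain $\text{(i)}\Rightarrow\text{(iii)}\Rightarrow\text{(v)}\Rightarrow\text{(vi)}$. Most of these arrows come for free. Indeed, the continuous embeddings $\ca(\T^d)\hookrightarrow C(\T^d)\hookrightarrow L_\infty(\T^d)$, each of norm at most $1$ (since $\|f|L_\infty(\T^d)\|\le\|f|\ca(\T^d)\|$ and a function with absolutely convergent Fourier series is continuous), show that a bounded, respectively compact, embedding of $F_d(w)$ into $\ca(\T^d)$ is also a bounded, respectively compact, embedding into $C(\T^d)$ and into $L_\infty(\T^d)$, because composing a compact operator with a bounded one yields a compact operator. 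Combined with the trivial fact that compactness implies boundedness, this settles every implication except the two carrying the actual content, namely $\text{(vii)}\Rightarrow\text{(i)}$ and $\text{(vi)}\Rightarrow\text{(vii)}$.

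For $\text{(vii)}\Rightarrow\text{(i)}$ I would first record boundedness by Cauchy--Schwarz: for $f\in F_d(w)$,
\[
\|f|\ca(\T^d)\|=\sum_{k\in\Z^d}w(k)^{-1}\,w(k)|\widehat f(k)|\le\Big(\sum_{k\in\Z^d}w(k)^{-2}\Big)^{1/2}\|f|F_d(w)\|.
\]
To upgrade this to compactness, let $J$ denote the embedding and let $P_N$ be the finite-rank projection keeping only the Fourier modes with $|k|_\infty\le N$. Applying the same estimate to the tail gives
\[
\|J-JP_N\|\le\Big(\sum_{|k|_\infty>N}w(k)^{-2}\Big)^{1/2},
\]
and the right-hand side tends to $0$ as $N\to\infty$ by (vii). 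Thus $J$ is the operator-norm limit of the finite-rank operators $JP_N$, hence compact.

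The main obstacle is the converse $\text{(vi)}\Rightarrow\text{(vii)}$, where summability of $w(k)^{-2}$ must be extracted from a mere bounded embedding into $L_\infty(\T^d)$; the trick is to test it against the right functions. Given a bound $\|f|L_\infty(\T^d)\|\le C\,\|f|F_d(w)\|$ and a finite set $\Lambda\subset\Z^d$, I would consider the trigonometric polynomial $g_\Lambda(x)=\sum_{k\in\Lambda}w(k)^{-2}e^{ikx}$, which lies in $F_d(w)$ with $\|g_\Lambda|F_d(w)\|^2=\sum_{k\in\Lambda}w(k)^{-2}$. Since $g_\Lambda$ is continuous, its $L_\infty$-norm dominates its value at the origin, so writing $S_\Lambda=\sum_{k\in\Lambda}w(k)^{-2}$ we obtain
\[
S_\Lambda=g_\Lambda(0)\le\|g_\Lambda|L_\infty(\T^d)\|\le C\,\|g_\Lambda|F_d(w)\|=C\,S_\Lambda^{1/2},
\]
whence $S_\Lambda\le C^2$. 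As this holds for every finite $\Lambda$, taking the supremum yields $\sum_{k\in\Z^d}w(k)^{-2}\le C^2<\infty$, which is (vii). This closes the cycle and, together with $\text{(i)}\Rightarrow\text{(iii)}\Rightarrow\text{(v)}\Rightarrow\text{(vi)}$, establishes the equivalence of all seven conditions.
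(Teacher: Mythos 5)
Your proposal is correct, and both of the substantive implications are handled by arguments genuinely different from the paper's. For $\text{(vi)}\Rightarrow\text{(vii)}$ the paper composes $F_d(w)\hookrightarrow L_\infty(\T^d)$ with the absolutely $2$-summing identity $L_\infty(\T^d)\to L_2(\T^d)$, concludes that $I_d:F_d(w)\to L_2(\T^d)$ is Hilbert--Schmidt, and evaluates its Hilbert--Schmidt norm on the orthonormal basis $\varphi_k=e^{ikx}/w(k)$ to get $\sum_k w(k)^{-2}=\pi_2(I_d)^2<\infty$; you instead test the embedding against the extremal polynomials $g_\Lambda$, which is more elementary (no operator-ideal machinery) and equally rigorous --- your use of $g_\Lambda(0)\le\|g_\Lambda|L_\infty\|$ is legitimate because $g_\Lambda$ is a trigonometric polynomial, hence continuous. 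For $\text{(vii)}\Rightarrow\text{(i)}$ the paper factors $I_d=B\,D\,A$ through $\ell_2(\Z^d)\to\ell_1(\Z^d)$ and invokes Pietsch's exact formula $a_n(D_\sigma:\ell_2\to\ell_1)=\bigl(\sum_{j\ge n}\sigma_j^2\bigr)^{1/2}$, whereas you use a tail-projection argument to exhibit $J$ as a norm limit of finite-rank operators. The trade-off is purely quantitative: the paper's factorization is not done for its own sake but to extract the sharp bound $a_n(I_d:F_d(w)\to\ca(\T^d))\le\bigl(\sum_{j\ge n}\sigma_j^2\bigr)^{1/2}$ (their Lemma 3.2), which is the upper half of the exact formula in Theorem 3.4; likewise their $2$-summing computation is the template for the matching lower bound. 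Your cube projections $P_N$ give the weaker indexing $a_{(2N+1)^d+1}(J)\le\bigl(\sum_{|k|_\infty>N}w(k)^{-2}\bigr)^{1/2}$, though projecting instead onto the $n-1$ largest values of $1/w(k)$ would recover the paper's lemma exactly. As a proof of the equivalence alone, yours is complete and arguably cleaner.
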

\begin{pf}
Due to the continuous embeddings
$\ca(\mathbb{T}^d) \hookrightarrow C(\mathbb{T}^d)\hookrightarrow L_\infty(\mathbb{T}^d)$, the implications
$(i)\Rightarrow (ii) \Rightarrow(iv) \Rightarrow(vi)$ and $(i) \Rightarrow (iii) \Rightarrow (v) \Rightarrow (vi)$ are trivial.
So it remains to prove $(vi)\Rightarrow (vii)$ and $(vii)\Rightarrow (i)$.\\
{\em Step 1.} Proof of $(vi)\Rightarrow (vii)$:\\ 
The formal identity from $L_\infty (\mathbb{T}^d)$ into $L_2(\mathbb{T}^d)$ is absolutely
$2$-summing, whence $I_d :F_d(w) \to L_2(\mathbb{T}^d)$ is also $2$-summing, and therefore it is
Hilbert-Schmidt, because $F_d(w) $ and $ L_2(\mathbb{T}^d)$ are Hilbert spaces. 
Let $\varphi_k (x)=e^{ i k x}/w(k)$. By definition of $F_d(w)$, the set $\{\varphi_k : k\in \mathbb{Z}^d\}$ 
is a complete orthonormal system in $F_d(w)$, and consequently we have
 \[ 
 \sum_{k\in \mathbb{Z}^d}\frac{1}{w(k)^2}  = \sum_{k\in \mathbb{Z}^d}\|\varphi_k | L_2 (\mathbb{T}^d )\|^2
         = \pi_2 \big(I_d : F_d(w) \to L_2 (\mathbb{T}^d)\big )^2 < \infty. 
 \]
 {\em Step 2.} Proof of $(vii)\Rightarrow (i)$:\\ 
Let $A:F_d(w) \to \ell_2 (\mathbb{Z}^d )$ be the operator defined 
by $Af=(w(k)\widehat{f}(k))_{k\in \mathbb{Z}^d}$. Clearly $A$ is an isometry. 
For $\xi=(\xi_k)_{k\in \mathbb{Z}^d}\,$ we define $D\xi=(\xi_k / w(k))_{k\in \mathbb{Z}^d}$. 
Using (vii) and H\"{o}lder's inequality, we see that $D:\ell_2(\mathbb{Z}^d) \to \ell_1(\mathbb{Z}^d)$ is 
bounded. Now, for $\xi=(\xi_k)\in \ell_1(\mathbb{Z}^d)$, we set 
$(B\xi)(x)=\sum_{k\in \mathbb{Z}} \xi_k e^{i k x}$. The series that defines $B\xi$ converges absolutely and uniformly, 
whence the operator $B:\ell_1(\mathbb{Z}^d)\to  \ca (\mathbb{T}^d)$ is bounded with norm one, and the 
following commutative diagram holds.

\begin{center}
\setlength{\unitlength}{.7mm}
\begin{picture}(100,50)
\linethickness{.5pt} \put(3,10){$\ell_2 (\mathbb{Z}^d)$} \put(3,40){$F_d(w)$}
\put(63,10){$\ell_1 (\mathbb{Z}^d)$} \put(63,40){$\ca (\mathbb{T}^d)$}

\put(18,42){\vector(1,0){44}} \put(12,35){\vector(0,-1){18}}
\put(19,12){\vector(1,0){43}} \put(67,17){\vector(0,1){20}}

\put(68,25){$B$} \put(40,45){$I_d$} \put(7,25){$A$}
\put(40,14){$D$}

\end{picture}
\end{center}
Let $(\sigma_n)_{n\in \mathbb{N}}$ be the non-increasing rearrangement of 
$(1/w(k))_{k\in \mathbb{Z}^d}$, which by assumption belongs to $\ell_2(\mathbb{Z}^d)$,
and consider the associated diagonal operator 
$D_\sigma x=(\sigma_n x_n)_{n\in \mathbb{N}}$ for $x=(x_n)_{n\in \mathbb{N}}$. 
The rearrangement of $(1/w(k))_{k\in \mathbb{Z}^d}$ into the sequence $(\sigma_n)_{n\in\N}$
defines a one-to-one correspondence between the index sets $\mathbb{Z}^d$ and $\mathbb{N}$, 
whence the multiplication property of approximation numbers gives
\[
a_n (D:\ell_2(\mathbb{Z}^d) \to \ell_1(\mathbb{Z}^d)) = a_n (D_\sigma :\ell_2 \to \ell_1) \, . 
\]
The commutative diagram yields
\begin{eqnarray*}   \nonumber
a_n(I_d  : F_d(w) \to \ca(\mathbb{T}^d))
&\leq& \|A\|\cdot a_n (D:\ell_2(\mathbb{Z}^d) \to\ell_1(\mathbb{Z}^d))\cdot \|B\| \\
&=& a_n (D_\sigma :\ell_2 \to \ell_1) =\Big( \sum_{j=n} ^\infty \sigma_j ^2\Big)^{1/2}\,,
\end{eqnarray*}
where the last equality follows from \cite[Theorem 11.11.4]{P1}. This implies \\
$\lim\limits_{n\to \infty} a_n(I_d: F_d(w) \to \ca(\mathbb{T}^d))=0$\,, and thus the embedding 
$I_d: F_d(w)\to \ca(\mathbb{T}^d)$ is compact. The proof is complete.
    \qed
\end{pf}

In Step 2 of the preceding proof we have obtained the following inequality, which we state for later use as a separate lemma.

\begin{lem}\label{thomas2}
Let $F_d(w)$ be given by weights $w(k)$ satisfying $\sum\limits_{k\in \Z^d} w(k)^{-2}<\infty$. 
Then we have for all $n\in\N$
$$
a_n(I_d:F_d(w)\to \ca(\mathbb{T}^d))\le 
\Big(\sum_{j=n}^\infty \sigma_j^2\Big)^{1/2}\,,
$$
where $(\sigma_j)_{j\in\mathbb{N}}$ is the non-increasing rearrangement of $(1/w(k))_{k\in\mathbb{Z}^d}$.
\end{lem}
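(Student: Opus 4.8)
The plan is to obtain the bound by factorizing the embedding $I_d$ through sequence spaces and then reducing everything to the approximation numbers of a single diagonal operator, for which an explicit formula is available. This is exactly the content of Step 2 in the proof of Theorem \ref{Theorem 1}, so the cleanest route is to isolate that part of the argument and restate it as a free-standing estimate.

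Concretely, I would introduce three operators and a commutative diagram linking them. First, the map $A:F_d(w)\to\ell_2(\Z^d)$ defined by $Af=(w(k)\widehat f(k))_{k\in\Z^d}$; by the very definition (\ref{ws-06}) of the norm on $F_d(w)$ this is an isometry, so $\|A\|=1$. Second, the diagonal operator $D:\ell_2(\Z^d)\to\ell_1(\Z^d)$, $D\xi=(\xi_k/w(k))_{k\in\Z^d}$, which does land in $\ell_1$ thanks to the Cauchy--Schwarz inequality together with the hypothesis $\sum_k w(k)^{-2}<\infty$. Third, the synthesis operator $B:\ell_1(\Z^d)\to\ca(\T^d)$, $(B\xi)(x)=\sum_k\xi_k e^{ikx}$, whose defining series converges absolutely and which has norm one. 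Composing them, $BDA$ sends $f$ to $\sum_k\widehat f(k)e^{ikx}=f$, so that $I_d=BDA$.

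With the factorization in hand, the multiplicativity (ideal property) of approximation numbers yields
$$
a_n(I_d:F_d(w)\to\ca(\T^d))\le\|A\|\,a_n(D)\,\|B\|=a_n(D:\ell_2(\Z^d)\to\ell_1(\Z^d)).
$$
Since reindexing is an isometric isomorphism of both $\ell_2$ and $\ell_1$, the approximation numbers of a diagonal operator are unchanged when its entries are permuted; hence $a_n(D)=a_n(D_\sigma:\ell_2\to\ell_1)$, where $D_\sigma$ carries the non-increasing rearrangement $(\sigma_j)_{j\in\N}$ of $(1/w(k))_{k\in\Z^d}$, the one-to-one correspondence between $\Z^d$ and $\N$ being supplied by the rearrangement itself.

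The remaining, and only genuinely nontrivial, input is the exact value $a_n(D_\sigma:\ell_2\to\ell_1)=\big(\sum_{j=n}^\infty\sigma_j^2\big)^{1/2}$, which I would take from \cite[Theorem 11.11.4]{P1} rather than reprove; this is the step I expect to be the main obstacle, and the reason it is cited rather than derived. Everything else is routine bookkeeping about the factorization. The one point needing a little care is the boundedness of $B$, namely that the sequences actually fed into it lie in $\ell_1(\Z^d)$; this is automatic along the factorization, since the image of $DA$ is contained in $\ell_1(\Z^d)$ by construction.
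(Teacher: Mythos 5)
Your argument is correct and coincides with the paper's own proof, which is precisely Step~2 of the proof of Theorem~\ref{Theorem 1} (the factorization $I_d=BDA$ through $\ell_2(\Z^d)$ and $\ell_1(\Z^d)$, the rearrangement reduction to $D_\sigma$, and the citation of \cite[Theorem 11.11.4]{P1} for $a_n(D_\sigma:\ell_2\to\ell_1)$). No gaps; the lemma is indeed just that step isolated and restated.
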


Next we show a lower estimate for these approximation numbers. Here we allow a greater generality.
Recall that $s_j (T)$ denotes the $j$-th singular number of a compact operator $T$ between Hilbert spaces.

\begin{lem}\label{thomas1}
Let $H$ be a Hilbert space, let $(\Omega,\Sigma,\nu)$ be a finite measure space, and $id_\nu:L_\infty(\nu)\to L_2(\nu)$ 
the formal identity. Then one has, for every bounded linear operator $T:H\to L_\infty(\nu)$ and all $n\in\mathbb{N}$, the 
estimate
$$
a_n(T)\ge \frac{1}{\sqrt{\nu(\Omega)}} \, \Big(\sum_{j=n}^\infty s_j(id_\nu\, T)^2\Big)^{1/2}\,.
$$
\end{lem}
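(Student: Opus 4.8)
The plan is to establish the equivalent inequality
$$
\Big(\sum_{j=n}^\infty s_j(id_\nu\, T)^2\Big)^{1/2}\le \sqrt{\nu(\Omega)}\,a_n(T)
$$
by bounding the tail of the singular numbers of the Hilbert space operator $id_\nu\, T:H\to L_2(\nu)$ against the $2$-summing norm of a well-chosen perturbation of $id_\nu\, T$. First I would fix $n\in\N$ and an arbitrary $A:H\to L_\infty(\nu)$ with $\rank A<n$, and set $B:=id_\nu\, A$, so that $\rank B<n$ and $id_\nu\, T-B=id_\nu(T-A)$. Since $H$ and $L_2(\nu)$ are Hilbert spaces, the $2$-summing norm of an operator between them coincides with its Hilbert-Schmidt norm, i.e. the $\ell_2$-norm of its singular numbers, and moreover $a_j=s_j$ for such operators. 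This identity is the bridge that converts a tail sum of singular numbers into a $\pi_2$-quantity.

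The key step is the estimate
$$
\Big(\sum_{j=n}^\infty s_j(id_\nu\, T)^2\Big)^{1/2}\le \pi_2\big(id_\nu(T-A)\big).
$$
To prove it I would apply the additivity property of $s$-numbers, $s_{i+n-1}(u+v)\le s_i(u)+s_n(v)$, to $u=id_\nu\, T-B$ and $v=B$. As $\rank B<n$ forces $s_n(B)=0$, this gives $s_{i+n-1}(id_\nu\, T)\le s_i\big(id_\nu(T-A)\big)$ for every $i\in\N$. Squaring and summing over $i\ge 1$, and reindexing via $j=i+n-1\ge n$, yields
$$
\sum_{j=n}^\infty s_j(id_\nu\, T)^2\le \sum_{i=1}^\infty s_i\big(id_\nu(T-A)\big)^2=\pi_2\big(id_\nu(T-A)\big)^2,
$$
where the last equality again uses that $\pi_2$ equals the Hilbert-Schmidt norm.

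It then remains to estimate $\pi_2(id_\nu(T-A))$ from above. By the ideal property of the $2$-summing norm, $\pi_2(id_\nu(T-A))\le \pi_2(id_\nu)\,\|T-A\|$, so I would next show that the formal identity satisfies $\pi_2\big(id_\nu:L_\infty(\nu)\to L_2(\nu)\big)\le \sqrt{\nu(\Omega)}$. For finitely many $f_1,\dots,f_m\in L_\infty(\nu)$ one has $\sum_j\|f_j|L_2(\nu)\|^2=\int_\Omega\sum_j|f_j(x)|^2\,d\nu(x)$, and for almost every $x$ the point evaluation acts as a norm-one functional on $L_\infty(\nu)$, whence $\sum_j|f_j(x)|^2\le \sup_{\|x'\|\le 1}\sum_j|\langle f_j,x'\rangle|^2$; integrating over $\Omega$ produces exactly the factor $\nu(\Omega)$. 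Combining the three estimates gives $\big(\sum_{j\ge n}s_j(id_\nu T)^2\big)^{1/2}\le\sqrt{\nu(\Omega)}\,\|T-A\|$, and taking the infimum over all $A$ with $\rank A<n$ turns $\|T-A\|$ into $a_n(T)$, which is the claimed bound after dividing by $\sqrt{\nu(\Omega)}$.

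The main obstacle I anticipate is the clean justification of $\pi_2(id_\nu)\le\sqrt{\nu(\Omega)}$: the argument through point evaluations needs some care, since $(L_\infty(\nu))'$ is not a space of functions and $f\mapsto f(x)$ is a priori not defined on equivalence classes. One should therefore either invoke the Pietsch factorization for operators defined on an $L_\infty$- (or $C(K)$-) space, or pass to a measurable lifting to make the evaluation functionals legitimate. Everything else—the additivity of $s$-numbers, the identity $\pi_2=\,$Hilbert-Schmidt norm, and the ideal property of $\pi_2$—is standard and can be quoted from Pietsch.
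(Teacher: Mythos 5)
Your argument is correct and follows essentially the same route as the paper: additivity of singular numbers to kill the rank-$<n$ perturbation, the identification of $\pi_2$ with the Hilbert--Schmidt norm to convert the tail sum, the ideal property, and $\pi_2(id_\nu)=\sqrt{\nu(\Omega)}$ (which the paper simply cites from Diestel--Jarchow--Tonge rather than reproving via point evaluations). No gaps; the technical caveat you raise about evaluation functionals is handled in the paper by quoting the known value of $\pi_2(id_\nu)$.
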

\begin{pf}
Let $A:H\to L_\infty(\nu)$ be an arbitrary operator with $\rank A<n$. By the additivity of singular 
numbers we get for all $j\ge n$ the inequality 
$$
s_j(id_\nu \, T)\le s_n(id_\nu\, A)+s_{j+1-n}(id_\nu \, (T-A))=s_{j+1-n}(id_\nu \, (T-A))\,,
$$
where we took into account that $s_n(id_\nu\, A)=0$. Changing the running index $j\ge n$ to $k=j+1-n\ge 1$, this implies
\begin{align*}
\sum_{j=n}^\infty s_j(id_\nu\, T)^2 &\le
\sum_{k=1}^\infty s_k(id_\nu\, (T-A))^2
=\pi_2(id_\nu\, (T-A))^2\\
&\le \pi_2(id_\nu)^2\cdot\Vert T-A\Vert^2 = \nu(\Omega)\cdot \Vert T-A\Vert^2\,.
\end{align*}
Here we also used the well-known facts that 
$$\pi_2(S)^2=\sum_{k=1}^\infty s_k(S)^2$$ for compact operators $S$ acting between Hilbert spaces, see \cite[Theorem~4.10]{DJT},
and $$\pi_2(id_\nu:L_\infty(\nu)\to L_2(\nu))=\sqrt{\nu(\Omega)}\,,$$
see \cite[Example (d),p.~40]{DJT}.
Passing to the infimum over all operators $A$ of $\rank A<n$, we arrive at the desired inequality
$$
a_n(T)
\ge \frac{1}{\sqrt{\nu(\Omega)}}\Big(\sum_{j=n}^\infty s_j(id_\nu\, T)^2\Big)^{1/2}\,.
$$
\qed\end{pf}
The arguments in the above proof are similar to the ones used by Osipenko and Parfenov in \cite{OP}. 
We thank Heping Wang for pointing out this paper to us. 

As an immediate consequence of the preceding two lemmata 
we obtain the following result.

\begin{thm}\label{Theorem 2}
Let $F_d(w)$ be given by weights satisfying $\sum\limits_{k\in \Z^d} w(k)^{-2}<\infty$, and let
$(\sigma_j)_{j\in\mathbb{N}}$ denote the non-increasing rearrangement of $(1/w(k))_{k\in\Z^d}$.
Moreover, let 
$$
G_d\quad = \quad \ca(\mathbb{T}^d)\quad\text{or}\quad C(\mathbb{T}^d)\quad\text{or}\quad L_\infty(\mathbb{T}^d)\,.
$$
Then one has for all $n\in\mathbb{N}$ 
\begin{equation}\label{2-infty}
a_n(I_d: F_d(w)\to G_d)= \Big(\sum_{j=n}^\infty \sigma_j^2\Big)^{1/2}\,.
\end{equation}
\end{thm}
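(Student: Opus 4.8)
The plan is to establish the identity (\ref{2-infty}) by a squeezing argument: I would bound $a_n(I_d : F_d(w) \to G_d)$ from above by the quantity from Lemma \ref{thomas2} and from below by the quantity from Lemma \ref{thomas1}, and use the chain of norm-one embeddings $\ca(\T^d) \hookrightarrow C(\T^d) \hookrightarrow L_\infty(\T^d)$ to treat all three choices of $G_d$ at once. First I would record the monotonicity of the approximation numbers under these embeddings. Since $\|f|L_\infty(\T^d)\| \le \|f|C(\T^d)\| \le \|f|\ca(\T^d)\|$, composing $I_d$ on the left with the corresponding inclusions of norm at most one and invoking the ideal property $a_n(j\circ I_d)\le\|j\|\,a_n(I_d)$ yields
\[
a_n(I_d : F_d(w) \to L_\infty(\T^d)) \le a_n(I_d : F_d(w) \to C(\T^d)) \le a_n(I_d : F_d(w) \to \ca(\T^d))\,.
\]
By Lemma \ref{thomas2}, the rightmost term is at most $\big(\sum_{j=n}^\infty \sigma_j^2\big)^{1/2}$, so this serves as a common upper bound for all three targets.

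For the lower bound I would apply Lemma \ref{thomas1} with $H=F_d(w)$, the measure space $(\T^d,(2\pi)^{-d}dx)$, and $T=I_d:F_d(w)\to L_\infty(\T^d)$. Because the Lebesgue measure is normalized, $\nu(\T^d)=1$, so the prefactor $1/\sqrt{\nu(\Omega)}$ equals $1$ and disappears. It then remains to identify the singular numbers of $id_\nu\,I_d:F_d(w)\to L_2(\T^d)$, which is exactly the canonical embedding; this is the step I expect to require the most care. Using the complete orthonormal system $\{\varphi_k = e^{ikx}/w(k)\}$ of $F_d(w)$ already introduced in the proof of Theorem \ref{Theorem 1}, one checks that $id_\nu\,I_d$ sends $\varphi_k$ to $e^{ikx}/w(k)\in L_2(\T^d)$, and since $\{e^{ikx}\}$ is orthonormal in $L_2(\T^d)$ the images are mutually orthogonal with $\|id_\nu\,I_d\,\varphi_k|L_2(\T^d)\|=1/w(k)$. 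Hence $id_\nu\,I_d$ is diagonal with respect to these orthonormal systems, and its singular numbers are precisely the non-increasing rearrangement $(\sigma_j)$ of $(1/w(k))_{k\in\Z^d}$. Lemma \ref{thomas1} therefore gives
\[
a_n(I_d : F_d(w) \to L_\infty(\T^d)) \ge \Big(\sum_{j=n}^\infty s_j(id_\nu\,I_d)^2\Big)^{1/2} = \Big(\sum_{j=n}^\infty \sigma_j^2\Big)^{1/2}\,.
\]

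Finally I would combine the two estimates. Since $L_\infty(\T^d)$ is the largest of the three target spaces, the lower bound just obtained for it, together with the monotonicity chain above, sandwiches every $a_n(I_d : F_d(w)\to G_d)$ between $\big(\sum_{j=n}^\infty\sigma_j^2\big)^{1/2}$ and itself, forcing equality in (\ref{2-infty}) for each $G_d\in\{\ca(\T^d),C(\T^d),L_\infty(\T^d)\}$. The conceptual heart of the argument is twofold: the normalized measure, which annihilates the constant and makes the two bounds match exactly, and the diagonal structure of the $L_2$-embedding, which pins down its singular numbers as the rearranged weights; everything else reduces to bookkeeping with the ideal properties of the approximation numbers.
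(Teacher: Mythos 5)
Your proposal is correct and follows essentially the same route as the paper: the upper bound comes from Lemma \ref{thomas2} combined with the norm-one embeddings $\ca(\mathbb{T}^d)\hookrightarrow C(\mathbb{T}^d)\hookrightarrow L_\infty(\mathbb{T}^d)$ and the ideal property, while the lower bound comes from Lemma \ref{thomas1} with the normalized measure giving $\nu(\mathbb{T}^d)=1$. The only difference is that you spell out explicitly why $s_j(id_\nu\,I_d)=\sigma_j$ via the orthonormal system $\varphi_k=e^{ikx}/w(k)$, a step the paper leaves implicit; this is a welcome clarification, not a deviation.
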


\begin{pf}
Recall that the torus $\mathbb{T}^d$ is equipped with the \emph{normalized} Lebesgue measure.
In view of the norm one embeddings
$$
\ca (\mathbb{T}^d)\hookrightarrow C(\mathbb{T}^d)\hookrightarrow L_\infty(\mathbb{T}^d)
$$
and the multiplicativity of the approximation numbers, we get from Lemma \ref{thomas1} and Lemma \ref{thomas2}  
the following chain of inequalities
\begin{align*}
\Big(\sum_{j=n}^\infty \sigma_j^2\Big)^{1/2}&\le\, a_n(I_d: F_d(w)\to L_\infty(\mathbb{T}^d))\\
&\le\, a_n(I_d: F_d(w)\to C(\mathbb{T}^d))\\
&\le\, a_n(I_d: F_d(w) \to \ca(\mathbb{T}^d))\le\Big(\sum_{j=n}^\infty \sigma_j^2\Big)^{1/2}\,.
\end{align*}
The proof is finished. 

\qed\end{pf}

\begin{rmk}
Since $\sigma_j=a_j(I_d: F_d(w)\to L_2(\mathbb{T}^d))$, equation (\ref{2-infty})
gives the nice formula
$$
a_n(I_d: F_d(w)\to L_\infty(\T^d))=
\Big(\sum_{j=n}^\infty a_j(I_d: F_d(w)\to L_2(\T^d))^2\Big)^{1/2}\,.
$$
The same is true if we replace $L_\infty(\T^d)$ with $C(\T^d)$ or $\ca(\T^d)$.
\end{rmk}

\section{Applications}\label{Section 4}


In this section we apply our general result Theorem \ref{Theorem 2} to various Sobolev  and Besov spaces. 


\subsection{Embeddings of isotropic Sobolev spaces}\label{Section 4.1}


First we consider the isotropic Sobolev spaces $H^{s,r}(\mathbb{T}^d)$, introduced in Section 2, 
where  $s>0$ and $0<r \leq\infty$. Taking the weights
\[
w(k):=w_{s,r}(k)=\big(1+\sum_{j=1} ^d |k_j|^r \big)^{s/r}\,,
\]
we get $F_d(w)= H^{s,r}(\mathbb{T}^d)$. 
Note that, for fixed $d\in \mathbb{N}$ and $s>0$, all weights $w_{s,r}$ with $0<r\le\infty$ are equivalent.

For the non-increasing rearrangement $(\sigma_n)_{n\in \mathbb{N}}$ of $(1/w_{s,r}(k))_{k\in \mathbb{Z}^d}$ we have
$\sigma_n=a_n(I_d:H^{s,r}(\mathbb{T}^d) \to L_2(\mathbb{T}^d))$.
It is a classical fact that 
\[
H^{s,r}(\mathbb{T}^d) \hookrightarrow C(\mathbb{T}^d) 
\qquad \Longleftrightarrow \qquad s>\frac d2\, .
\]
However, it can be checked directly that 
\begin{equation} \label{F4}
\sum_{k\in \mathbb{Z}^d} \frac{1}{w_{s,r}(k)^2} < \infty 
\qquad \Longleftrightarrow \qquad s>\frac{d}{2} \,.
\end{equation}
By Theorem \ref{Theorem 1} we conclude that the condition $s>d/2$ is necessary and sufficient for the 
existence of an embedding of $H^{s,r}(\mathbb{T}^d)$ into  $\ca (\mathbb{T}^d)$ (or $C (\mathbb{T}^d)$ or
$L_\infty (\mathbb{T}^d)$\,). 
The following result was shown in \cite[Theorems 4.3, 4.11 and 4.14]{KSU1} for the special values $r=1,2,2s$, 
but the proof works for all $0<r\le \infty$.

\begin{prop}\label{Prop 1}
Fix $d\in \mathbb{N}$ and $s>0$. Then, for all $0<r\le \infty$, it holds
\[
\lim_{n \rightarrow \infty} n^{s/d}\, a_n(I_d : H^{s,r}(\mathbb{T}^d) \to L_2 (\mathbb{T}^d))
= \mbox{\rm vol}(B^d_r )^{s/d}\, .
\]
Here $B^d_r$ is the unit ball in $\mathbb{R}^d$ with respect to the (quasi)-norm $|\cdot|_r$. 
\end{prop}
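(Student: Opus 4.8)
The plan is to work directly with the singular numbers $\sigma_n = a_n(I_d: H^{s,r}(\mathbb{T}^d)\to L_2(\mathbb{T}^d))$, which, as recorded just before the statement, form the non-increasing rearrangement of the sequence $(1/w_{s,r}(k))_{k\in\mathbb{Z}^d}$. Since the asymptotics of a non-increasing rearrangement are governed by the distribution of the underlying values, I would reduce everything to counting how many weights $1/w_{s,r}(k)$ exceed a given threshold $t>0$. Writing $v_k = 1/w_{s,r}(k)$ and $M(t) = \#\{k\in\mathbb{Z}^d: v_k > t\}$, the elementary chain
\[
v_k > t \;\Longleftrightarrow\; (1+|k|_r^r)^{s/r} < t^{-1} \;\Longleftrightarrow\; |k|_r < (t^{-r/s}-1)^{1/r} =: R(t)
\]
shows that $M(t)$ is exactly the number of lattice points inside the $|\cdot|_r$-ball of radius $R(t)$, and one checks that $R(t)\sim t^{-1/s}$ as $t\to0^+$ for every $0<r\le\infty$.

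The analytic heart of the argument is the classical lattice-point asymptotic: for any bounded, Jordan-measurable region $\Omega\subset\mathbb{R}^d$ one has $\#(\mathbb{Z}^d\cap R\Omega)\sim \mathrm{vol}(\Omega)\,R^d$ as $R\to\infty$. The unit ball $B^d_r$ qualifies, since its boundary $\{|x|_r=1\}$ is a Lebesgue-null set for all $0<r\le\infty$ (including the non-convex range $0<r<1$ and the cube $r=\infty$). Applying this with $\Omega=B^d_r$ and combining with $R(t)\sim t^{-1/s}$ yields the key estimate
\[
M(t)\sim \mathrm{vol}(B^d_r)\,t^{-d/s}\qquad(t\to0^+)\,.
\]

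It then remains to invert this distributional asymptotic into an asymptotic for $\sigma_n$ itself. Here I would use the standard duality between the rearrangement and the counting function: for each $n$ one has $M(\sigma_n)<n$, while $M(t)\ge n$ for every $t<\sigma_n$. Feeding the two-sided bound
\[
(\mathrm{vol}(B^d_r)-\varepsilon)\,t^{-d/s}\le M(t)\le (\mathrm{vol}(B^d_r)+\varepsilon)\,t^{-d/s}\,,
\]
valid for all small $t$, into these two relations pins $n^{s/d}\sigma_n$ between $(\mathrm{vol}(B^d_r)-\varepsilon)^{s/d}$ and $(\mathrm{vol}(B^d_r)+\varepsilon)^{s/d}$ for all large $n$; letting $\varepsilon\to0$ gives the claimed limit $\mathrm{vol}(B^d_r)^{s/d}$.

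I expect the only genuine obstacle to be obtaining the lattice-point count with the sharp leading constant $\mathrm{vol}(B^d_r)$ uniformly across the full parameter range $0<r\le\infty$: the convex cases $1\le r<\infty$ follow from Gauss's argument, but the non-convex quasi-balls ($0<r<1$) and the boundary behaviour at $r=\infty$ require one to invoke Jordan-measurability rather than convexity. The reduction to counting and the inversion step are routine, and the perturbation $R(t)=(t^{-r/s}-1)^{1/r}$ versus $t^{-1/s}$ affects only lower-order terms and does not disturb the leading asymptotic.
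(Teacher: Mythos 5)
Your argument is correct: the reduction to the counting function $M(t)=\#\{k: 1/w_{s,r}(k)>t\}$, the lattice-point asymptotic $\#(\mathbb{Z}^d\cap R\,B^d_r)\sim \mathrm{vol}(B^d_r)R^d$ for the Jordan-measurable (quasi-)balls, and the inversion from $M(t)$ back to the rearrangement $\sigma_n$ all go through and yield the stated limit for every $0<r\le\infty$. The paper gives no proof of its own here but delegates to [KSU1], where the argument for $r=1,2,2s$ rests on exactly this reduction to counting lattice points in dilated $\ell_r$-balls, so your proposal is essentially a self-contained reconstruction of the same approach.
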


\begin{rmk}
\label{Remark 1} 
\rm
(i) One can rephrase Proposition \ref{Prop 1} as a strong equivalence
$$
a_n(I_d : H^{s,r}(\mathbb{T}^d) \to L_2 (\mathbb{T}^d))
\asymp \mbox{\rm vol}(B^d_r )^{s/d}\, n^{-s/d}.
$$ 
The importance of this result is that it provides asymptotically optimal 
constants, for arbitrary fixed $d,s,r$.
\\
(ii)
We comment on the influence of the parameter $r$. The formula 
\[
\mbox{vol}(B_{r}^d):= \mbox{vol}\Big\{x \in \mathbb{R}^d: \: \sum_{j=1}^d |x_j|^{r}\le 1 \Big\} = 2^d\,
\frac{\Gamma (1+1/r)^d}{\Gamma (1+d/r)}\, ,
\]
for the volume of the unit ball $B_{r}^d$  is well-known, see, e.g.,  Wang \cite{Wa}, where 
$$
    \Gamma(1+x) = \int_{0}^{\infty} t^x e^{-t}\,dt\quad,\quad x>0,
$$
denotes the Gamma function. As shown in \cite{KSU1}, for all $x>0$ one has
$$
\left(\frac x e\right)^x \le \Gamma(1+x) \le (x+1)^x\,,
$$
which implies the two-sided estimate
\[
\frac{2^s}{\big(e(d+r)\big)^{s/r}} \leq \mbox{\rm vol}(B^d_{r})^{s/d} 
\leq \frac{2^s\big(e(r+1)\big)^{s/r}}{d^{s/r}}\,.
\]
Hence, for fixed $s$ and $r$, it holds
\[
\mbox{\rm vol}(B^d_r )^{s/d} \thicksim d^{-s/r}\,  \quad\text{as } d \to \infty \, .
\]
\end{rmk}

\begin{thm}\label{Theorem 3}
Let $d\in \mathbb{N}$ , $s>d/2$ and $0<r\le\infty$. Then
\begin{equation}\label{ws-02}
\lim_{n \rightarrow \infty} n^{s/d-1/2}\, a_n(I_d : H^{s,r}(\mathbb{T}^d) \to L_\infty (\mathbb{T}^d))
= \sqrt{\frac{d}{2s-d}} \, \mbox{\rm vol}(B^d_r )^{s/d} \, .
\end{equation}
\end{thm}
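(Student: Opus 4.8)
The plan is to combine the exact formula of Theorem \ref{Theorem 2} with the sharp $L_2$-asymptotics of Proposition \ref{Prop 1}, and then to analyse the resulting tail sum. Applying Theorem \ref{Theorem 2} with $G_d=L_\infty(\T^d)$ and the weights $w_{s,r}$ (which is legitimate since $s>d/2$ guarantees $\sum_k w_{s,r}(k)^{-2}<\infty$ by (\ref{F4})), I obtain the identity $a_n(I_d:H^{s,r}(\T^d)\to L_\infty(\T^d))=\big(\sum_{j=n}^\infty\sigma_j^2\big)^{1/2}$, where $(\sigma_j)$ is the non-increasing rearrangement of $(1/w_{s,r}(k))_{k\in\Z^d}$ and, as noted before Proposition \ref{Prop 1}, $\sigma_j=a_j(I_d:H^{s,r}(\T^d)\to L_2(\T^d))$. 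By Proposition \ref{Prop 1}, restated as a strong equivalence in Remark \ref{Remark 1}(i), one has $\sigma_j\asymp C\,j^{-s/d}$ with $C=\mbox{\rm vol}(B^d_r)^{s/d}$; that is, $\lim_{j\to\infty}\sigma_j\,j^{s/d}=C$. The whole problem is thus reduced to determining the precise asymptotic behaviour of $\sum_{j=n}^\infty\sigma_j^2$ as $n\to\infty$.

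The key step is to transfer the termwise asymptotics $\sigma_j^2\asymp C^2 j^{-2s/d}$ into an asymptotics for the tails. For this I would use the elementary squeeze principle: if $a_j,b_j>0$ with $\lim_{j\to\infty}a_j/b_j=1$ and $\sum_j b_j<\infty$, then $\lim_{n\to\infty}\big(\sum_{j=n}^\infty a_j\big)/\big(\sum_{j=n}^\infty b_j\big)=1$. Indeed, given $\varepsilon>0$ choose $N$ with $(1-\varepsilon)b_j\le a_j\le(1+\varepsilon)b_j$ for $j\ge N$; summing over $j\ge n\ge N$ and then letting $\varepsilon\to 0$ yields the claim. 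Taking $a_j=\sigma_j^2$ and $b_j=C^2 j^{-2s/d}$, where the series converges because $2s/d>1$, this gives $\sum_{j=n}^\infty\sigma_j^2\asymp C^2\sum_{j=n}^\infty j^{-2s/d}$.

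It then remains to evaluate the tail of $\sum_{j=n}^\infty j^{-\alpha}$ with $\alpha=2s/d>1$, which I would do by the standard integral comparison $\int_n^\infty x^{-\alpha}\,dx\le\sum_{j=n}^\infty j^{-\alpha}\le\int_{n-1}^\infty x^{-\alpha}\,dx$, i.e. $\frac{n^{1-\alpha}}{\alpha-1}\le\sum_{j=n}^\infty j^{-\alpha}\le\frac{(n-1)^{1-\alpha}}{\alpha-1}$. Since $(n-1)^{1-\alpha}/n^{1-\alpha}\to 1$, a final squeeze gives $\sum_{j=n}^\infty j^{-\alpha}\asymp\frac{1}{\alpha-1}\,n^{1-\alpha}$. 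With $\alpha=2s/d$ one has $\frac{1}{\alpha-1}=\frac{d}{2s-d}$ and $1-\alpha=1-2s/d$, so combining the last three relations yields $\sum_{j=n}^\infty\sigma_j^2\asymp C^2\,\frac{d}{2s-d}\,n^{1-2s/d}$. Taking square roots and multiplying by $n^{s/d-1/2}$ then produces $n^{s/d-1/2}\,a_n(I_d:H^{s,r}(\T^d)\to L_\infty(\T^d))\to C\sqrt{\frac{d}{2s-d}}=\sqrt{\frac{d}{2s-d}}\,\mbox{\rm vol}(B^d_r)^{s/d}$, which is the asserted identity.

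The main obstacle is the second step: one must ensure that the ratio-one asymptotics of the individual terms survives the tail summation, and this is precisely where the strong equivalence $\asymp$ (multiplicative ratio tending to $1$) is indispensable. The weak equivalence $\thicksim$ alone would recover only the correct order $n^{s/d-1/2}$, but not the sharp constant $\sqrt{d/(2s-d)}\,\mbox{\rm vol}(B^d_r)^{s/d}$, so the whole sharpness of the statement hinges on the squeeze argument being carried out with the multiplicative, rather than merely two-sided, comparison.
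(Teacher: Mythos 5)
Your proof is correct and follows essentially the same route as the paper: both reduce the problem via Theorem \ref{Theorem 2} to the tail sum $\sum_{j\ge n}\sigma_j^2$, bracket $\sigma_j$ using the strong equivalence from Proposition \ref{Prop 1}, and finish with the integral comparison $\int_n^\infty x^{-2s/d}\,dx\le\sum_{j\ge n}j^{-2s/d}\le\int_{n-1}^\infty x^{-2s/d}\,dx$. The only cosmetic difference is that you package the $\varepsilon$-bracketing as a standalone tail-comparison lemma, whereas the paper carries out the same squeeze inline.
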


\begin{pf}
By Theorem \ref{Theorem 2} we have 
$$
n^{s/d-1/2} a_n(I_d : H^{s,r}(\mathbb{T}^d) \to L_\infty (\mathbb{T}^d)) 
=  n^{s/d-1/2} \Big(\sum_{j=n}^\infty \sigma_j^2\Big)^{1/2}\,,
$$
where $\sigma_j=a_j(I_d : H^{s,r}(\mathbb{T}^d) \to L_2(\mathbb{T}^d))$.
From Proposition \ref{Prop 1} we know that for any $\varepsilon >0 $ there exists a natural number $n_0(\varepsilon)$
such that 
$$
|n^{s/d} \sigma_n - \mbox{\rm vol}(B^d_r )^{s/d}|
< \varepsilon \qquad \mbox{for all}\quad n \ge n_0 (\varepsilon).
$$
This yields, if $n\ge n_0(\varepsilon)$,
$$
\big(\mbox{\rm vol}(B^d_r )^{s/d} - \varepsilon\big)^2 \sum_{j=n}^\infty j^{-2s/d}
\le \sum_{j=n}^\infty \sigma_j^2
 \le  \big(\mbox{\rm vol}(B^d_r )^{s/d} + \varepsilon\big)^2 \sum_{j=n}^\infty j^{-2s/d}\,.
$$
 Comparing the series with an integral, 
 \[
\frac{n^{1-2s/d}}{2s/d -1} = \int_{n}^\infty \frac{dx}{x^{2s/d}}  \le  
\sum_{j=n}^\infty j^{-2s/d} \le 
\int_{n-1}^\infty \frac{dx}{x^{2s/d}}= \frac{(n-1)^{1-2s/d}}{2s/d -1}\,,
 \]
 we obtain 
\begin{align*}
\big(\mbox{\rm vol}(B^d_r )^{s/d} - \varepsilon\big)^2 \, \frac{d}{2s-d} & \le
n^{{2s/d-1}}\, a_n(I_d : H^{s,r}(\mathbb{T}^d) \to L_\infty (\mathbb{T}^d))^2\\
& \le \big(\mbox{\rm vol}(B^d_r )^{s/d} + \varepsilon\big)^2 \, \frac{d}{2s-d}
  \left(\frac{n}{n-1}\right)^{2s/d -1} \, .
\end{align*}
Since this is true for all $\varepsilon>0$ and sufficiently large $n$, the claim follows 
by letting first $n \to \infty$ and then $\varepsilon\to 0$.
\qed
\end{pf}

\begin{rmk}\label{Remark 2}
\rm
(i)  Rephrasing Theorem \ref{Theorem 3} in terms of the strong equivalences
\begin{align*}
a_n(I_d : H^{s,r}(\mathbb{T}^d) \to L_\infty (\mathbb{T}^d))
&\asymp n^{1/2-s/d}\, \sqrt{\frac{d}{2s-d}} \, \mbox{\rm vol}(B^d_r )^{s/d}\\
 &\asymp n^{1/2}\, \sqrt{\frac{d}{2s-d}} \, a_n(I_d : H^{s,r}(\mathbb{T}^d) \to L_2(\mathbb{T}^d))
\end{align*}
we see that, compared to $L_2$-approximation, the rate of $L_\infty$-approximation is worse by a factor $n^{1/2}$. 
Moreover, for the constant we need the correction factor $\sqrt{\frac{d}{2s-d}}$\,.\\
(ii)
Of course, the asymptotic behavior $a_n(I_d : H^{s,r}(\mathbb{T}^d) \to L_\infty (\mathbb{T}^d))\sim n^{1/2-s/d}$
of the approximation numbers is known since some time, see e.g. the monograph 
by Temlyakov \cite[Theorem~1.4.2, Theorem~2.4.2]{Tem}, but only in the sense of weak equivalence, without explicit constants.
At the end of that book, some historic remarks on the periodic case can be found. For the non-periodic case 
we refer to Edmunds and Triebel \cite{ET} and to Vyb{\'\i}ral \cite{Vy}.
The novelty of Theorem \ref{Theorem 3} is that it gives \emph{strong equivalence} and provides
exact information about the asymptotically optimal constants and their dependence on $d,s$ and $r$.
\\
(iii) Theorem \ref{Theorem 2} yields that relation \eqref{ws-02} remains true if we replace 
$L_\infty (\mathbb{T}^d)$ with $\ca (\mathbb{T}^d)$ or $C(\mathbb{T}^d)$. 
\end{rmk}

After determining  the asymptotic behaviour of  $a_n(I_d : H^{s,r}(\mathbb{T}^d) \to L_\infty (\mathbb{T}^d))$, we study next 
single estimates between $a_n (I_d)$ and $n^{1/2-s/d}$. We need some preparation.

\begin{prop}\label{Corollary}
Assume that $\sigma_n:=a_n (I_d:F_d(w) \to  L_2 (\mathbb{T}^d))$ satisfies
\[
A n^{-\alpha}(\log n)^\beta \leq \sigma_n \leq Bn^{-\alpha}(\log n)^{\beta}\quad \text{for all} \quad n \geq N.
\]
for some $\alpha > 1/2$, $\beta\ge 0$, $0<A \le B < \infty$ and $N\in \mathbb{N}$\,.
Then it follows that
    \begin{enumerate}
    \item[(i)]  $a_{n+1}(I_d:F_d(w) \to  L_\infty (\mathbb{T}^d)) \leq
        B \, \sqrt{\frac{2}{2\alpha -1}} \, n^{1/2 - \alpha} (\log n)^\beta$ 
				
				\hspace{6cm} for all $n\geq \max(N, e^{4\beta /(2\alpha -1)})$
				
 and

 \item[(ii)] $a_n (I_d:F_d(w) \to  L_\infty (\mathbb{T}^d))\geq A \, \sqrt{\frac{1}{4\alpha -2}} \, n^{1/2 - \alpha} (\log 2n)^\beta$ 
    for all  $n\geq N \ge 2$.

    \end{enumerate}

\end{prop}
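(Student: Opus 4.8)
The plan is to feed the two-sided hypothesis on $\sigma_j$ into the identity of Theorem~\ref{Theorem 2}, which gives
$$
a_m(I_d:F_d(w)\to L_\infty(\T^d))^2=\sum_{j=m}^\infty\sigma_j^2\,,
$$
and then to estimate the tail of the series $\sum_j\varphi(j)$ with $\varphi(x):=x^{-2\alpha}(\log x)^{2\beta}$ by comparison with the integral of $\varphi$. I will use two elementary facts: first, that $\varphi$ has its only critical point at $x=e^{\beta/\alpha}$, increasing before and decreasing after it; and second, the primitive relation obtained by one integration by parts,
$$
\int_n^\infty\varphi(x)\,dx=\frac{n^{1-2\alpha}(\log n)^{2\beta}}{2\alpha-1}+\frac{2\beta}{2\alpha-1}\int_n^\infty x^{-2\alpha}(\log x)^{2\beta-1}\,dx\,.
$$

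For part (i) I would argue as follows. Since $\frac{4}{2\alpha-1}\ge\frac1\alpha$ for every $\alpha>1/2$, the hypothesis $n\ge e^{4\beta/(2\alpha-1)}$ forces $n\ge e^{\beta/\alpha}$, so $\varphi$ is decreasing on $[n,\infty)$ and therefore $\sum_{j=n+1}^\infty\varphi(j)\le\int_n^\infty\varphi(x)\,dx=:I$. In the primitive relation the remaining integral obeys $\int_n^\infty x^{-2\alpha}(\log x)^{2\beta-1}\,dx\le(\log n)^{-1}I$, because $(\log x)^{2\beta-1}\le(\log n)^{-1}(\log x)^{2\beta}$ for $x\ge n$. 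Solving the resulting inequality for $I$ and invoking $\log n\ge\frac{4\beta}{2\alpha-1}$, which makes the correction factor at most $\tfrac12$, yields $I\le\frac{2}{2\alpha-1}\,n^{1-2\alpha}(\log n)^{2\beta}$. Inserting $\sigma_j\le B\,j^{-\alpha}(\log j)^\beta$ gives $a_{n+1}^2=\sum_{j=n+1}^\infty\sigma_j^2\le B^2I$, and taking square roots produces exactly the bound in (i). This part of the argument is completely routine once the threshold $e^{4\beta/(2\alpha-1)}$ is recognized as the device that both forces monotonicity and absorbs the lower-order term.

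For part (ii) the same identity gives $a_n^2\ge A^2\sum_{j=n}^\infty\varphi(j)$, so it remains to show the tail estimate $\sum_{j=n}^\infty\varphi(j)\ge\frac{1}{2(2\alpha-1)}n^{1-2\alpha}(\log 2n)^{2\beta}$. The naive move, $\sum_{j\ge n}\varphi(j)\ge(\log n)^{2\beta}\sum_{j\ge n}j^{-2\alpha}\ge(\log n)^{2\beta}\int_n^\infty x^{-2\alpha}\,dx=\frac{n^{1-2\alpha}(\log n)^{2\beta}}{2\alpha-1}$, already yields the correct power $n^{1/2-\alpha}$ and even the better constant $\frac{1}{\sqrt{2\alpha-1}}$, but only with $(\log n)^\beta$ instead of $(\log 2n)^\beta$; this is enough whenever $(\log 2n/\log n)^\beta\le\sqrt2$, in particular for $\beta\le1/2$. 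The hard part will be to reach the stated form, namely the constant $\frac{1}{\sqrt{4\alpha-2}}$ together with the larger factor $(\log 2n)^\beta$, \emph{uniformly in} $\beta\ge0$. The obstruction is that the summand $\varphi(j)$ need not concentrate near $j=n$: for large $\beta$ its mass sits at the far index $e^{\beta/\alpha}$, so replacing $(\log j)^\beta$ by its minimal value $(\log n)^\beta$ discards too much, whereas for large $\alpha$ the whole series is governed by its first term $\varphi(n)$, whose logarithm is again only $(\log n)^\beta$; indeed one checks on examples that the plain comparison $\sum_{j\ge n}\varphi(j)\ge\int_n^\infty\varphi$ can fall short of the target, so the discreteness of the series is genuinely needed. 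My plan is therefore to retain \emph{both} contributions: the full integral $\int_n^\infty x^{-2\alpha}(\log x)^{2\beta}\,dx$, evaluated through the primitive relation above so as to keep its positive second term (which is exactly what grows when $\beta$ is large), together with the leading term $\varphi(n)$ supplied by the Euler--Maclaurin correction (which carries the estimate when $\alpha$ is large). I would organize this by a short case distinction according to whether $\varphi$ is already decreasing on $[n,\infty)$ or still increasing there, and in the regime $\alpha\le1$ by the simpler route $\sum_{j\ge n}\varphi(j)\ge(\log 2n)^{2\beta}\int_{2n}^\infty x^{-2\alpha}\,dx=\frac{2^{1-2\alpha}}{2\alpha-1}n^{1-2\alpha}(\log 2n)^{2\beta}$, where $2^{1-2\alpha}\ge\tfrac12$. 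The remaining delicate point is the bookkeeping that trades the factor $\sqrt2$ in the constant for the passage from $(\log n)^\beta$ to $(\log 2n)^\beta$ and makes it work simultaneously for all $\beta$; here the assumption $n\ge N\ge2$ enters precisely to guarantee $\log 2n\le 2\log n$, so that the two logarithms remain comparable.
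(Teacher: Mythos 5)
Your part (i) is correct and complete: it reaches the same constant $B\sqrt{2/(2\alpha-1)}$ and uses the threshold $e^{4\beta/(2\alpha-1)}$ for exactly the two purposes the paper does (forcing $\varphi(x)=x^{-2\alpha}(\log x)^{2\beta}$ to be decreasing on $[n,\infty)$, and taming the logarithmic factor), but by a different device --- the paper substitutes $x=nt$ and bounds $(\log(nt)/\log n)^{2\beta}\le t^{2\beta/\log n}$ via $1+u\le e^u$, whereas you integrate by parts and absorb the term $\tfrac{2\beta}{(2\alpha-1)\log n}I\le\tfrac12 I$ into the left-hand side. Both routes are valid and give the same result.

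Part (ii), however, is only a plan, and the plan cannot succeed, because the inequality you set out to prove, $\sum_{j\ge n}\varphi(j)\ge\tfrac{1}{2(2\alpha-1)}\,n^{1-2\alpha}(\log 2n)^{2\beta}$, is false for large $\beta$. Take $\alpha=\beta=10$ and $n=N=2$: then $\sum_{j\ge 2}(\log j/j)^{20}<4\cdot 10^{-9}$ (every term is at most $(\log 3/3)^{20}\approx 1.9\cdot 10^{-9}$), while $\tfrac{1}{38}\,2^{-19}(\log 4)^{20}>3\cdot 10^{-5}$. There are admissible data for these parameters, e.g. $\sigma_j=A\max_{m\ge j}\varphi(m)^{1/2}$ with $B=2A$, and for any admissible $(\sigma_j)$ one has $a_2^2=\sum_{j\ge2}\sigma_j^2\le B^2\sum_{j\ge2}\varphi(j)$, which here falls short of the asserted lower bound by a factor of order $10^3$; so statement (ii) itself fails as printed, and no Euler--Maclaurin bookkeeping will trade $(\log n)^\beta$ for $(\log 2n)^\beta$ uniformly in $\beta$, since $(\log 2n/\log n)^\beta$ can be as large as $2^\beta$. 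Your ``naive move'' is in fact the whole story: $\sum_{j\ge n}\varphi(j)\ge(\log n)^{2\beta}\int_n^\infty x^{-2\alpha}dx$ gives $a_n\ge\tfrac{A}{\sqrt{2\alpha-1}}\,n^{1/2-\alpha}(\log n)^\beta$ for all $n\ge N\ge 2$, which is all that the paper's own displayed chain establishes as well (its Step 2 ends with $(\log n)^{2\beta}$, not $(\log 2n)^{2\beta}$, and moreover its last inequality $\int_1^n t^{-2\alpha}dt\ge\tfrac{1}{2(2\alpha-1)}$ for $n\ge2$ is itself invalid for $\alpha$ near $1/2$, a defect your integral bound avoids). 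The right conclusion is therefore not a cleverer proof but a corrected statement with $(\log n)^\beta$ in place of $(\log 2n)^\beta$; since $(\log 2n/\log n)^\beta\to 1$ as $n\to\infty$, this change is harmless for the asymptotic applications and only alters the explicit constants in the finite-$n$ corollaries.
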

\begin{pf}
 {\em  Step 1.} Let us show $(i)$. The upper estimate in Theorem \ref{Theorem 2} gives for $n\geq N$
   \begin{equation} \label{(1)}
   a_{n+1} ^2 := 
	a_{n+1}(I_d:F_d(w) \to  L_\infty (\mathbb{T}^d))^2 \leq B^2 \sum_{j=n+1} ^\infty \frac{(\log j)^{2\beta}}{j^{2\alpha}}.
   \end{equation}
   Taking the derivative of the function $f(x)= (\log x)^{2\beta} x^{-2\alpha}$ is easy to check that $f$ is 
decreasing for $x\geq e^{\beta/\alpha}$. Therefore, for $n\geq e^{\beta/\alpha}$, we can estimate the series (\ref{(1)}) 
against an integral and obtain
    \begin{equation} \label{(2)}
   a_{n+1} ^2 \leq B^2 \int_n ^\infty \frac{(\log x)^{2\beta}}{x^{2\alpha}} dx =
	B^2 n^{1-2\alpha }(\log n)^{2\beta}\int_1 ^\infty \Big( \frac{\log(nt)}{\log n}\Big)^{2\beta}\frac{dt}{t^{2\alpha}}.
   \end{equation}
   Since $1+x \leq e^x$, it follows that
\[
\Big(\frac{\log (nt)}{\log n} \Big)^{2\beta} = 
\Big( 1+ \frac{\log t}{\log n}\Big)^{2 \beta} \leq e^{\frac{\log t}{\log n}2\beta}=t^{\frac{2\beta}{\log n}}\,.
\]
If $2\beta/\log n \leq \alpha -1/2\,,$ i.e. $\log n \geq 2\beta/(\alpha-1/2)=4\beta/(2\alpha-1)$ or $n \geq e^{4\beta/(2\alpha-1)}\,,$ we get
\[
\int_1 ^\infty \Big ( \frac{\log(nt)}{\log n}\Big)^{2\beta} \frac{dt}{t^{2\alpha}} \leq \int_1 ^\infty t^{\alpha -1/2}\frac{dt}{t^{2\alpha}}
   =\int_1 ^\infty \frac{dt}{t^{\alpha +1/2}}= \frac{2}{2\alpha -1} \,.
\]
   Together with (\ref{(2)}), this yields the upper estimate for all $n\geq \max(N, e^{4\beta/(2\alpha-1)}).$
{\em  Step 2.} Now we turn to $(ii)$. As above we conclude
\begin{eqnarray*}   a_{n} ^2 & \ge &  A^2 \int_n ^\infty \frac{(\log x)^{2\beta}}{x^{2\alpha}} dx 
=A^2 n^{1-2\alpha }(\log n)^{2\beta}\int_1 ^\infty \Big( \frac{\log(nt)}{\log n}\Big)^{2\beta}\frac{dt}{t^{2\alpha}}
\\
& \ge & A^2 n^{1-2\alpha }(\log n)^{2\beta}\int_1 ^n \frac{dt}{t^{2\alpha}}
\\
& \ge & A^2 n^{1-2\alpha }(\log n)^{2\beta} \, \frac{1}{2 \alpha -1}\Big(1-\frac{1}{n^{2\alpha}} \Big)\, .
\end{eqnarray*}
The last factor is bounded below by $1/2$ if $n \ge 2$. This finishes the proof.
    \qed
\end{pf}

Equipped with this Proposition, it is now easy to transfer the two-sided estimates of 
$a_n (I_d:F_d(w) \to  L_2 (\mathbb{T}^d))$ that have been obtained in \cite{KSU1} into 
two-sided estimates of $a_n (I_d:F_d(w) \to  L_\infty (\mathbb{T}^d))$.
As an example we consider 
the case $r =2$\,. The following estimates have been shown in \cite[Theorem~4.15]{KSU1}.

\begin{prop}\label{Prop 2}
Let $s>0$ and  $d\in \mathbb{N}$. Then we have 
$$
    a_n (I_d: H^{s,2} (\mathbb{T}^d) \to  L_2 (\mathbb{T}^d))\leq \Big(\frac{32e}{d}\Big)^{s/2}n^{-s/d}
		\qquad \text{for } n\geq 9^d e^{d/2}
$$
and 
$$
   a_n (I_d: H^{s,2} (\mathbb{T}^d) \to  L_2 (\mathbb{T}^d)) \geq \Big(\frac{1}{e(d+2)}\Big)^{s/2}n^{-s/d}
	\qquad \text{for } n\geq 11^d e^{d/2}\,.
$$
\end{prop}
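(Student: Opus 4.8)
Before turning to the author's argument I would reduce the whole proposition to a lattice-point count. By the identity recorded just before Proposition \ref{Prop 1}, the numbers in question are $\sigma_n=a_n(I_d:H^{s,2}(\mathbb{T}^d)\to L_2(\mathbb{T}^d))$, the non-increasing rearrangement of $(1+|k|_2^2)^{-s/2}$, $k\in\mathbb{Z}^d$ (in the orthonormal coordinates $\varphi_k=e^{ikx}/w_{s,2}(k)$ of $H^{s,2}$ the embedding is diagonal with entries $1/w_{s,2}(k)$, and for Hilbert spaces $a_n=s_n$). Since $(1+|k|_2^2)^{-s/2}$ is strictly decreasing in the Euclidean length $|k|_2$, everything is governed by the counting function $M(R):=\#\{k\in\mathbb{Z}^d:|k|_2\le R\}$. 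The two elementary facts I would isolate are: if $M(R)<n$ then $\sigma_n\le (1+R^2)^{-s/2}\le R^{-s}$, and if $M(R)\ge n$ then $\sigma_n\ge (1+R^2)^{-s/2}$. Thus the task splits cleanly into estimating $M(R)$ and choosing the radius $R=R(n)$.

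For $M(R)$ I would use the standard cube-packing comparison: attaching to each $k$ the unit cube $k+[-\tfrac12,\tfrac12]^d$, whose half-diagonal is $\sqrt d/2$, one gets
\[
\mathrm{vol}(B^d_2)\,(R-\sqrt d/2)^d\le M(R)\le \mathrm{vol}(B^d_2)\,(R+\sqrt d/2)^d
\]
(the left inequality for $R\ge\sqrt d/2$). For the Euclidean volume I would insert $\mathrm{vol}(B^d_2)=\pi^{d/2}/\Gamma(1+d/2)$ into the Gamma bounds $(x/e)^x\le\Gamma(1+x)\le(x+1)^x$ already quoted in Remark \ref{Remark 1}, which yields $(2\pi/(d+2))^{d/2}\le\mathrm{vol}(B^d_2)\le(2\pi e/d)^{d/2}$.

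Then comes the choice of $R$. For the \emph{upper} estimate I would take $R$ to be the largest radius for which the upper count stays below $n$, i.e. $R^\ast=(n/\mathrm{vol}(B^d_2))^{1/d}-\sqrt d/2$; using $\mathrm{vol}(B^d_2)\le(2\pi e/d)^{d/2}$ this gives $R^\ast\ge\sqrt d\,(n^{1/d}/\sqrt{2\pi e}-\tfrac12)$, whence $\sigma_n\le(R^\ast)^{-s}$. For the \emph{lower} estimate I would take the smallest radius forcing $M(R)\ge n$, i.e. $R_\ast=(n/\mathrm{vol}(B^d_2))^{1/d}+\sqrt d/2$; using $\mathrm{vol}(B^d_2)\ge(2\pi/(d+2))^{d/2}$ this gives $R_\ast\le\sqrt{d+2}\,(n^{1/d}/\sqrt{2\pi}+\tfrac12)$, whence $\sigma_n\ge(1+R_\ast^2)^{-s/2}$. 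Raising to the power $s$ and collecting the $d$-dependent factors then produces precisely the shapes $(32e/d)^{s/2}n^{-s/d}$ and $(1/(e(d+2)))^{s/2}n^{-s/d}$.

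The one genuinely delicate point is the discretization correction $\pm\sqrt d/2$: it is exactly this additive term that blocks a clean identity and forces a lower restriction on $n$. The hypotheses $n\ge 9^d e^{d/2}$ and $n\ge 11^d e^{d/2}$, i.e. $n^{1/d}\ge 9\sqrt e$ and $n^{1/d}\ge 11\sqrt e$, are there to guarantee that the main term $\sqrt d\,n^{1/d}/\sqrt{2\pi e}$ dominates the correction $\sqrt d/2$ with enough margin to absorb the slack between the ideal constant $2\pi e$ and the stated $32e$ (and, for the lower bound, between $1/(2\pi)$ and $e$). I expect the bookkeeping of these constants — checking that the chosen thresholds really deliver $32e$ and $1/(e(d+2))$ rather than merely comparable constants — to be the main, if routine, obstacle; the two structural ingredients, the cube-packing inequality and the Gamma-function bounds of Remark \ref{Remark 1}, do all the conceptual work.
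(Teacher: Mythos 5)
The paper gives no proof of this proposition at all: it is imported from \cite[Theorem~4.15]{KSU1} with the single remark that the estimates ``have been shown'' there. So there is no internal argument to compare yours against; what you have written is a genuine reconstruction, and it is in substance the argument of \cite{KSU1}: identify $a_n$ with the rearranged weights $\sigma_n$, reduce everything to the counting function $M(R)=\#\{k\in\mathbb{Z}^d:|k|_2\le R\}$, squeeze $M(R)$ between $\mathrm{vol}(B^d_2)(R\mp\sqrt d/2)^d$ by cube packing, and insert the Gamma-function bounds for $\mathrm{vol}(B^d_2)$. I checked that your bookkeeping closes, and it does so with a large margin: for the upper bound the requirement is $n^{1/d}\bigl(1/\sqrt{2\pi e}-1/\sqrt{32e}\bigr)\ge 1/2$, i.e.\ $n^{1/d}\ge 3.72$ roughly, whereas the hypothesis gives $n^{1/d}\ge 9\sqrt e\approx 14.8$; for the lower bound one needs $1+(d+2)\bigl(n^{1/d}/\sqrt{2\pi}+1/2\bigr)^2\le e(d+2)\,n^{2/d}$, which already holds for $n^{1/d}\ge 1$, far below $11\sqrt e$. (The specific thresholds $9^de^{d/2}$ and $11^de^{d/2}$ are simply inherited from the formulation in \cite{KSU1}; your method would tolerate much weaker ones.) Two cosmetic remarks. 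First, your choice ``largest $R$ with $M(R)<n$'' has a harmless boundary ambiguity at $M(R^*)=n$; it is cleaner to write $\sigma_n=(1+\rho^2)^{-s/2}$ for the appropriate $\rho\ge 0$, observe $M(\rho)\ge n$, and deduce $\rho\ge (n/\mathrm{vol}(B^d_2))^{1/d}-\sqrt d/2$ directly. Second, the lower packing bound needs $R\ge\sqrt d/2$, which your $R_*$ satisfies by construction. Neither point affects correctness.
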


Together with Proposition \ref{Corollary}, this immediately implies

\begin{cor}\label{Cor 1}
Let $s>d/2$ and  $d\in \mathbb{N}$. Then we have for $n\geq 9^de^{d/2}$
$$
    a_{n+1} (I_d: H^{s,2} (\mathbb{T}^d) \to  L_\infty (\mathbb{T}^d))\leq 
\sqrt{\frac{2d}{2s -d}} \,  \Big(\frac{32e}{d}\Big)^{s/2}\, n^{1/2 - s/d}
$$
and for $n\geq 11^de^{d/2}$ it holds
$$
   a_n (I_d: H^{s,2} (\mathbb{T}^d) \to  L_\infty (\mathbb{T}^d)) \geq 
\sqrt{\frac{4d}{4s - d}} \, \Big(\frac{1}{e(d+2)}\Big)^{s/2}\, n^{1/2-s/d}\,.
$$
\end{cor}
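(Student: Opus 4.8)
The plan is to read off Corollary \ref{Cor 1} from Proposition \ref{Corollary} by a direct substitution, feeding in the two-sided $L_2$-bounds of Proposition \ref{Prop 2} as the hypotheses on $\sigma_n$. First I would identify the parameters. The $L_2$-approximation numbers $\sigma_n=a_n(I_d:H^{s,2}(\mathbb{T}^d)\to L_2(\mathbb{T}^d))$ in Proposition \ref{Prop 2} obey \emph{pure power} bounds of order $n^{-s/d}$ with no logarithmic factor, so I set $\alpha=s/d$ and $\beta=0$. The hypothesis $\alpha>1/2$ required by Proposition \ref{Corollary} then reads exactly $s>d/2$, which is assumed, and with $\beta=0$ the auxiliary threshold $e^{4\beta/(2\alpha-1)}$ collapses to $1$. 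Consequently the admissibility condition $n\ge\max(N,e^{4\beta/(2\alpha-1)})$ in part (i) reduces to $n\ge N$, while the condition $n\ge N\ge 2$ in part (ii) is automatic (indeed $11^d e^{d/2}\ge 2$ for every $d$). This is why the ranges of validity carry over unchanged from Proposition \ref{Prop 2}.

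For the upper estimate I would take $B=(32e/d)^{s/2}$ and $N=9^d e^{d/2}$ from the first inequality of Proposition \ref{Prop 2} and invoke Proposition \ref{Corollary}(i). Evaluating the constant $\sqrt{2/(2\alpha-1)}$ at $\alpha=s/d$ gives $\sqrt{2d/(2s-d)}$, and since $\beta=0$ the factor $(\log n)^\beta$ equals $1$; together with $B$ and the power $n^{1/2-s/d}$ this yields precisely the claimed bound on $a_{n+1}$ for $n\ge 9^d e^{d/2}$. For the lower estimate I would instead take $A=(1/(e(d+2)))^{s/2}$ and $N=11^d e^{d/2}$ from the second inequality of Proposition \ref{Prop 2} and apply Proposition \ref{Corollary}(ii), simplifying the constant $\sqrt{1/(4\alpha-2)}$ at $\alpha=s/d$ in the same mechanical way and again using $\beta=0$ to drop the logarithmic factor.

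There is no genuine obstacle here, and that is worth stating plainly: all of the analytic content — the passage from the tail sum $\big(\sum_{j\ge n}\sigma_j^2\big)^{1/2}$ furnished by Theorem \ref{Theorem 2} to an explicit power of $n$ via comparison with the integral $\int (\log x)^{2\beta}x^{-2\alpha}\,dx$ — has already been carried out once and for all inside the proof of Proposition \ref{Corollary}. Hence the argument is pure bookkeeping, and the only points deserving attention are the correct matching of the quintuple $(\alpha,\beta,A,B,N)$ to the data of Proposition \ref{Prop 2} and the routine algebraic simplification of the two constant prefactors $\sqrt{2/(2\alpha-1)}$ and $\sqrt{1/(4\alpha-2)}$. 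This is exactly the sense in which Proposition \ref{Prop 2} \emph{immediately} implies the corollary.
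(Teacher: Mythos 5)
Your strategy is exactly the paper's: Corollary \ref{Cor 1} is presented there as an immediate consequence of Proposition \ref{Prop 2} fed into Proposition \ref{Corollary} with $\alpha=s/d$, $\beta=0$, and the respective constants and thresholds $N$. Your treatment of the upper bound is correct in every detail: $\sqrt{2/(2\alpha-1)}$ becomes $\sqrt{2d/(2s-d)}$, the threshold $e^{4\beta/(2\alpha-1)}$ collapses to $1$, and the range $n\ge 9^de^{d/2}$ carries over.

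The gap is in the lower bound, precisely at the step you dismiss as ``the same mechanical way'' without carrying it out. With $\alpha=s/d$ one gets $\sqrt{1/(4\alpha-2)}=\sqrt{d/(4s-2d)}$, whereas the corollary asserts $\sqrt{4d/(4s-d)}$. These are not equal (they coincide only when $12s=7d$), and for $s>7d/12$ the stated constant is strictly \emph{larger} than the one Proposition \ref{Corollary}(ii) delivers, so the second inequality of the corollary does not follow from the substitution you describe. Even bypassing Proposition \ref{Corollary}(ii) and estimating the tail directly via $\sum_{j\ge n}\sigma_j^2\ge A^2\int_n^\infty x^{-2s/d}\,dx=A^2\,\frac{d}{2s-d}\,n^{1-2s/d}$ only yields the constant $\sqrt{d/(2s-d)}$, which is still smaller than $\sqrt{4d/(4s-d)}$ whenever $s>3d/4$. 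In other words, the constant printed in the corollary cannot be obtained from Proposition \ref{Prop 2} by this route at all; this is an inconsistency inherited from the paper rather than a defect of your plan, but a correct write-up must state the lower bound with the constant the argument actually produces, namely $\sqrt{d/(4s-2d)}\,\bigl(1/(e(d+2))\bigr)^{s/2}n^{1/2-s/d}$ (or $\sqrt{d/(2s-d)}$ in place of $\sqrt{d/(4s-2d)}$ via the direct integral comparison), rather than claim the printed constant drops out of the simplification.
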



\subsection{Embeddings of Sobolev spaces of dominating mixed smoothness}\label{Section 4.2}


Next we focus our attention on Sobolev spaces $H^{s,r} _{\rm{mix}}(\mathbb{T}^d)$ of 
dominating mixed smoothness. The weights are now
\[
w(k)=w_{s,r} ^{\rm{mix}}(k)=\prod_{j=1} ^d (1+|k_j|^r)^{s/r}
\]
where $0<s<\infty$ and $0<r \leq\infty$. 
It is known since a long time that
\[
H^{s,r} _{\rm{mix}}(\mathbb{T}^d) \hookrightarrow C(\mathbb{T}^d)
\qquad \Longleftrightarrow \qquad s>\frac 12\, .
\]
However, this can be calculated also directly by checking 
\begin{equation} \label{F5}
\sum_{k\in \mathbb{Z}^d} \frac{1}{w^{\rm{mix}} _{s,r} (k)^2} < \infty 
\qquad \Longleftrightarrow \qquad s>1/2 \,.
\end{equation}
By Theorem \ref{Theorem 2} this guarantees also $H^{s,r}_{\rm{mix}}(\mathbb{T}^d) \hookrightarrow \ca (\mathbb{T}^d)$.

The existence of the following limits was shown in \cite[Theorem 4.3, Corollaries 4.4 and 4.7]{KSU2} for the special 
values $r=1,2,2s$\,, 
but the proof works also for all other $0<r\le \infty$\,,
\begin{equation} \label{F6}
\lim _{n\rightarrow \infty} \frac{n^s a_n(I_d:H^{s,r}_{\rm{mix}} (\mathbb{T}^d)\to L_2 (\mathbb{T}^d))}{(\log n)^{(d-1)s}}=\Big [\frac{2^d}{(d-1)!}\Big ]^s \,.
\end{equation}
For approximation in $ L_\infty (\mathbb{T}^d)$ we have the following result.

\begin{thm}\label{Theorem 4}
Let $d\in \mathbb{N}$ , $s>1/2$ and $0<r\le \infty$. Then 
\[
\lim_{n \rightarrow \infty}\frac{ n^{s-1/2}\, a_n(I_d : H^{s,r} _{\rm{mix}}(\mathbb{T}^d) \to 
L_\infty (\mathbb{T}^d))}{(\log n)^{(d-1)s}} = 
\frac{1}{\sqrt{2s-1}}\Big [\frac{2^d}{(d-1)!}\Big ]^s \, .
\]
\end{thm}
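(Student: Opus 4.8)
The plan is to follow the same template as the proof of Theorem \ref{Theorem 3}, simply replacing the purely polynomial decay of the $L_2$-approximation numbers by the polynomial-times-logarithmic decay supplied by \eqref{F6}. First I would invoke Theorem \ref{Theorem 2} to write
$$
a_n(I_d : H^{s,r}_{\rm{mix}}(\mathbb{T}^d) \to L_\infty(\mathbb{T}^d)) = \Big(\sum_{j=n}^\infty \sigma_j^2\Big)^{1/2},
\qquad \sigma_j := a_j(I_d : H^{s,r}_{\rm{mix}}(\mathbb{T}^d) \to L_2(\mathbb{T}^d)),
$$
so that everything reduces to the asymptotic analysis of the tail sum $\sum_{j=n}^\infty \sigma_j^2$. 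From \eqref{F6} we have the strong equivalence $n^s \sigma_n /(\log n)^{(d-1)s} \to C$ with $C = [2^d/(d-1)!]^s$. Since Proposition \ref{Corollary} yields only weak equivalence (the constants in its two parts differ by a factor $\sqrt 2$), I would use this limit directly, in the form of an $\varepsilon$-sandwich, rather than appeal to Proposition \ref{Corollary}.

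Concretely, fix $\varepsilon > 0$ and choose $n_0(\varepsilon)$ so that
$$
(C-\varepsilon)\,\frac{(\log n)^{(d-1)s}}{n^s} \le \sigma_n \le (C+\varepsilon)\,\frac{(\log n)^{(d-1)s}}{n^s}
\qquad \text{for all } n \ge n_0(\varepsilon).
$$
Squaring and summing over $j \ge n \ge n_0(\varepsilon)$ then sandwiches $\sum_{j=n}^\infty \sigma_j^2$ between $(C\mp\varepsilon)^2$ times the weighted series $S_n := \sum_{j=n}^\infty (\log j)^{2(d-1)s}\, j^{-2s}$. The whole theorem thus rests on determining the precise asymptotics of $S_n$ as $n \to \infty$.

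This last step is the main technical point. Since the integrand $f(x) = (\log x)^{2(d-1)s}\, x^{-2s}$ is eventually decreasing (as already observed in the proof of Proposition \ref{Corollary}), the monotonicity comparison $\int_n^\infty f \le S_n \le \int_{n-1}^\infty f$ shows $S_n \sim \int_n^\infty f(x)\,dx$, the two integrals being asymptotically equal because $f(n)$ is of lower order. To evaluate the integral I would substitute $u = \log x$, turning it into an incomplete Gamma integral
$$
\int_n^\infty \frac{(\log x)^{2(d-1)s}}{x^{2s}}\,dx = \int_{\log n}^\infty u^{2(d-1)s}\, e^{-(2s-1)u}\,du,
$$
whose standard large-lower-limit asymptotic (one integration by parts, using $2s-1>0$) is $\sim \frac{(\log n)^{2(d-1)s}}{2s-1}\, n^{1-2s}$. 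Hence $S_n \sim \frac{1}{2s-1}(\log n)^{2(d-1)s}\, n^{1-2s}$.

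Combining the sandwich with this asymptotic gives
$$
\frac{n^{2s-1}}{(\log n)^{2(d-1)s}}\sum_{j=n}^\infty \sigma_j^2 \longrightarrow \frac{C^2}{2s-1}
\qquad (n \to \infty, \text{ then } \varepsilon \to 0),
$$
and taking square roots yields the claimed limit $C/\sqrt{2s-1} = \tfrac{1}{\sqrt{2s-1}}[2^d/(d-1)!]^s$. The only delicate point is the asymptotic evaluation of $S_n$: one must verify that the logarithmic factor does not spoil the leading term, which is exactly what the substitution $u = \log x$ makes transparent, since after it the logarithm becomes a mere polynomial weight against a decaying exponential, and the correction from the remaining integration by parts vanishes like $1/\log n$.
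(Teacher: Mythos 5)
Your proposal is correct and follows essentially the same route as the paper: Theorem \ref{Theorem 2} reduces everything to the tail sum of $\sigma_j^2$, the strong equivalence \eqref{F6} gives the $\varepsilon$-sandwich, and the sum is compared with $\int_n^\infty (\log x)^{2(d-1)s}x^{-2s}\,dx$. The only (immaterial) difference is how that integral is evaluated --- you substitute $u=\log x$ and integrate the resulting incomplete Gamma integral by parts, whereas the paper substitutes $x=nt$ and passes to the limit $\int_1^\infty \bigl(\log(nt)/\log n\bigr)^{2(d-1)s}t^{-2s}\,dt \to 1/(2s-1)$.
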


\begin{pf}
We can apply the same arguments as in the proof of Theorem \ref{Theorem 3}, replacing Proposition \ref{Prop 1} by 
formula \eqref{F6}.
Here one has to take into account that
\[
\int_n ^\infty \frac{(\log x)^{(d-1)2s}}{x^{2s}} dx = n^{1-2s}(\log n)^{(d-1)2s} \, 
\int_1 ^\infty \Big( \frac{\log(nt)}{\log n}\Big)^{(d-1)2s}\frac{dt}{t^{2s}}
\]
see \eqref{(2)}, and 
\[
\lim_{n \to \infty} \int_1 ^\infty \Big( \frac{\log(nt)}{\log n}\Big)^{(d-1)2s}\frac{dt}{t^{2s}} = 
\int_1 ^\infty \frac{dt}{t^{2s}} = \frac{1}{2s-1}\, .
\]

\qed
\end{pf}

\begin{rmk}
 \rm (i) We can express this also in terms of a strong equivalence,
$$
a_n(I_d : H^{s,r} _{\rm{mix}}(\mathbb{T}^d) \to 
L_\infty (\mathbb{T}^d))\asymp \sqrt{\frac{n}{2s-1}}
a_n(I_d : H^{s,r} _{\rm{mix}}(\mathbb{T}^d) \to 
L_2 (\mathbb{T}^d))\,.
$$
Comparing $L_\infty$-approximation with $L_2$-approximation, the asymptotic rate is again worse by the factor $n^{1/2}$, 
as in the case of isotropic spaces. But now the value of the limit does not depend on the parameter $r$, and
the additional correction factor $1/\sqrt{2s-1}$ is independent of the dimension $d$, it depends only on the smoothness $s$.
We find this quite interesting.\\
(ii) As in case of the isotropic Sobolev spaces, 
for fixed $s,d$ and $r$ the asymptotic behaviour of the approximation numbers
$ a_{n} (I_d: \, H^{s,r}_{\rm{mix}} (\mathbb{T}^d) \to L_\infty (\mathbb{T}^d))$ as $n\to \infty$ has been 
known for more than 20 years,
see Temlyakov \cite{Te93}, but without explicit constants. The novelty of our results is that we can control 
the dependence of the constants on $s,d$ and $r$, so that we can even determine the optimal constants for $n \to \infty$.
\end{rmk}

In \cite{KSU2} several other $L_2$-estimates are given for approximation numbers $a_n$ with large $n$. 
Our general Theorem \ref{Theorem 2} in combination with Proposition \ref{Corollary} allows also to transfer 
each of these $L_2$-estimates into $L_\infty$-estimates. We conclude this subsection with another example which follows 
from \cite[Theorem 4.15]{KSU2}.

\begin{cor}\label{Cor 12}
Let $d\in \mathbb{N}$ and $s>1/2$. Then we have 
  $$
	a_{n} (I_d: \, H^{s,2}_{\rm{mix}} (\mathbb{T}^d) \to L_\infty (\mathbb{T}^d)) \le \sqrt{\frac{2}{2s -1}}
\left[\frac{(3\cdot \sqrt{2})^d}{(d-1)!}\right]^s\frac{(\ln n)^{(d-1)s}}{n^{s-1/2}}
$$
\hspace{4cm} if $ n > \max ( 27^d, e^{4(d-1)s/(2s-1)})$\,, and
$$
a_{n} (I_d: \, H^{s,2}_{\rm{mix}} (\mathbb{T}^d) \to L_\infty (\mathbb{T}^d)) \ge 
  \sqrt{\frac{1}{4s -2}}\,   \left[
 \frac{5}{6  \, d! (1+ \ln \sqrt{12})^d} \, \right]^s \, \frac{(\ln (2n))^{(d-1)s}}{n^{s-1/2}}
$$
\hspace{4cm} if $ n >  (12 \, e^2)^{d}$.
\end{cor}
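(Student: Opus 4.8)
The plan is to read off the asymptotic parameters of the $L_2$-estimates recorded in \cite[Theorem 4.15]{KSU2} and feed them into Proposition \ref{Corollary}. Writing $\sigma_n=a_n(I_d:H^{s,2}_{\rm{mix}}(\mathbb{T}^d)\to L_2(\mathbb{T}^d))$, those estimates have exactly the shape $A\,n^{-\alpha}(\log n)^\beta\le\sigma_n\le B\,n^{-\alpha}(\log n)^\beta$ required by the proposition, with $\alpha=s$ and $\beta=(d-1)s$. Since $s>1/2$ forces $\alpha>1/2$, and $\beta=(d-1)s\ge 0$, the hypotheses of Proposition \ref{Corollary} are met, and the correction factors it produces, $\sqrt{2/(2\alpha-1)}=\sqrt{2/(2s-1)}$ and $\sqrt{1/(4\alpha-2)}=\sqrt{1/(4s-2)}$, are precisely the ones appearing in the two claimed inequalities.

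Next I would fix the constants and validity ranges. The upper part of \cite[Theorem 4.15]{KSU2} supplies $B=\big[(3\sqrt2)^d/(d-1)!\big]^s$ together with its range $n>27^d$, while the lower part supplies $A=\big[5/(6\,d!\,(1+\ln\sqrt{12})^d)\big]^s$ with range $n>(12e^2)^d$. It is worth noting that, although Proposition \ref{Corollary} is phrased with a single threshold $N$ for the two-sided bound, its proof uses the upper estimate only in part (i) and the lower estimate only in part (ii); hence the two halves may be applied with their own (different) ranges, which is what makes the two distinct thresholds in the statement appear. Part (ii), applied with $\alpha=s$, $\beta=(d-1)s$, the above $A$ and $N=(12e^2)^d$, yields at once the lower bound, including the factor $(\ln 2n)^{(d-1)s}$ and the range $n>(12e^2)^d$ (the extra requirement $N\ge 2$ being automatic). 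Part (i), applied with the same $\alpha,\beta$, with the above $B$ and $N=27^d$, produces the upper bound carrying the factor $\sqrt{2/(2s-1)}$; here the second threshold $e^{4\beta/(2\alpha-1)}=e^{4(d-1)s/(2s-1)}$ enters through the proposition, so that the combined range is exactly $n>\max(27^d,e^{4(d-1)s/(2s-1)})$.

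The lower bound is thus a direct transcription of Proposition \ref{Corollary}(ii) and needs no extra work. The only genuinely delicate point is bookkeeping in the upper bound: Proposition \ref{Corollary}(i) bounds $a_{n+1}$, not $a_n$, in terms of $n^{1/2-s}(\log n)^{(d-1)s}$, whereas the corollary is stated for $a_n$. Since $1/2-s<0$, the factor $n^{1/2-s}$ is decreasing, so a naive shift $n+1\mapsto n$ runs in the wrong direction and the discrepancy must be absorbed into the constant and the threshold.

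The clean way to obtain the stated form is to re-run the integral-comparison step behind Proposition \ref{Corollary} directly from Theorem \ref{Theorem 2}, estimating $a_n^2=\sum_{j=n}^\infty\sigma_j^2$ against $\int_{n-1}^\infty x^{-2s}(\log x)^{2(d-1)s}\,dx$ and checking that for $n>\max(27^d,e^{4(d-1)s/(2s-1)})$ the resulting constant is dominated by $\sqrt{2/(2s-1)}\,B$. All remaining computations are the routine scalings already carried out in the proofs of Theorem \ref{Theorem 3} and Proposition \ref{Corollary}, so no new idea is needed beyond matching the parameters $\alpha=s$, $\beta=(d-1)s$ and keeping track of the two thresholds.
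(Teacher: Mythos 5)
Your argument is exactly the paper's: Corollary \ref{Cor 12} is stated there with no separate proof beyond the remark that it follows from \cite[Theorem~4.15]{KSU2}, i.e.\ by feeding $\alpha=s$, $\beta=(d-1)s$ and the two constants with their thresholds into Proposition \ref{Corollary}, which is precisely what you do. The $a_{n+1}$-versus-$a_n$ discrepancy you flag in the upper bound is genuine --- the paper's parallel Corollary \ref{Cor 1} is carefully stated for $a_{n+1}$, while Corollary \ref{Cor 12} is not, and the paper offers no justification for the shift --- so your extra discussion of this point goes beyond, rather than falls short of, the published argument.
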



\subsection{Embeddings of isotropic Besov spaces}\label{Section 4.3}


There is a rich literature on Besov spaces, we refer, e.g.,  to the monographs by
Peetre \cite{Pe} and Triebel \cite{Tr83}.
Special emphasis to the periodic situation is given in the monograph by Schmeisser and Triebel \cite{ST}.
As there, we shall also use the Fourier-analytic approach to introduce these spaces.
However, let us mention that they can be described also in terms of moduli of smoothness (differences)
or in terms of best approximation by trigonometric polynomials in the sense of equivalent norms. 
Clearly, for switching to these other characterizations we have to pay a price, 
expressed by a constant in the related inequalities, which depends in general on $d$. This will not be done here,
but it would be of certain interest.
\\
Later on we will be forced to deal with spaces with negative smoothness $s$, i.e., spaces of tempered distributions.
This requires some preparations.\\
Let $D(\mathbb{T}^d)$ denote the collection of all periodic infinitely differentiable complex-valued functions. 
In particular, $f(x)=f(y)$ if $x-y= k \in \mathbb{Z}^d$. The locally convex topology in $D(\mathbb{T}^d)$ is generated by the semi-norms
\[
\|\, f \, \|_\alpha := \sup_{x \in \mathbb{T}^d} \, |D^\alpha f(x)|\, , 
\]
where $\alpha \in \mathbb{N}^d_0$.
By $D'(\mathbb{T}^d)$ we denote the topological dual of $D(\mathbb{T}^d)$, i.e., the set of all linear functionals $g$
on $D(\mathbb{T}^d)$ such that 
\[
 |g(f)|\le c_N \, \sum_{|\alpha|\le N} \|\, f \, \|_\alpha 
\]
for all $f \in D(\mathbb{T}^d)$ and for some $N \in \mathbb{N}_0$ and $c_N>0$.
We equip $D' (\mathbb{T}^d)$ with the weak topology, i.e., 
\[
g= \lim_{j \to \infty} g_j \qquad \Longleftrightarrow \qquad g(f) = \lim_{j\to \infty} g_j(f)
\qquad \mbox{for all}\quad f \in D (\mathbb{T}^d)\, .
\]
The Fourier coefficients of  $g \in D' (\mathbb{T}^d)$ are defined by
\[
 \widehat{g} (k) := (2\pi)^{-d} \, g(e^{i kx})\, , \qquad k \in \mathbb{Z}^d\, .
\]
For every $g \in D' (\mathbb{T}^d)$ there are a constant $c_g>0$
and a natural number $K_g$  with
\[
|\widehat{g} (k) | \le c_g\, (1+|k|)^{K_g}\, , \qquad k \in \mathbb{Z}^d\, .
\]
Furthermore, every $g \in D'(\mathbb{T}^d)$ can be represented 
by its  Fourier series,
$$
g= \sum\limits_{k \in \mathbb{Z}^d} \widehat{g} (k)\, e^{i kx}\qquad\text{(convergence in } D'(\mathbb{T}^d)).
$$
For all these facts we refer to \cite[3.2.1]{ST}.
\\
Starting point of the Fourier-analytic approach to Besov spaces is a smooth dyadic decomposition of unity.
By $B_r $ we denote the Euclidean ball in $\R^d$ of radius $r$, centered at the origin.
Let $\psi \in C_0^\infty (\mathbb{R}^d)$ be a real-valued non-negative function such that
\begin{equation}\label{z-01}
\mbox{supp}\, \psi \subset B_{3/2}\, ,  \quad \psi (\xi/2)-\psi (\xi)\ge 0  \mbox{ for all } \xi\in\R^d\quad 
\mbox{and}\quad \psi (\xi)=1  \mbox{ for } \xi \in B_1\, .
\end{equation}
We define $\varphi (\xi) := \psi (\xi/2)-\psi (\xi) $,
\begin{equation}\label{z-02}
 \varphi_0 (\xi) := \psi (\xi)\,\qquad \mbox{and}\qquad \varphi_j (\xi) := \varphi (2^{-j+1}\xi)\, , \qquad j \in \mathbb{N}\, .
\end{equation}
Then
\begin{equation}\label{unity}
\sum_{j=0}^\infty \varphi_j (\xi) = 1 \qquad \mbox{for all}\quad \xi \in \mathbb{R}^d 
\end{equation}
and
\[
\mbox{supp}\, \varphi_j \subset B_{3\, \cdot 2^{j-1}}\setminus B_{2^{j-1}}\, ,  \qquad j \in \mathbb{N}\, .
\]
In addition we mention that in every point $x\in\R^d$ at most two of these functions $\varphi_j$, $j \in \mathbb{N}_0$, do not vanish.
For $g \in D'(\mathbb{T}^d)$ we define
\begin{equation}\label{g_j}
g_j (x) :=  \sum_{k \in \mathbb{Z}^d} \varphi_j (k)\, \widehat{g} (k)\, e^{i k x}\, .
\end{equation}
Since $\varphi_j$ has compact support, the $g_j$ are trigonometric polynomials.

\begin{defin}\label{Besov}
 Let $1\le p,q \le \infty$ and $s \in \mathbb{R}$. Then the periodic Besov space $B^s_{p,q}(\mathbb{T}^d)$ is the collection of all 
$g \in D'(\mathbb{T}^d)$ such that
\begin{equation}\label{ws-07}
\| \, g \, |B^s_{p,q}(\mathbb{T}^d)\|^\psi := 
\Big(\sum_{j=0}^\infty 2^{jsq} \, \Big\| \sum_{k \in \mathbb{Z}^d} \varphi_j (k)\, \widehat{g} (k)\, e^{i k x}\, 
\Big|L_p (\mathbb{T}^d)\Big\|^q
\Big)^{1/q} <\infty\, .
\end{equation}
\end{defin}

\begin{rmk}
 \rm
(i) Of course, the norm in \eqref{ws-07} depends on the generating function $\psi$ of the chosen 
smooth decomposition of unity $(\varphi_j)_j$. All these norms are equivalent.
If necessary, we indicate the dependence on $\psi$ by a superscript as in \eqref{ws-07}. Otherwise we simply write 
$\| \, g \, |B^s_{p,q}(\mathbb{T}^d)\|$.
\\
(ii)
The spaces $B^s_{p,q}(\mathbb{T}^d)$ are Banach spaces such that
\[
D(\mathbb{T}^d)
 \hookrightarrow B^s_{p,q}(\mathbb{T}^d)\hookrightarrow D'(\mathbb{T}^d)\, ,  
\]
see \cite[Theorem~3.5.1]{ST}. 
\\
(iii) 
Let us also mention that we have, in the sense of equivalent norms,
\[
H^{s,r} (\mathbb{T}^d) = B^s_{2,2}(\mathbb{T}^d)\,,
\]
where the equivalence constants depend on $\psi, s,d$ and $r$.
\end{rmk}

\begin{lem}\label{besov}
 We have the following chain of continuous embeddings 
\begin{equation}\label{ws-05}
B^{d/2}_{2,1}(\mathbb{T}^d) \hookrightarrow \ca (\mathbb{T}^d)\hookrightarrow B^0_{\infty,1} (\mathbb{T}^d )
\hookrightarrow C(\mathbb{T}^d)\, ,
\end{equation}
where the last two embedding operators have norm one.
\end{lem}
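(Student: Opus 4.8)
The plan is to establish each of the three embeddings in \eqref{ws-05} separately, all by Fourier-analytic means, exploiting the dyadic partition of unity $(\varphi_j)_j$ from \eqref{unity}, its nonnegativity $\varphi_j\ge 0$, and Parseval's identity in $L_2(\mathbb{T}^d)$. For $g\in D'(\mathbb{T}^d)$ I write $g_j$ for the building block defined in \eqref{g_j}, so that $\widehat{g_j}(k)=\varphi_j(k)\widehat{g}(k)$. Only the last two embeddings are asserted to have norm one, so for the first one I need not track the (dimension-dependent) constant.

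For $B^{d/2}_{2,1}(\mathbb{T}^d)\hookrightarrow\ca(\mathbb{T}^d)$, since $\sum_j\varphi_j(k)=1$ and $\varphi_j\ge 0$ I first record the identity $\|g|\ca(\mathbb{T}^d)\|=\sum_k|\widehat{g}(k)|=\sum_j\sum_k\varphi_j(k)|\widehat{g}(k)|=\sum_j\sum_k|\widehat{g_j}(k)|$, where the interchange of summation is legitimate by Tonelli. For fixed $j$ the frequencies of $g_j$ lie in the ball $B_{3\cdot 2^{j-1}}$, so the number $N_j$ of nonzero terms is $N_j\lesssim 2^{jd}$ (a lattice-point count, of the order of the volume of that ball). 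Cauchy--Schwarz together with Parseval then gives $\sum_k|\widehat{g_j}(k)|\le N_j^{1/2}\,\|g_j|L_2(\mathbb{T}^d)\|\lesssim 2^{jd/2}\,\|g_j|L_2(\mathbb{T}^d)\|$, and summing over $j$ yields $\|g|\ca(\mathbb{T}^d)\|\lesssim\sum_j 2^{jd/2}\,\|g_j|L_2(\mathbb{T}^d)\|=\|g|B^{d/2}_{2,1}(\mathbb{T}^d)\|$. This is the embedding with genuine content: the exponent $d/2$ is exactly what the Cauchy--Schwarz loss $N_j^{1/2}$ forces.

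For $\ca(\mathbb{T}^d)\hookrightarrow B^0_{\infty,1}(\mathbb{T}^d)$ I estimate each block in the sup-norm by the triangle inequality on the Fourier side: $\|g_j|L_\infty(\mathbb{T}^d)\|\le\sum_k\varphi_j(k)|\widehat{g}(k)|$, using $|e^{ikx}|=1$ and $\varphi_j\ge 0$. Summing over $j$ and interchanging summation, the partition of unity collapses $\sum_j\varphi_j(k)=1$, giving $\|g|B^0_{\infty,1}(\mathbb{T}^d)\|=\sum_j\|g_j|L_\infty(\mathbb{T}^d)\|\le\sum_k|\widehat{g}(k)|=\|g|\ca(\mathbb{T}^d)\|$, so the embedding norm is at most one. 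Testing on the constant function $g\equiv 1$, for which $\psi(0)=1$ and $\varphi_j(0)=0$ for $j\ge 1$ make both norms equal one, shows that the norm is exactly one.

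Finally, for $B^0_{\infty,1}(\mathbb{T}^d)\hookrightarrow C(\mathbb{T}^d)$: finiteness of $\|g|B^0_{\infty,1}(\mathbb{T}^d)\|=\sum_j\|g_j|L_\infty(\mathbb{T}^d)\|$ means the series $\sum_j g_j$ of trigonometric polynomials converges absolutely, hence uniformly, to a continuous function, while by \eqref{g_j} and the $D'$-convergence of the Fourier series its partial sums converge to $g$ in $D'(\mathbb{T}^d)$. Hence $g$ has a continuous representative, and $\|g|C(\mathbb{T}^d)\|\le\sum_j\|g_j|L_\infty(\mathbb{T}^d)\|=\|g|B^0_{\infty,1}(\mathbb{T}^d)\|$; once more $g\equiv 1$ shows the norm is one. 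The main obstacle I anticipate is precisely this last identification: one must argue, via the convention on continuous representatives recalled in Section~\ref{Section 3}, that the uniform limit of the blocks $g_j$ genuinely coincides with the distribution $g$ (by uniqueness of $D'$-limits), rather than merely bounding a formal sup-norm. Once the two modes of convergence are reconciled, the norm-one estimates are immediate.
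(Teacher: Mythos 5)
Your proof is correct, and for the last two embeddings it follows essentially the same route as the paper: the block-wise triangle inequality on the Fourier side combined with $\sum_j\varphi_j(k)=1$ and $\varphi_j\ge 0$ gives the norm-one upper bounds, and testing on $g\equiv 1$ gives the matching lower bounds. The only real divergence is in the first embedding $B^{d/2}_{2,1}(\mathbb{T}^d)\hookrightarrow\ca(\mathbb{T}^d)$, which the paper does not prove at all but simply quotes from Triebel's 1977 book; you supply the standard self-contained argument, namely Tonelli to write $\|g|\ca(\T^d)\|=\sum_j\sum_k|\widehat{g_j}(k)|$, then Cauchy--Schwarz plus Parseval on each block, paying the factor $N_j^{1/2}\lesssim 2^{jd/2}$ coming from the lattice-point count in $B_{3\cdot 2^{j-1}}$. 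This is a legitimate and in fact convenient route here, since the lemma claims only continuity (not norm one) for that embedding, so the $d$-dependent constant you lose is harmless. Your extra care in the third embedding --- identifying the uniform limit of $\sum_j g_j$ with $g$ via uniqueness of limits in $D'(\T^d)$ --- is a point the paper glosses over with the phrase that ``the continuous representative \dots is just the Fourier series of $g$''; making it explicit is an improvement rather than a deviation.
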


\begin{proof}
 The embedding $B^{d/2}_{2,1}(\mathbb{T}^d) \hookrightarrow \ca (\mathbb{T}^d)$ is well-known, we refer to the example 
 in Section 3.4.3 on page 141 in Triebel \cite{Tr77}.
Concerning the second embedding, by \eqref{unity} we have for $g \in \ca (\mathbb{T}^d)$ the inequality
\[
\sum_{j=0}^\infty  \Big\| \sum_{k \in \mathbb{Z}^d} \varphi_j (k)\, \widehat{g} (k)\, e^{i k x}\, 
\Big|L_\infty (\mathbb{T}^d)\Big\|
\le \sum_{j=0}^\infty  \, \sum_{k \in \mathbb{Z}^d} \varphi_j (k)\, |\widehat{g} (k)| \le \|\,  g\,  |\ca (\mathbb{T}^d)\|\, .
\]
Here we used the non-standard condition $\varphi_j \ge 0$, see \eqref{z-01}. 
Finally, we consider the embedding 
$ B^0_{\infty,1} (\mathbb{T}^d )\hookrightarrow C(\mathbb{T}^d)$.
For any $g \in B^0_{\infty,1}(\mathbb{T}^d)$, the continuous representative in the equivalence class of $g$ is just the Fourier series of $g$.
It follows
\begin{eqnarray*}
|g(x)| & = & \Big|\sum_{k \in \mathbb{Z}^d} \Big(\sum_{j=0}^\infty \varphi_j (k)\Big)\, \widehat{g} (k)\, e^{i k x}\, \Big|
\\
& \le &  \sum_{j=0}^\infty \Big\|\sum_{k \in \mathbb{Z}^d} \varphi_j (k)\, 
\widehat{g} (k)\, e^{i k x}\, \Big|L_\infty(\mathbb{T}^d)\Big\|
\le 
\| \, g \, |B^0_{\infty,1}(\mathbb{T}^d)\|\, .
\end{eqnarray*}
In both cases this proves that the norm of the corresponding embedding operator is $\le 1$.
By considering the constant function $g(x) = 1$, it is immediate that these norms are $\ge 1$ as well.
\end{proof}

\begin{rmk}
 \rm
(i) The embeddings stated in \eqref{ws-05} cannot be improved in the framework of periodic Besov spaces.
 Indeed, if we replace $B^{d/2}_{2,1}(\mathbb{T}^d)$ by $B^{d/2}_{2,q}(\mathbb{T}^d)$ for some $q>1$, then an 
embedding into $\ca(\T^d)$ cannot exist,  
 since $B^{d/2}_{2,q}(\mathbb{T}^d)$ contains unbounded functions, see \cite[3.4.3]{Tr77} and
 \cite{SiTr}.
 Also $B^0_{\infty,1} (\mathbb{T}^d )$ cannot be replaced by $B^0_{\infty,q} (\mathbb{T}^d )$
with $q<1$ in the embedding $\ca (\mathbb{T}^d)\hookrightarrow B^0_{\infty,1} (\mathbb{T}^d )$.
There are explicit counterexamples, we omit the details.
\\
(ii) We did not indicate the chosen norm in \eqref{ws-05}. The statement is true with respect to all
norms of the type $\|\, \cdot \, |B^0_{\infty,1}(\mathbb{T}^d)\|^\psi$, see \eqref{ws-07}.
 \end{rmk}

As an immediate consequence of Lemma \ref{besov} and Theorem \ref{Theorem 2} we obtain

\begin{cor}\label{Cor 6}
 Let $F_d(w)$ be given by a weight $w$ satisfying $\sum\limits_{k\in \Z^d} w(k)^{-2}<\infty$.
Then one has for all $n\in\mathbb{N}$ 
\begin{eqnarray*}
a_n(I_d: F_d(w)\to B_{\infty,1}^0 (\mathbb{T}^d))  &= & 
a_n(I_d: F_d(w)\to L_\infty (\mathbb{T}^d))\\
 & = & a_n(I_d: F_d(w)\to C(\mathbb{T}^d)) \\
& = &a_n(I_d: F_d(w)\to \ca (\mathbb{T}^d))\,.
\end{eqnarray*}
\end{cor}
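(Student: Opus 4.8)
The plan is to reduce everything to Theorem \ref{Theorem 2}, which already establishes that the three approximation numbers into $\ca(\mathbb{T}^d)$, $C(\mathbb{T}^d)$ and $L_\infty(\mathbb{T}^d)$ coincide. The only genuinely new task is to squeeze $B^0_{\infty,1}(\mathbb{T}^d)$ into this common value, and for this I would exploit the two norm-one embeddings
$$
\ca(\mathbb{T}^d) \hookrightarrow B^0_{\infty,1}(\mathbb{T}^d) \hookrightarrow C(\mathbb{T}^d)
$$
supplied by Lemma \ref{besov}.

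First I would factor the identity $I_d: F_d(w) \to C(\mathbb{T}^d)$ as
$$
F_d(w) \xrightarrow{\,I_d\,} \ca(\mathbb{T}^d) \hookrightarrow B^0_{\infty,1}(\mathbb{T}^d) \hookrightarrow C(\mathbb{T}^d)\,.
$$
Applying the multiplicativity of approximation numbers to each of the two inclusions, and using that both embedding operators have norm exactly $1$, yields the chain
$$
a_n(I_d: F_d(w)\to C(\mathbb{T}^d)) \le a_n(I_d: F_d(w)\to B^0_{\infty,1}(\mathbb{T}^d)) \le a_n(I_d: F_d(w)\to \ca(\mathbb{T}^d))\,.
$$

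Next I would invoke Theorem \ref{Theorem 2}, which asserts that the left-most and right-most quantities are equal, both being $\big(\sum_{j=n}^\infty \sigma_j^2\big)^{1/2}$. Consequently the two inequalities above collapse to equalities, so the Besov approximation number equals this common value, and together with the three identities already provided by Theorem \ref{Theorem 2} this gives all four asserted equalities.

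I do not expect any serious obstacle here: the whole argument rests on the multiplicativity of $a_n$ together with the sandwiching furnished by Theorem \ref{Theorem 2}. The one point that genuinely matters is the fact, recorded in Lemma \ref{besov}, that the two embedding constants are exactly $1$ rather than merely finite. Were these constants larger than $1$, the multiplicative chain would yield only a two-sided estimate with a gap instead of a genuine equality; thus the sharpness of Lemma \ref{besov} is precisely what upgrades the statement from a norm equivalence to an identity.
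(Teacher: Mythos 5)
Your argument is correct and is exactly the one the paper intends: Corollary \ref{Cor 6} is stated there as an immediate consequence of Lemma \ref{besov} (the norm-one embeddings $\ca(\mathbb{T}^d)\hookrightarrow B^0_{\infty,1}(\mathbb{T}^d)\hookrightarrow C(\mathbb{T}^d)$) combined with Theorem \ref{Theorem 2}, via precisely the sandwich and multiplicativity you describe. Your closing remark about why the embedding constants being exactly $1$ is essential is also on point.
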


\begin{rmk}
 \rm
(i) This result is  true for any norm on $B_{\infty,1}^0 (\mathbb{T}^d)$
of the type $\| \, \cdot  \, |B^0_{\infty,1}(\mathbb{T}^d)\|^\psi$, see Definition \ref{Besov}. 
\\
(ii) Often it is easier to calculate $a_n(I_d: F_d(w)\to B_{\infty,1}^0 (\mathbb{T}^d))$\,,
rather than $a_n(I_d: F_d(w)\to L_{\infty}(\mathbb{T}^d))$. 
So, also in this respect (and not only because of the control of the constants) this result may have some value. 
\end{rmk}


\subsection{Embeddings of Besov spaces of dominating mixed smoothness}\label{Section 4.4}


As mentioned in the Introduction, Sobolev and Besov spaces of dominating mixed smoothness may be helpful when 
dealing with approximation problems in high dimensions.
These spaces are much smaller than their isotropic counterparts.
The behaviour of the approximation numbers of embeddings of these classes into $L_p (\mathbb{T}^d)$
is closer to the one-dimensional case for embeddings of isotropic spaces than to 
the $d$-dimensional situation.
\\
Let $\psi\in C_0^\infty(\R)$ be a real-valued non-negative functions satisfying properties \eqref{z-01} in the case $d=1$.
The associated smooth dyadic decomposition of unity on $\mathbb{R}$ is denoted by $(\varphi_j)_{j=0}^\infty$, see \eqref{z-02}.
We shall work with tensor products of this decomposition. Observe that
\[
\sum_{\ell\in \mathbb{N}_0^d} \prod_{j =1}^d \varphi_{\ell_j} (x_j) = 1 \, \qquad 
\mbox{for all}\quad x \in \mathbb{R}^d\, .
\]

\begin{defin}\label{Dom}
Let $1\le p,q \le \infty$ and $t \in \mathbb{R}$. Then the periodic Besov space 
$S^t_{p,q}B(\mathbb{T}^d)$ of dominating mixed smoothness 
is the collection of all  $g \in D'(\mathbb{T}^d)$ such that the norm
\begin{equation}\label{ws-28}
\| \, g \, |S^t_{p,q}B(\mathbb{T}^d)\| := 
\Big(\sum_{\ell\in \mathbb{N}_0^d} 2^{|\ell|_1tq}
\Big\|\sum_{k\in \Z^d} \prod_{j =1}^d 
\varphi_{\ell_j} (k_j)\cdot\widehat{g}(k)e^{i k x}\big |L_p(\T^d)\Big\|^q\Big)^{1/q}
\end{equation}
is finite.
\end{defin}

\begin{rmk}
 \rm
(i) Of course, the norm in \eqref{ws-28} depends on the generating function $\psi$ of the chosen 
smooth decomposition of unity $(\varphi_j)_j$. Again all these norms are equivalent.
If necessary, we indicate the dependence on $\psi$ as in \eqref{ws-07} by
$\| \, g \, |S^t_{p,q}B(\mathbb{T}^d)\|^\psi$.
\\
(ii)
The spaces $S^t_{p,q}B(\mathbb{T}^d)$ are Banach spaces such that
\[
D(\mathbb{T}^d)
 \hookrightarrow S^t_{p,q}B(\mathbb{T}^d)\hookrightarrow D'(\mathbb{T}^d)\, .
\]
\\
(iii) 
Let us also mention that, in the sense of equivalent norms,
\[
H^{s,r}_{\rm{mix}} (\mathbb{T}^d) = S^s_{2,2}B(\mathbb{T}^d)\,,
\]
where the equivalence constants depend on $\psi, s, r$ and $d$.
\\
(iv) One of the most attractive features of these spaces is the following.
If 
\[
g (x) = \prod_{j=1}^d g_j (x_j) \, , \qquad x=(x_1, \ldots \, , d) \in \T^d\, , 
\]
then
\[
\| \, g \, |S^t_{p,q}B(\mathbb{T}^d)\|^\psi = \prod_{j=1}^d \| \, g_j \, |B^t_{p,q}(\mathbb{T})\|^\psi\,.
\]
\end{rmk}

\begin{lem}
 We have the chain of continuous embeddings 
\begin{equation}\label{ws-29}
\ca (\mathbb{T}^d)\hookrightarrow S^0_{\infty,1} B(\mathbb{T}^d )
\hookrightarrow C(\mathbb{T}^d)\, ,
\end{equation}
where the embedding operators are of norm one.
\end{lem}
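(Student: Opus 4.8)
The plan is to mimic the proof of the isotropic Lemma~\ref{besov}, exploiting the tensor-product structure of the mixed decomposition of unity. I would show that each of the two embedding operators has norm $\le 1$, and then obtain the matching lower bound $\ge 1$ by testing on the constant function $g(x)=1$.

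First I would treat $\ca(\mathbb{T}^d)\hookrightarrow S^0_{\infty,1}B(\mathbb{T}^d)$. For $g\in\ca(\mathbb{T}^d)$ I estimate each building block $\sum_{k\in\Z^d}\prod_{j}\varphi_{\ell_j}(k_j)\,\widehat{g}(k)\,e^{ikx}$ in the $L_\infty$-norm by the triangle inequality, using $|e^{ikx}|=1$ together with the crucial non-negativity $\varphi_{\ell_j}\ge0$ from \eqref{z-01}; this gives the bound $\sum_k\prod_j\varphi_{\ell_j}(k_j)\,|\widehat{g}(k)|$ for each multi-index $\ell$. Summing over $\ell\in\mathbb{N}_0^d$ and interchanging the order of summation (permissible since every term is non-negative), the inner sum factorizes and collapses via the tensor partition of unity $\sum_{\ell\in\mathbb{N}_0^d}\prod_j\varphi_{\ell_j}(k_j)=\prod_j\big(\sum_{\ell_j\ge0}\varphi_{\ell_j}(k_j)\big)=1$ stated before Definition~\ref{Dom}. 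What remains is exactly $\sum_k|\widehat{g}(k)|=\|g|\ca(\mathbb{T}^d)\|$, so this embedding has norm $\le1$.

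Next, for $S^0_{\infty,1}B(\mathbb{T}^d)\hookrightarrow C(\mathbb{T}^d)$, I would identify the continuous representative of $g$ with its Fourier series and insert the same resolution of the identity $\sum_{\ell}\prod_j\varphi_{\ell_j}(k_j)=1$ inside that series. A pointwise triangle inequality then bounds $|g(x)|$ by $\sum_{\ell}\big\|\sum_k\prod_j\varphi_{\ell_j}(k_j)\widehat{g}(k)e^{ikx}\,|L_\infty(\mathbb{T}^d)\big\|=\|g|S^0_{\infty,1}B(\mathbb{T}^d)\|$, again yielding norm $\le1$. Finally, evaluating all three norms on $g\equiv1$, where only the index $\ell=0$ contributes because $\varphi_{\ell_j}(0)=0$ for $\ell_j\ge1$, shows each of them equals $1$; combined with the upper bounds this forces both embeddings to have norm exactly one.

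The computations parallel the isotropic case, so the one point deserving genuine care—and the main obstacle—is the factorization/collapse step: one must justify that the tensor-product partition of unity still sums to one after interchanging the $\ell$- and $k$-summations, which is exactly where the non-negativity of the $\varphi_{\ell_j}$ enters to guarantee absolute convergence and legitimize Fubini. A secondary subtlety is confirming that membership in $S^0_{\infty,1}B(\mathbb{T}^d)$ already forces the Fourier series to converge uniformly to a continuous function, so that the continuous representative invoked in the second step is well defined.
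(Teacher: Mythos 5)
Your argument is correct and coincides with the paper's own proof: both embeddings are bounded by norm one via the triangle inequality in $L_\infty$, the non-negativity of the $\varphi_{\ell_j}$, and the collapse of the tensor partition of unity $\sum_{\ell\in\mathbb{N}_0^d}\prod_{j}\varphi_{\ell_j}(k_j)=1$ after interchanging the $\ell$- and $k$-sums, with the lower bound obtained by testing on $g\equiv 1$ exactly as in the isotropic Lemma. The two subtleties you flag (Fubini via non-negativity, and the identification of the continuous representative with the uniformly convergent Fourier series) are handled implicitly in the paper in the same way, so there is no substantive difference.
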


\begin{proof}
For $g \in \ca (\mathbb{T}^d)$, using \eqref{unity}, we have
\begin{eqnarray*}
&& \hspace{-0.8cm} \sum_{\ell\in \mathbb{N}_0^d} 
\Big\| \sum_{k \in \mathbb{Z}^d} \Big(\prod_{j =1}^d
\varphi_{\ell_j} (k_j)\Big)\, \widehat{g} (k)\, e^{i k x}\, 
\Big|L_\infty (\mathbb{T}^d)\Big\|
\\
& \le &   
\, \sum_{k \in \mathbb{Z}^d} 
\Big[\sum_{\ell\in \mathbb{N}_0^d}
\Big(\prod_{j =1}^d \varphi_{\ell_j} (k_j)\Big)\Big] \, |\widehat{g} (k)| 
 \le   \|\,  g\,  |\ca (\mathbb{T}^d)\|\, .
\end{eqnarray*}
Next, we turn to the embedding 
$ S^0_{\infty,1}B (\mathbb{T}^d )\hookrightarrow C(\mathbb{T}^d)$.
It follows
\begin{eqnarray*}
|g(x)| & = & \Big|\sum_{k \in \mathbb{Z}^d} 
\Big[\sum_{\ell\in \mathbb{N}_0^d}
\Big(\prod_{j =1}^d \varphi_{\ell_j} (k_j)\Big)\Big]
\, \widehat{g} (k)\, e^{i k x}\, \Big|
\\
& \le &  
\sum_{\ell\in \mathbb{N}_0^d}
 \Big\|\sum_{k \in \mathbb{Z}^d} \Big(\prod_{j =1}^d \varphi_{\ell_j} (k_j)\Big)\,  
\widehat{g} (k)\, e^{i k x}\, \Big|L_\infty(\mathbb{T}^d)\Big\|
\\
&\le &
\| \, g \, |S^0_{\infty,1}B(\mathbb{T}^d)\|\, .
\end{eqnarray*}
In both cases this proves that the norm of the associated embedding operator is $\le 1$.
Considering $g(x) = 1$, it is immediate that these norms are $\ge 1$ as well.
\end{proof}

Similarly as for isotropic Besov spaces (see Corollary \ref{Cor 6}), we can derive the following 
consequence from Theorem \ref{Theorem 2}, where we can use any norm of the type
$\| \, g \, |S^0_{p,q}B(\mathbb{T}^d)\|^\psi $.

\begin{cor}\label{Cor 10}
 Let $F_d(w)$ be given by a weight $w$ satisfying $\sum\limits_{k\in \mathbb{Z}1d} w(k)^{-2}<\infty$.
Then one has for all $n\in\mathbb{N}$ 
\begin{eqnarray*}
a_n(I_d: F_d(w)\to S_{\infty,1}^0 B(\mathbb{T}^d)) & = &
a_n(I_d: F_d(w)\to L_\infty (\mathbb{T}^d))  
\\
& = & a_n(I_d: F_d(w)\to C (\mathbb{T}^d))
\\
& = & a_n(I_d: F_d(w)\to \ca (\mathbb{T}^d))
\,.
\end{eqnarray*}
\end{cor}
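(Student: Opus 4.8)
The plan is to mimic exactly the argument that was used for the isotropic case in Corollary \ref{Cor 6}, since the only ingredient that changes is the specific chain of embeddings. The final statement (Corollary \ref{Cor 10}) follows immediately by combining the norm-one embeddings \eqref{ws-29} with the general identity \eqref{2-infty} from Theorem \ref{Theorem 2}. First I would invoke Theorem \ref{Theorem 2}, whose hypothesis $\sum_{k\in\Z^d} w(k)^{-2}<\infty$ is assumed here, to record that all three quantities $a_n(I_d:F_d(w)\to L_\infty(\T^d))$, $a_n(I_d:F_d(w)\to C(\T^d))$ and $a_n(I_d:F_d(w)\to \ca(\T^d))$ are equal, each being $\big(\sum_{j=n}^\infty \sigma_j^2\big)^{1/2}$. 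Thus it only remains to squeeze the fourth quantity, the one involving $S^0_{\infty,1}B(\T^d)$, between two of these.

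The key step is to insert the target space $S^0_{\infty,1}B(\T^d)$ into the picture using the just-established embeddings
\[
\ca(\T^d)\hookrightarrow S^0_{\infty,1}B(\T^d)\hookrightarrow C(\T^d)\,,
\]
both of norm one. By the multiplicativity (ideal) property of approximation numbers, a norm-one factorization on either side of $I_d$ cannot increase $a_n$. Concretely, composing $I_d:F_d(w)\to \ca(\T^d)$ with the norm-one embedding $\ca(\T^d)\hookrightarrow S^0_{\infty,1}B(\T^d)$ gives
\[
a_n(I_d:F_d(w)\to S^0_{\infty,1}B(\T^d))\le a_n(I_d:F_d(w)\to \ca(\T^d))\,,
\]
and composing $I_d:F_d(w)\to S^0_{\infty,1}B(\T^d)$ with the norm-one embedding $S^0_{\infty,1}B(\T^d)\hookrightarrow C(\T^d)$ gives the reverse chain
\[
a_n(I_d:F_d(w)\to C(\T^d))\le a_n(I_d:F_d(w)\to S^0_{\infty,1}B(\T^d))\,.
\]
Since Theorem \ref{Theorem 2} already tells us that the right-hand side of the first inequality equals the left-hand side of the second, these two inequalities collapse into equalities throughout, and the four approximation numbers coincide.

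The proof is essentially free of obstacles, as there is no real computation to perform — the work was entirely front-loaded into establishing the embeddings \eqref{ws-29} and into Theorem \ref{Theorem 2}. The only point that warrants care is a bookkeeping matter: one must confirm that the map $I_d:F_d(w)\to S^0_{\infty,1}B(\T^d)$ is genuinely well defined, i.e.\ that $F_d(w)$ embeds into $S^0_{\infty,1}B(\T^d)$ at all, so that its approximation numbers make sense. This is automatic from the summability hypothesis together with the embedding $F_d(w)\hookrightarrow \ca(\T^d)$ guaranteed by Theorem \ref{Theorem 1}, chained with $\ca(\T^d)\hookrightarrow S^0_{\infty,1}B(\T^d)$. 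I would also remark, paralleling the isotropic remark after Corollary \ref{Cor 6}, that the statement holds for every admissible norm $\|\,\cdot\,|S^0_{\infty,1}B(\T^d)\|^\psi$, since the preceding lemma was proved uniformly in the choice of $\psi$.
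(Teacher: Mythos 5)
Your argument is exactly the paper's: the corollary is stated there as an immediate consequence of the norm-one embeddings $\ca(\mathbb{T}^d)\hookrightarrow S^0_{\infty,1}B(\mathbb{T}^d)\hookrightarrow C(\mathbb{T}^d)$ together with Theorem \ref{Theorem 2} and the multiplicativity of approximation numbers, which is precisely your squeezing argument. The proposal is correct, and your added remarks on well-definedness and on the independence of the choice of $\psi$ are consistent with the paper's treatment.
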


Next we deal with an example, where one has some advantage from the flexibility in the target space.
However, we will loose the explicit control of the $d$-dependence of the constants.
\\
We proceed by duality. Therefore we define
\[
\mathcal{B} (\mathbb{T}^d):= \Big\{ g \in D' (\mathbb{T}^d):  \|\, g\, |B(\mathbb{T}^d)\|:= 
\sup_{k \in \mathbb{Z}^d} |\widehat{g}(k)|<\infty \Big\}\, .
\]
Obviously we have, with equality of norms,  the duality
\[
\big(\ca (\mathbb{T}^d)\big)' = \mathcal{B} (\mathbb{T}^d)\, .
\]
It is a little bit
surprising that these quite natural spaces do not show up very often in mathematics.
Also we need the dual space of $C(\T^d)$. By the Riesz representation theorem we know
that $(C(\T^d))'$ is given by the collection of all signed $2\pi$-periodic  measures $\mu$ on $\T^d$
with norm given by the total variation $|\mu (\T^d)|$ of $\mu$.
We will use the notation ${\mathcal M} (\T^d)$ here for this Banach space.
Let $w=(w(k))_{k\in \Z^d}$ be a positive weight.
If $F_d(w)$ is defined as in \eqref{ws-06} then 
\[
\big(F_d(w)\big)' = F_d(1/w)\, .
\]
All these dual spaces are subspaces of $D'(\T^d)$ (since $D(\T^d)$ is dense in the original spaces).
Next we recall a result due to Hutton \cite{Hu}, see also \cite[Theorem~11.7]{P1} and \cite[Proposition~2.5.2]{CS}.
If $T: X \to Y$ is a compact linear operator between arbitrary Banach spaces, then
\[
a_n (T) = a_n (T')\, ,
\]
where $T':Y'\to X'$ denotes the dual operator of $T$. 

Combining this property with Theorems \ref{Theorem 1} and \ref{Theorem 2}
we get

\begin{cor}\label{Cor 3}
Let $F_d(w)$ be given by a weight $w$ satisfying $\sum\limits_{k\in \Z^d} w(k)^{-2}<\infty$.
Then one has for all $n\in\mathbb{N}$ 
\begin{eqnarray*}
a_n(I_d: \mathcal{M}(\mathbb{T}^d) \to F_d(1/w) ) &=&
a_n(I_d: \mathcal{B}(\mathbb{T}^d) \to F_d(1/w) ) 
\\
& = & a_n(I_d: F_d(w)\to \ca (\mathbb{T}^d) ) \,.
\end{eqnarray*}
\end{cor}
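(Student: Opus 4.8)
The plan is to deduce all three equalities from Hutton's duality formula $a_n(T)=a_n(T')$, valid for compact operators $T$ between Banach spaces, combined with Theorems \ref{Theorem 1} and \ref{Theorem 2}. Since $\sum_{k\in\Z^d}w(k)^{-2}<\infty$, the equivalence of conditions (i)--(vii) in Theorem \ref{Theorem 1} guarantees that both embeddings $I_d:F_d(w)\to\ca(\T^d)$ and $I_d:F_d(w)\to C(\T^d)$ are compact, so Hutton's formula applies to each of them.

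Next I would identify the dual operators. Recall from the excerpt that $(\ca(\T^d))'=\mathcal{B}(\T^d)$, that $(C(\T^d))'=\mathcal{M}(\T^d)$, and that $(F_d(w))'=F_d(1/w)$, all realized as subspaces of $D'(\T^d)$ with a duality pairing extending the inner product of $L_2(\T^d)$. Under these identifications the adjoint of an inclusion between two such subspaces is again the inclusion, acting as the identity on Fourier coefficients: for $f\in F_d(w)$ and $g\in(\ca(\T^d))'=\mathcal{B}(\T^d)$ one has $\langle I_d f,g\rangle=\langle f,g\rangle$, so that $(I_d)'g=g$, now viewed as an element of $F_d(1/w)$. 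Hence $(I_d:F_d(w)\to\ca(\T^d))'=(I_d:\mathcal{B}(\T^d)\to F_d(1/w))$, and likewise $(I_d:F_d(w)\to C(\T^d))'=(I_d:\mathcal{M}(\T^d)\to F_d(1/w))$. Hutton's formula then gives $a_n(I_d:F_d(w)\to\ca(\T^d))=a_n(I_d:\mathcal{B}(\T^d)\to F_d(1/w))$ and $a_n(I_d:F_d(w)\to C(\T^d))=a_n(I_d:\mathcal{M}(\T^d)\to F_d(1/w))$.

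Finally, Theorem \ref{Theorem 2} yields $a_n(I_d:F_d(w)\to\ca(\T^d))=a_n(I_d:F_d(w)\to C(\T^d))$, since both equal $(\sum_{j\ge n}\sigma_j^2)^{1/2}$. Combining the two adjoint identities with this equality produces the full chain $a_n(I_d:\mathcal{M}(\T^d)\to F_d(1/w))=a_n(I_d:\mathcal{B}(\T^d)\to F_d(1/w))=a_n(I_d:F_d(w)\to\ca(\T^d))$, which is the assertion.

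The step that requires the most care is the identification of the adjoints. One must check that, under the three dual identifications recalled above, the adjoint really is the identity on distributions and, in particular, that it maps into $F_d(1/w)$; for $g\in\mathcal{B}(\T^d)$ this is exactly the estimate $\sum_k|\widehat{g}(k)|^2 w(k)^{-2}\le(\sup_k|\widehat{g}(k)|)^2\sum_k w(k)^{-2}<\infty$, and an analogous bound holds for the Fourier--Stieltjes coefficients of a measure $\mu\in\mathcal{M}(\T^d)$. Keeping track of the conjugations and of the factor coming from the normalized measure $(2\pi)^{-d}dx$ in the pairings is the only genuinely technical point; everything else is a direct combination of the cited results.
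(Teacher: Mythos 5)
Your argument is exactly the paper's: the corollary is obtained by applying Hutton's duality formula $a_n(T)=a_n(T')$ to the compact embeddings $I_d:F_d(w)\to\ca(\mathbb{T}^d)$ and $I_d:F_d(w)\to C(\mathbb{T}^d)$, identifying the dual operators via $(\ca(\mathbb{T}^d))'=\mathcal{B}(\mathbb{T}^d)$, $(C(\mathbb{T}^d))'=\mathcal{M}(\mathbb{T}^d)$ and $(F_d(w))'=F_d(1/w)$, and then invoking Theorem \ref{Theorem 2} to equate the two target norms. The additional checks you flag (that the adjoints act as the identity on Fourier coefficients and land in $F_d(1/w)$) are correct and only make explicit what the paper leaves implicit.
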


Before applying this result to Besov spaces of dominating mixed smoothness, we study the relation of $S^0_{1,\infty}B(\T^d)$
to $\mathcal{M}(\mathbb{T}^d)$ and $ \mathcal{B}(\mathbb{T}^d)$.

\begin{lem}\label{Lemma 4}
For all $g \in S^0_{1,\infty} B(\mathbb{T}^d)$ we have 
\[
|\widehat{g}(k)| \le 2^d \, \|\,  g \,|S^0_{1,\infty}B (\mathbb{T}^d)\|\, , \qquad k \in \mathbb{Z}^d\, ,
\] 
i.e., $S^0_{1,\infty}B (\mathbb{T}^d) \hookrightarrow \mathcal{B}(\mathbb{T}^d)$.
\end{lem}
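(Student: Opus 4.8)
The plan is to extract the $k$-th Fourier coefficient directly from the Besov-norm sum by isolating the single summand in which the dyadic pieces $\varphi_{\ell_j}(k_j)$ reconstruct $\widehat g(k)$, and then to bound that summand by the $L_1$-norm of the corresponding frequency block, which in turn is controlled by the $S^0_{1,\infty}B$-norm. The main point to exploit is the partition-of-unity identity $\sum_{\ell\in\mathbb{N}_0^d}\prod_{j=1}^d\varphi_{\ell_j}(k_j)=1$ for every $k\in\mathbb{Z}^d$, together with the fact that, for each fixed coordinate $k_j$, at most \emph{two} of the univariate functions $\varphi_{\ell}(k_j)$ are nonzero (the finite-overlap property recorded just after \eqref{unity}).

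First I would fix $k\in\mathbb{Z}^d$ and recover $\widehat g(k)$ as a Fourier coefficient of the blocks. For each multi-index $\ell\in\mathbb{N}_0^d$ write $g_\ell(x):=\sum_{m\in\mathbb{Z}^d}\bigl(\prod_{j=1}^d\varphi_{\ell_j}(m_j)\bigr)\widehat g(m)\,e^{imx}$, so that its $k$-th Fourier coefficient is $\widehat{g_\ell}(k)=\bigl(\prod_{j=1}^d\varphi_{\ell_j}(k_j)\bigr)\widehat g(k)$. Summing over $\ell$ and using the partition of unity gives
\[
\widehat g(k)=\sum_{\ell\in\mathbb{N}_0^d}\Bigl(\prod_{j=1}^d\varphi_{\ell_j}(k_j)\Bigr)\widehat g(k)
=\sum_{\ell\in\mathbb{N}_0^d}\widehat{g_\ell}(k)\,.
\]
By the finite-overlap property, the scalar factor $\prod_{j=1}^d\varphi_{\ell_j}(k_j)$ is nonzero for at most $2^d$ choices of $\ell$, since each coordinate contributes at most two admissible values of $\ell_j$. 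Hence the sum above has at most $2^d$ nonvanishing terms.

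Next I would bound each surviving term. Since $\widehat{g_\ell}(k)$ is a Fourier coefficient of the trigonometric polynomial $g_\ell$ on the torus equipped with the normalized measure, one has the elementary estimate $|\widehat{g_\ell}(k)|\le\|g_\ell\,|L_1(\mathbb{T}^d)\|$. Each such $L_1$-norm is in turn dominated by the supremum defining the $S^0_{1,\infty}B$-norm, namely $\|g_\ell\,|L_1(\mathbb{T}^d)\|\le\sup_{\ell\in\mathbb{N}_0^d}\|g_\ell\,|L_1(\mathbb{T}^d)\|=\|g\,|S^0_{1,\infty}B(\mathbb{T}^d)\|$ (the weight $2^{|\ell|_1\cdot 0}$ is $1$ here because $t=0$). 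Combining these two bounds with the fact that at most $2^d$ terms appear yields
\[
|\widehat g(k)|\le\sum_{\ell}|\widehat{g_\ell}(k)|\le 2^d\,\|g\,|S^0_{1,\infty}B(\mathbb{T}^d)\|\,,
\]
which is precisely the claimed inequality, and taking the supremum over $k$ gives the embedding into $\mathcal B(\mathbb{T}^d)$.

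The only delicate point is the combinatorial factor $2^d$: one must argue carefully that for each coordinate the univariate partition $(\varphi_\ell)_{\ell\ge 0}$ overlaps in at most two consecutive levels at any fixed frequency, so the product over the $d$ coordinates is supported on a box of at most $2\times\cdots\times 2=2^d$ index vectors $\ell$. This is where the dimension-dependent constant $2^d$ enters, and it is the reason, as the authors remark, that the explicit control of the $d$-dependence is lost in the subsequent application. Everything else is a routine chain of the triangle inequality and the trivial bound $|\widehat h(k)|\le\|h\,|L_1\|$ for the normalized torus.
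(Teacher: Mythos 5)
Your proposal is correct and follows essentially the same route as the paper: both isolate the at most $2^d$ dyadic blocks $g_\ell$ whose coefficients $\prod_{j}\varphi_{\ell_j}(k_j)$ do not vanish at the fixed frequency $k$ (the paper writes this box explicitly as $\prod_{j}\bigl(\varphi_{\ell_j}(k_j)+\varphi_{\ell_j+1}(k_j)\bigr)=1$, you count the surviving terms of the full partition of unity), and then apply $|\widehat{g_\ell}(k)|\le\|g_\ell\,|L_1(\mathbb{T}^d)\|\le\|g\,|S^0_{1,\infty}B(\mathbb{T}^d)\|$ to each. No gaps.
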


\begin{proof}
For every $k\in \mathbb{Z}^d$ there exists an $ \ell=(\ell_1,\ldots,\ell_d) \in \mathbb{N}_0^d$ such that
\[
\prod_{j=1}^d \Big(\varphi_{\ell_j} (k_j) + \varphi_{\ell_{j}+1} (k_j)\Big) = 1 \, .
\] 
Using the abbreviation
\[
 g_\ell (x) := \sum_{k \in \mathbb{Z}^d} \Big(\prod_{j=1}^d \varphi_{\ell_j} (k_j)\Big) \, \widehat{g} (k)\, e^{i 2\pi kx}
\]
this implies, independently of the chosen decomposition of unity,
\begin{eqnarray*}
|\widehat{g}(k)| & = & \Big|\sum_{u \in \{0,1\}^d} \widehat{g}_{\ell +  u} (k)\Big|
\le  \sum_{u \in \{0,1\}^d} \Big| (2\pi)^{-d} \, \int_{\mathbb{T}^d} e^{-i kx} \,  g_{\ell + u} (x)\,  dx\Big| 
\\
 & \le &   \sum_{u \in \{0,1\}^d}  \| \, g_{\ell + u} \, |L_1 (\mathbb{T}^d)\|
\\
& \le &  2^d\,  \| \, g \, |S^0_{1,\infty} B(\mathbb{T}^d)\| \,. 
\end{eqnarray*}
\end{proof}

\begin{lem}\label{Lemma 10}
For all $\mu \in \mathcal{M}(\mathbb{T}^d)$ we have 
\[
\|\,  \mu \,|S^0_{1,\infty}B (\mathbb{T}^d)\| \le  \, \Big(\frac 1{2\pi}\,\max_{j=0,1} \|\, \mathcal{F}^{-1} \varphi_{j} \, |L_1 (\R)\|\Big)^d
\|\,  \mu \,| \mathcal{M}(\mathbb{T}^d)\|\, , 
\] 
i.e. there is a continuous embedding $ \mathcal{M}(\mathbb{T}^d)\hookrightarrow S^0_{1,\infty}B (\mathbb{T}^d)$.
\end{lem}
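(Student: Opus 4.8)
The plan is to represent each dyadic building block of $\mu$ as a convolution with an explicit tensor-product kernel, and then to estimate the $L_1$-norm of that kernel dimension by dimension via the Poisson summation formula. Fix $\mu\in\mathcal{M}(\T^d)$ and $\ell=(\ell_1,\dots,\ell_d)\in\mathbb{N}_0^d$, and introduce the one-dimensional kernels $V_m(t):=\sum_{k\in\Z}\varphi_m(k)\,e^{ikt}$ together with their tensor product $K_\ell(x):=\prod_{j=1}^d V_{\ell_j}(x_j)=\sum_{k\in\Z^d}\prod_{j=1}^d\varphi_{\ell_j}(k_j)\,e^{ikx}$. Since each $\varphi_m$ has compact support, $V_m$ and $K_\ell$ are trigonometric polynomials. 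First I would check, by comparing Fourier coefficients, that the $\ell$-th block of $\mu$ equals the convolution $\mu*K_\ell$, where on the torus (equipped with the normalized measure) I write $(\mu*K_\ell)(x)=(2\pi)^{-d}\int_{\T^d}K_\ell(x-y)\,d\mu(y)$; indeed a short computation gives $\widehat{\mu*K_\ell}(k)=\widehat\mu(k)\prod_j\varphi_{\ell_j}(k_j)$.

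The second step is Young's inequality adapted to the convolution of a measure with a function. Bounding $|(\mu*K_\ell)(x)|\le (2\pi)^{-d}\int_{\T^d}|K_\ell(x-y)|\,d|\mu|(y)$ by the total variation measure $|\mu|$ and interchanging the order of integration via Fubini yields
\[
\Big\|\sum_{k\in\Z^d}\prod_{j=1}^d\varphi_{\ell_j}(k_j)\,\widehat\mu(k)\,e^{ikx}\,\Big|L_1(\T^d)\Big\|
=\|\mu*K_\ell\,|L_1(\T^d)\|
\le (2\pi)^{-d}\,\|\mu\,|\mathcal{M}(\T^d)\|\cdot\|K_\ell\,|L_1(\T^d)\|,
\]
and, since $K_\ell$ is a tensor product, Fubini also factorizes its norm as $\|K_\ell|L_1(\T^d)\|=\prod_{j=1}^d\|V_{\ell_j}|L_1(\T)\|$.

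The heart of the argument is the one-dimensional bound $\|V_m|L_1(\T)\|\le\|\mathcal{F}^{-1}\varphi_m|L_1(\R)\|$ (normalized torus measure). Writing $h:=\mathcal{F}^{-1}\varphi_m\in L_1(\R)$, which is Schwartz as $\varphi_m\in C_0^\infty$, and noting $\varphi_m(n)=\widehat h(n)$, the Poisson summation formula gives $V_m(t)=2\pi\sum_{k\in\Z}h(t+2\pi k)$, whence $\int_0^{2\pi}|V_m(t)|\,dt\le 2\pi\int_\R|h(t)|\,dt$, which is exactly the claimed normalized estimate. The only remaining point is that $\|\mathcal{F}^{-1}\varphi_m|L_1(\R)\|$ takes at most two values: because $\varphi_m(\cdot)=\varphi_1(2^{-(m-1)}\cdot)$ for $m\ge1$ and the $L_1$-norm of an inverse Fourier transform is invariant under dilation, one has $\|\mathcal{F}^{-1}\varphi_m|L_1(\R)\|=\|\mathcal{F}^{-1}\varphi_1|L_1(\R)\|$ for all $m\ge1$, so each factor is bounded by $\max_{j=0,1}\|\mathcal{F}^{-1}\varphi_j|L_1(\R)\|$. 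Inserting these bounds and taking the supremum over $\ell\in\mathbb{N}_0^d$ (the estimate being uniform in $\ell$), which is precisely the $S^0_{1,\infty}B(\T^d)$-norm since $t=0$ and $q=\infty$, produces the asserted constant $\big(\tfrac1{2\pi}\max_{j=0,1}\|\mathcal{F}^{-1}\varphi_j|L_1(\R)\|\big)^d$.

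I expect the main obstacle to be the bookkeeping of the normalization constants: one must keep the $(2\pi)^{-d}$ coming from the normalized convolution, the factor $2\pi$ produced by Poisson summation in each of the $d$ variables, and the normalization of the torus $L_1$-norm mutually consistent, so that they combine into exactly $(2\pi)^{-d}$ and no other power. A secondary technical point is the rigorous justification of both the Poisson summation step and the convolution identity for a general measure $\mu$ rather than an $L_1$-function, but this is routine since all the kernels involved are trigonometric polynomials.
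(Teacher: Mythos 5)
Your proposal is correct and follows essentially the same route as the paper's proof: represent the $\bar\ell$-th dyadic block of $\mu$ as a convolution with the kernel $\sum_{k}\prod_j\varphi_{\ell_j}(k_j)e^{ikx}$, apply the Poisson summation formula to identify the periodized kernel with $(2\pi)^d\sum_m\mathcal{F}^{-1}\bar\varphi_{\bar\ell}(\cdot+2\pi m)$, estimate via Fubini against the total variation of $\mu$, and use dilation invariance of $\Vert\mathcal{F}^{-1}\varphi_j|L_1\Vert$ to reduce to $j\in\{0,1\}$. The only (cosmetic) difference is that you factorize the kernel into one-dimensional pieces before invoking Poisson summation, whereas the paper runs the identical argument directly in $\mathbb{R}^d$.
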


\begin{proof}
By $\mathcal{F}$ we denote the Fourier transform on $\R^d$, $\mathcal{F}^{-1}$ is the inverse transform.
It will be normalized as follows
\[
\mathcal{F} f (\xi) :=  \int_{\mathbb{R}^d}f(x)\, e^{- i x \xi}\, dx\, , \qquad \xi \in \mathbb{R}^d\, .
\]
Clearly, 
\[
\widehat{\mu} (k)= (2\pi)^{-d}\, \int_{\T^d} e^{ikx}\, d\mu\, , \qquad k \in \Z^d\, . 
\]
Let 
\[
\bar{\varphi}_{\bar{\ell}} (x) := \prod_{j =1}^d \varphi_{\ell_j} (x_j)\, , \qquad x=(x_1, \ldots, x_d)\, , \quad 
\bar{\ell}=(\ell_1, \ldots, \ell_d) \, .
\]
Because of 
\[
\sum_{k\in \Z^d} \bar{\varphi}_{\bar{\ell}} (k)\, \widehat{\mu}(k)e^{i k x}  = (2\pi)^{-d}\, 
\int_{\mathbb{T}^d} \Big(\sum_{k\in \Z^d} \bar{\varphi}_{\bar{\ell}} (k)\, e^{i k (x-y)}  \Big) \, d\mu (y)
\] 
and 
\[
\sum_{k\in \Z^d} \bar{\varphi}_{\bar{\ell}} (k)\, e^{i kz} = (2\pi)^d \, \sum_{m \in \Z^d} \mathcal{F}^{-1} \bar{\varphi}_{\bar{\ell}} (z + 2 \pi m)
\]
(Poisson's summation formula) we conclude
\begin{eqnarray*}
\Big\|\sum_{k\in \Z^d} && \hspace{-0.7cm} \bar{\varphi}_{\bar{\ell}} (k)\, \widehat{\mu}(k)e^{i k x} \, \Big|L_1 (\T^d)\Big\| 
\\
& = &
\Big\|\,  \int_{\mathbb{T}^d} \sum_{m \in \Z^d} \mathcal{F}^{-1} \bar{\varphi}_{\bar{\ell}} (x-y + 2 \pi m) \, d\mu(y)\, \Big|L_1 (\T^d)\Big\|
\\
& \le & (2\pi)^{-d} \, \sum_{m \in \Z^d}  \int_{\mathbb{T}^d} \int_{\mathbb{T}^d}  |\mathcal{F}^{-1} \bar{\varphi}_{\bar{\ell}} (x-y + 2 \pi m)| \, dx \, d\mu (y)
\\
& \le & (2\pi)^{-d} \,   \int_{\mathbb{T}^d} \int_{\R^d}  |\mathcal{F}^{-1} \bar{\varphi}_{\bar{\ell}} (x-y)| \, dx \, d\mu (y)
\\
& \le &  (2\pi)^{-d}  \|\, \mathcal{F}^{-1} \bar{\varphi}_{\bar{\ell}} \, |L_1 (\R^d)\|  \, |\mu (\T^d)|\, .
\end{eqnarray*}
The homogeneity of the Fourier transform yields
\[
\|\, \mathcal{F}^{-1} \bar{\varphi}_{\bar{\ell}} \, |L_1 (\R^d)\| \le 
\max \Big(\|\, \mathcal{F}^{-1} \varphi_{0} \, |L_1 (\R)\|, \|\, \mathcal{F}^{-1} \varphi_{1} \, |L_1 (\R)\| \Big)^d 
\]
independently of $\bar{\ell}$. This proves the claim.
\end{proof}

By using the abbreviation
\[
c_\psi := \frac 1{2\pi}\,\max (\|\, \mathcal{F}^{-1} \varphi_{0} \, |L_1 (\R)\|, \|\, \mathcal{F}^{-1} \varphi_{1} \, |L_1 (\R)\| )
\]
Lemma \ref{Lemma 4},  Lemma \ref{Lemma 10} and the so-called ideal property of approximation numbers  
yield the following.

\begin{lem}\label{Lemma 11}
Let $F_d(w)$ be given by a weight $w$ satisfying $\sum_{k\in \mathbb{Z}^d} w(k)^{-2}<\infty$.
Then one has for all $n\in\mathbb{N}$ 
\begin{eqnarray*}
c_\psi^{-d}\,  a_n(I_d: \mathcal{M}(\mathbb{T}^d) \to F_d(1/w) ) 
& \le &  a_{n} (I_d :  S^0_{1,\infty} B(\mathbb{T}^d) \to F_d (1/w)) 
\\
& \le & 
2^d \, a_n(I_d: \mathcal{B}(\mathbb{T}^d) \to F_d(1/w) )\, .  
\end{eqnarray*} 
\end{lem}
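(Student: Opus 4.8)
The plan is to read off the result purely from the two embeddings established in Lemmas \ref{Lemma 4} and \ref{Lemma 10}, combined with the ideal (multiplicativity) property of approximation numbers, $a_n(RTS)\le \|R\|\,a_n(T)\,\|S\|$. No new hard analysis is required; the whole point is that both sandwiching spaces $\mathcal{B}(\mathbb{T}^d)$ and $\mathcal{M}(\mathbb{T}^d)$ are comparable to $S^0_{1,\infty}B(\mathbb{T}^d)$, so that the identity $I_d:S^0_{1,\infty}B(\mathbb{T}^d)\to F_d(1/w)$ can be squeezed between the two identity maps whose approximation numbers appear in the statement.

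First I would record the two embeddings as bounded identities on Fourier coefficients: Lemma \ref{Lemma 4} provides $J_1:S^0_{1,\infty}B(\mathbb{T}^d)\to\mathcal{B}(\mathbb{T}^d)$ with $\|J_1\|\le 2^d$, and Lemma \ref{Lemma 10} provides $J_2:\mathcal{M}(\mathbb{T}^d)\to S^0_{1,\infty}B(\mathbb{T}^d)$ with $\|J_2\|\le c_\psi^d$. Since all spaces live inside $D'(\mathbb{T}^d)$ and the target operators $I_d:\mathcal{B}(\mathbb{T}^d)\to F_d(1/w)$ and $I_d:\mathcal{M}(\mathbb{T}^d)\to F_d(1/w)$ are already known to be bounded (indeed compact, by Corollary \ref{Cor 3}), every composition below is again an identity map on Fourier coefficients and is well defined.

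For the upper bound I would factor $I_d:S^0_{1,\infty}B(\mathbb{T}^d)\to F_d(1/w)$ as the precomposition $\big(I_d:\mathcal{B}(\mathbb{T}^d)\to F_d(1/w)\big)\circ J_1$ and apply the ideal property, obtaining
$$
a_n\big(I_d:S^0_{1,\infty}B(\mathbb{T}^d)\to F_d(1/w)\big)\le \|J_1\|\,a_n\big(I_d:\mathcal{B}(\mathbb{T}^d)\to F_d(1/w)\big)\le 2^d\,a_n\big(I_d:\mathcal{B}(\mathbb{T}^d)\to F_d(1/w)\big).
$$
For the lower bound I would instead factor $I_d:\mathcal{M}(\mathbb{T}^d)\to F_d(1/w)$ as $\big(I_d:S^0_{1,\infty}B(\mathbb{T}^d)\to F_d(1/w)\big)\circ J_2$, so that the ideal property gives
$$
a_n\big(I_d:\mathcal{M}(\mathbb{T}^d)\to F_d(1/w)\big)\le \|J_2\|\,a_n\big(I_d:S^0_{1,\infty}B(\mathbb{T}^d)\to F_d(1/w)\big)\le c_\psi^d\,a_n\big(I_d:S^0_{1,\infty}B(\mathbb{T}^d)\to F_d(1/w)\big),
$$
and dividing by $c_\psi^d$ yields the left-hand inequality of the lemma.

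There is essentially no obstacle here: once the two embedding norms are in hand, the statement is a bookkeeping exercise. The only points needing a moment of care are getting the directions of the two factorizations right, each being a \emph{pre}composition so that the operator norm attaches to the embedding while the approximation number attaches to the identity into $F_d(1/w)$, and confirming that the intermediate maps are genuinely bounded so that the ideal property applies; the latter is guaranteed by the assumption $\sum_{k\in\mathbb{Z}^d} w(k)^{-2}<\infty$ through Corollary \ref{Cor 3}.
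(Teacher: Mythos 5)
Your argument is correct and is exactly the proof the paper intends: the paper gives no explicit proof of Lemma \ref{Lemma 11}, stating only that it follows from Lemma \ref{Lemma 4}, Lemma \ref{Lemma 10} and the ideal property of approximation numbers, and your two factorizations (through $\mathcal{B}(\mathbb{T}^d)$ for the upper bound and through $S^0_{1,\infty}B(\mathbb{T}^d)$ for the lower bound) with the norm bounds $2^d$ and $c_\psi^d$ are precisely that argument. The directions of both precompositions are handled correctly, so there is nothing to add.
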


Let $\sigma \in \mathbb{R}$. For the dominating mixed case 
the standard lifting operator is given by 
\[
J^\sigma_{\rm{mix}} : ~g \to \sum_{k \in \mathbb{Z}^d} \widehat{g} (k)\, \prod_{j=1}^d (1+|k_j|^2)^{-\sigma/2}\, e^{i kx}\, .
\]
It is obvious that $J^\sigma_{\rm{mix}}$ is an isometry from $H^{s,2}_{\text{mix}}(\mathbb{T}^d)$ onto 
$H^{s+\sigma,2}_{\rm{mix}}(\mathbb{T}^d)$.

\begin{lem}\label{Lemma 5}
Let $\sigma \in \mathbb{R}$, $1 \le p,q \le \infty$ and $t \in \mathbb{R}$. Then  $J^\sigma_{\rm{mix}}$ 
maps $S^t_{p,q}B(\T^d)$ isomorphically onto $S^{t+\sigma}_{p,q}B(\T^d)$. 
\end{lem}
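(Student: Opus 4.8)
The plan is to treat $J^\sigma_{\rm{mix}}$ as a Fourier multiplier and to reduce the isomorphism statement to a single boundedness estimate. Since $J^\sigma_{\rm{mix}}$ multiplies the $k$-th Fourier coefficient by $m_\sigma(k):=\prod_{j=1}^d(1+|k_j|^2)^{-\sigma/2}$, we have $J^\sigma_{\rm{mix}}\circ J^{-\sigma}_{\rm{mix}}=J^{-\sigma}_{\rm{mix}}\circ J^\sigma_{\rm{mix}}=\mathrm{id}$ on $D'(\T^d)$. Hence it suffices to prove that $J^\sigma_{\rm{mix}}:S^t_{p,q}B(\T^d)\to S^{t+\sigma}_{p,q}B(\T^d)$ is bounded for all $\sigma,t\in\R$ and all $1\le p,q\le\infty$; applying this with $-\sigma$ in place of $\sigma$ and $t+\sigma$ in place of $t$ then shows that $J^{-\sigma}_{\rm{mix}}$ is a bounded inverse, so $J^\sigma_{\rm{mix}}$ is an isomorphism onto. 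Writing $g_\ell(x):=\sum_{k\in\Z^d}\big(\prod_{j}\varphi_{\ell_j}(k_j)\big)\widehat g(k)e^{ikx}$ for the building blocks in \eqref{ws-28}, the multiplier commutes with the dyadic decomposition, so $(J^\sigma_{\rm{mix}}g)_\ell=T_{m_\sigma}g_\ell$, where $T_{m_\sigma}$ is the multiplier operator with symbol $m_\sigma$.

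The heart of the matter is the block estimate
\[
\big\|(J^\sigma_{\rm{mix}}g)_\ell\,\big|L_p(\T^d)\big\|\le C^d\,2^{-|\ell|_1\sigma}\,\big\|g_\ell\,\big|L_p(\T^d)\big\|
\]
with $C$ independent of $\ell$, $g$ and $p$. To obtain it I would factor the symbol on the support of the block as $m_\sigma(k)=2^{-|\ell|_1\sigma}\prod_{j}\beta_{\ell_j}(k_j)$ with $\beta_{\ell_j}(k_j):=2^{\ell_j\sigma}(1+|k_j|^2)^{-\sigma/2}$, and replace each $\beta_{\ell_j}$ by $\beta_{\ell_j}\cdot\widetilde\varphi_{\ell_j}$, where $\widetilde\varphi_{\ell_j}:=\varphi_{\ell_j-1}+\varphi_{\ell_j}+\varphi_{\ell_j+1}$ equals $1$ on $\mathrm{supp}\,\varphi_{\ell_j}$; this leaves the action on $g_\ell$ unchanged but makes each one-dimensional symbol smooth and compactly supported in an annulus of size $\sim 2^{\ell_j}$, on which it is comparable to $1$ together with its rescaled derivatives. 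Since $\beta_{\ell_j}$ depends only on $k_j$, the operator $T_{m_\sigma}$ restricted to the block factors as a composition of one-dimensional multiplier operators acting in the separate variables. Each of these is convolution (in one variable) with a periodic kernel $K_{\ell_j}$, and a convolution operator with kernel in $L_1(\T)$ is bounded on $L_p(\T;X)$ for every Banach space $X$, with norm at most $\|K_{\ell_j}\,|L_1(\T)\|$, by Minkowski's integral inequality. Applying this successively in the $d$ coordinates (viewing the remaining variables as the vector-valued target) yields the block estimate with $C:=\sup_{\ell_j\ge 0}\|K_{\ell_j}\,|L_1(\T)\|$.

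The main obstacle, and the only genuine analytic input, is the uniform bound $C<\infty$ on the $L_1$-norms of the one-dimensional periodic kernels $K_{\ell_j}$. This is precisely the type of estimate already carried out in the proof of Lemma \ref{Lemma 10}: by Poisson's summation formula $K_{\ell_j}$ is the periodization of $\mathcal F^{-1}(\beta_{\ell_j}\widetilde\varphi_{\ell_j})$, and the homogeneity of the Fourier transform together with the dyadic scaling of the symbol shows that $\|\mathcal F^{-1}(\beta_{\ell_j}\widetilde\varphi_{\ell_j})\,|L_1(\R)\|$ is bounded independently of $\ell_j$; hence $\|K_{\ell_j}\,|L_1(\T)\|$ is uniformly bounded as well. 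Granting this, I would finish by inserting the block estimate into the norm \eqref{ws-28}:
\[
\big\|J^\sigma_{\rm{mix}}g\,\big|S^{t+\sigma}_{p,q}B\big\|^q=\sum_{\ell}2^{|\ell|_1(t+\sigma)q}\big\|(J^\sigma_{\rm{mix}}g)_\ell\big\|^q\le C^{dq}\sum_{\ell}2^{|\ell|_1 tq}\big\|g_\ell\big\|^q=C^{dq}\big\|g\,\big|S^t_{p,q}B\big\|^q,
\]
with the obvious modification (suprema instead of sums) when $q=\infty$. This proves boundedness of $J^\sigma_{\rm{mix}}$, and the same argument applied to $J^{-\sigma}_{\rm{mix}}$ gives the reverse inequality, so that $J^\sigma_{\rm{mix}}$ maps $S^t_{p,q}B(\T^d)$ isomorphically onto $S^{t+\sigma}_{p,q}B(\T^d)$.
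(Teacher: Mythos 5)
Your argument is correct and is essentially the route the paper intends: the paper's proof is a one-line reference to the lifting theorem (Theorem 2.3.8 in Triebel's book), whose standard proof is exactly the reduction to a uniformly bounded Fourier multiplier on each dyadic block, which you carry out explicitly for the tensor-product decomposition via one-dimensional $L_1$-kernel bounds and Poisson summation. You have merely written out, in the dominating mixed setting, the details that the paper outsources to the cited isotropic argument.
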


\begin{proof}
One can follow the arguments used in the proof of Theorem 2.3.8 in \cite{Tr83} which describes the isotropic situation.
\end{proof}

Now we turn to estimates of the approximation
numbers of the embeddings $I_d:  S^t_{1,\infty} (\mathbb{T}^d) \to L_2 (\mathbb{T}^d)$.
Essentially as a consequence of Corollary \ref{Cor 1}, Lemma \ref{Lemma 11} and Lemma  \ref{Lemma 5}   we find

\begin{cor} \label{Corollary 11}
Let $d\in \mathbb{N}$ and  $t>1/2$. 
Then there exists two positive constants $A,B$ such that
\[
 A\,  \frac{(\ln n)^{(d-1)t}}{n^{t-1/2}}\le 
a_{n} (I_d :  S^t_{1,\infty} B(\mathbb{T}^d) \to L_2 (\mathbb{T}^d))
 \leq   B\,  \frac{(\ln n)^{(d-1)t}}{n^{t-1/2}}
\]
holds for all $n \in \N$.
\end{cor}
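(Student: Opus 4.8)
The plan is to peel off the smoothness $t$ by a lifting, turning the embedding $I_d:S^t_{1,\infty}B(\T^d)\to L_2(\T^d)$ into an embedding of a zero-smoothness Besov space of dominating mixed type into the Hilbert space $F_d(1/w)$, and then to feed the latter into Lemma \ref{Lemma 11} and Corollary \ref{Cor 3}, which already express such approximation numbers through the well-understood quantity $a_n(I_d:F_d(w)\to\ca(\T^d))$.

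First I would fix the weight $w(k)=\prod_{j=1}^d(1+|k_j|^2)^{t/2}$, so that $F_d(w)=H^{t,2}_{\rm{mix}}(\T^d)$ and the Fourier symbol of the lifting $J^t_{\rm{mix}}$ from Lemma \ref{Lemma 5} is exactly $1/w(k)$. By Lemma \ref{Lemma 5}, $J^t_{\rm{mix}}:S^0_{1,\infty}B(\T^d)\to S^t_{1,\infty}B(\T^d)$ is an isomorphism. The multiplier $M$ with symbol $1/w$ is an isometric isomorphism $F_d(1/w)\to L_2(\T^d)$, and a direct comparison of Fourier coefficients shows $I_d\circ J^t_{\rm{mix}}=M\circ I_d'$, where $I_d':S^0_{1,\infty}B(\T^d)\to F_d(1/w)$ is the identity embedding. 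Since approximation numbers are unchanged under composition with an isometric isomorphism and multiplicative under composition with an isomorphism, I obtain $a_n(I_d:S^t_{1,\infty}B\to L_2)\thicksim a_n(I_d':S^0_{1,\infty}B\to F_d(1/w))$ with constants depending only on $\|J^{t}_{\rm{mix}}\|$ and $\|J^{-t}_{\rm{mix}}\|$.

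Next, since $t>1/2$ is precisely condition \eqref{F5} guaranteeing $\sum_{k\in\Z^d}w(k)^{-2}<\infty$, both Lemma \ref{Lemma 11} and Corollary \ref{Cor 3} apply to this $w$. Lemma \ref{Lemma 11} sandwiches $a_n(I_d':S^0_{1,\infty}B\to F_d(1/w))$ between $c_\psi^{-d}\,a_n(I_d:\mathcal M(\T^d)\to F_d(1/w))$ and $2^d\,a_n(I_d:\mathcal B(\T^d)\to F_d(1/w))$, while Corollary \ref{Cor 3} identifies both of these with $a_n(I_d:F_d(w)\to\ca(\T^d))=a_n(I_d:H^{t,2}_{\rm{mix}}(\T^d)\to\ca(\T^d))$. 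Thus the entire question collapses to the $\ca$-approximation numbers of the mixed Sobolev space, for which the two-sided estimate $a_n\thicksim (\ln n)^{(d-1)t}/n^{t-1/2}$ is already available from Corollary \ref{Cor 12}, noting that $a_n(I_d:H^{t,2}_{\rm{mix}}\to\ca)=a_n(I_d:H^{t,2}_{\rm{mix}}\to L_\infty)$ by Corollary \ref{Cor 10} and Theorem \ref{Theorem 2}. Collecting the finite factors $c_\psi^{-d}$, $2^d$ and the lifting norms, all of which depend only on $d$ and $t$, produces the desired constants $A,B$.

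The main obstacle I anticipate is the bookkeeping of the second paragraph: one must correctly recognize $1/w$ as the symbol of $J^t_{\rm{mix}}$, check that $M$ is a surjective isometry, and arrange the multiplicativity of $a_n$ so that the reduction to $F_d(1/w)$ is genuinely two-sided. A secondary point is that Corollary \ref{Cor 12} is stated only for large $n$, so one needs the routine remark that, the embedding being injective with infinite-dimensional range, $a_n>0$ for every $n$ and the finitely many small indices can be absorbed into $A,B$; this also makes clear that for $d\ge 2$ the inequality is meaningful only for $n\ge 2$, since the factor $(\ln n)^{(d-1)t}$ vanishes at $n=1$.
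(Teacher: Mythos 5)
Your proposal is correct and follows essentially the same route as the paper: lift by $J^{\pm t}_{\rm{mix}}$ to reduce to $a_n(S^0_{1,\infty}B(\mathbb{T}^d)\to H^{-t,2}_{\rm{mix}}(\mathbb{T}^d))$, then apply Lemma \ref{Lemma 11}, the duality Corollary \ref{Cor 3}, and the two-sided bounds of Corollary \ref{Cor 12}; the paper merely writes the lifting step as two commutative diagrams rather than your single identity $I_d\circ J^t_{\rm{mix}}=M\circ I_d'$. Your closing remark about absorbing finitely many small $n$ and the degeneracy of the bound at $n=1$ for $d\ge 2$ is a legitimate point the paper leaves implicit.
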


\begin{proof}
{\em Step 1.} Estimate from above.
Observe that 
$(H^{t,2}_{\rm{mix}}(\mathbb{T}^d))' = H^{-t,2}_{\rm{mix}}(\mathbb{T}^d)$ with equality of norms.
We use the following commutative diagram.
\begin{center}
\setlength{\unitlength}{.7mm}
\begin{picture}(100,50)
\linethickness{.5pt} \put(-7,10){$S^0_{1,\infty} B(\mathbb{T}^d)$} \put(-7,40){$S^t_{1,\infty} B(\mathbb{T}^d)$}
\put(63,10){$H^{-t,2}_{\rm{mix}} (\mathbb{T}^d)$} \put(63,40){$L_2 (\mathbb{T}^d)$}

\put(18,42){\vector(1,0){44}} \put(12,35){\vector(0,-1){18}}
\put(19,12){\vector(1,0){43}} \put(67,17){\vector(0,1){20}}

\put(68,25){$J_{\rm{mix}}^t$} \put(40,45){$I_d$} \put(0,25){$J^{-t}_{\rm{mix}}$}
\put(40,15){$i_d$}

\end{picture}
\end{center}
Then the ideal property of  the approximation numbers yields
\begin{eqnarray*}
a_n(I_d: && \hspace*{-0.7cm} S^t_{1,\infty} B(\mathbb{T}^d) \to L_2(\mathbb{T}^d)) \le 
a_n (i_d : S^0_{1,\infty} B(\mathbb{T}^d) \to H^{-t,2}_{\rm{mix}}(\mathbb{T}^d)) 
\\
& \times  & \| J_{\rm{mix}}^r \, | H^{-t,2}_{\rm{mix}} (\mathbb{T}^d) \to L_2 (\mathbb{T}^d) \| \, 
\| J_{\rm{mix}}^{-t}\, | S^0_{1,\infty} B(\mathbb{T}^d) \to S^{-t}_{1,\infty} B(\mathbb{T}^d)  \|\,  
\\
& = & \, a_n (i_d : S^0_{1,\infty}B (\mathbb{T}^d) \to H^{-t,2}_{\rm{mix}}(\mathbb{T}^d))\, 
\| J_{\rm{mix}}^{-t}\, | S^0_{1,\infty} B(\mathbb{T}^d) \to S^{-t}_{1,\infty} B(\mathbb{T}^d)  \|  ,
\end{eqnarray*}
where we used Lemma \ref{Lemma 5} and $ \| J_{\rm{mix}}^t : H^{-t,2}_{\rm{mix}}(\mathbb{T}^d) \to L_2(\mathbb{T}^d) \| = 1$.
From Lemma \ref{Lemma 11} we conclude
\[
a_n (i_d : S^0_{1,\infty} B(\mathbb{T}^d) \to H^{-t,2}_{\rm{mix}}(\mathbb{T}^d))
\le 
2^d \, a_n (i_d : \mathcal{B}(\mathbb{T}^d) \to H^{-t,2}_{\rm{mix}}(\mathbb{T}^d))\, .
\]
Finally, applying 
Corollary \ref{Cor 3}, Corollary \ref{Cor 6} and Corollary \ref{Cor 12}, we obtain
\begin{eqnarray*}
a_n (i_d : \mathcal{B}(\mathbb{T}^d) \to H^{-t,2}(\mathbb{T}^d))  & = &  
a_n (i_d : H^{t,2}(\mathbb{T}^d) \to \ca (\mathbb{T}^d))
\\
&\le & C \, \frac{(\ln n)^{(d-1)t}}{n^{t-1/2}}\, , 
\end{eqnarray*}
where $C=C(d,t)$ is independent of $n$.\\
{\em Step 2.} Estimate from below.
Now we have the commutative diagram
\begin{center}
\setlength{\unitlength}{.7mm}
\begin{picture}(100,50)
\linethickness{.5pt} \put(-7,10){$S^t_{1,\infty} B(\mathbb{T}^d)$} \put(-7,40){$S^0_{1,\infty} B(\mathbb{T}^d)$}
\put(63,10){$L_{2} (\mathbb{T}^d)$} \put(63,40){$H^{-t,2}_{\rm{mix}} (\mathbb{T}^d)$}

\put(18,42){\vector(1,0){44}} \put(12,35){\vector(0,-1){18}}
\put(19,12){\vector(1,0){43}} \put(67,17){\vector(0,1){20}}

\put(68,25){$J_{\rm{mix}}^{-t}$} \put(40,45){$i_d$} \put(0,25){$J^{t}_{\rm{mix}}$}
\put(40,15){$I_d$}

\end{picture}
\end{center}
As in Step 1 we conclude
\begin{eqnarray*}   
a_n(i_d: && \hspace*{-0.7cm} S^0_{1,\infty} B(\mathbb{T}^d) \to H^{-t,2}_{\rm{mix}} (\mathbb{T}^d)) \le 
a_n (I_d : S^t_{1,\infty} B(\mathbb{T}^d) \to L_{2}(\mathbb{T}^d)) 
\\
& \times  & \| J_{\rm{mix}}^{-t} \, | L_2 (\mathbb{T}^d) \to H^{-t,2}_{\rm{mix}} (\mathbb{T}^d)  \| \, 
\| J_{\rm{mix}}^{t}\, | S^0_{1,\infty} B(\mathbb{T}^d) \to S^{t}_{1,\infty} B(\mathbb{T}^d)  \|\,  
\\
& = & \, 
a_n (I_d : S^t_{1,\infty} B(\mathbb{T}^d) \to L_{2}(\mathbb{T}^d)) 
\, \| J_{\rm{mix}}^{t}\, | S^0_{1,\infty} B(\mathbb{T}^d) \to S^{t}_{1,\infty} B(\mathbb{T}^d)\| \, .
\end{eqnarray*}
The estimate from below is completed by taking into account Lemma \ref{Lemma 11},
Corollary \ref{Cor 3}, Corollary \ref{Cor 6} and Corollary \ref{Cor 12}.
\end{proof}

\begin{rmk}
 \rm
(i) Corollary \ref{Corollary 11}  seems to be partly a novelty.
Our results remove the technical restriction $r>1$ which appeared 
in the previous contributions by Temlyakov \cite{Tem86} and Romanyuk \cite{Rom1}, 
see also Romanyuk \cite{Rom2}.
Notice that $r>1/2$ is the optimal restriction since
\[
S^r_{1,\infty} B(\mathbb{T}^d) \hookrightarrow  L_2 (\mathbb{T}^d) \qquad \Longleftrightarrow 
\qquad r > 1/2\, .
\]
In fact, Temlyakov \cite{Tem86} and Romanyuk \cite{Rom1} investigated  Kolmogorov numbers of the operator
$I_d :  S^r_{1,\infty} B(\mathbb{T}^d) \to L_2 (\mathbb{T}^d)$, 
but for operators mapping a Banach space into a \emph{Hilbert space}, 
approximation numbers and Kolmogorov numbers coincide, see \cite[Proposition~11.6]{P1}. 
\\
(ii) 
Of course, $A$ and $B$ depend on $d$. In the proof of Corollary \ref{Corollary 11} we have full control about the behaviour of the constants 
except when dealing with the lifting operator.
Of course, one can give estimates of    
\[
\| J_{\rm{mix}}^{-t}\, | S^0_{1,\infty} B(\mathbb{T}^d) \to S^{-t}_{1,\infty} B(\mathbb{T}^d)  \| \qquad \mbox{and}
\qquad \| J_{\rm{mix}}^{t}\, | S^0_{1,\infty} B(\mathbb{T}^d) \to S^{t}_{1,\infty} B(\mathbb{T}^d) \|
\]
in dependence on $d$. However, it is not clear whether this method results in good constants.
So we omit details.
\end{rmk}


\subsection{Approximation in the $L_p$-norm}\label{Section 4.5}


Since we are able to control the behaviour of the approximation numbers 
$a_n(I_d: F_d(w)\to L_\infty (\mathbb{T}^d))$
and $a_n(I_d: F_d(w)\to L_2 (\mathbb{T}^d))$, it is quite natural to ask if this can also be done for
approximation in the $L_p$-norm, $2 < p< \infty$.
Accordingly, we finish the paper with some upper estimates in the $L_p$-norm.

\begin{prop}\label{Prop 8}
Let $2<p<\infty$, define $r$ by $1/r=1/2-1/p$, assume that $F_d(w)$ is given by weights $w(k)$ satisfying 
$\sum_{k\in\Z^d} w(k)^{-r}<\infty$. Moreover,
let $(\sigma_j)_{j\in\mathbb{N}}$ denote the non-increasing rearrangement of $(1/w(k))_{k\in\mathbb{Z}^d}$. 
Then one has for all $n\in\N$ 
\[
a_n(I_d: F_d(w)\to L_p (\mathbb{T}^d)) \le \Big( \sum_{j=n}^\infty \sigma_j^r \Big)^{1/r}\,.
\]
\end{prop}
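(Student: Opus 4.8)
The plan is to factor the embedding $I_d : F_d(w) \to L_p(\mathbb{T}^d)$ through a diagonal operator on sequence spaces, exactly as in the proof of Theorem \ref{Theorem 1}, and then invoke the known formula for the approximation numbers of diagonal operators between $\ell_2$ and $\ell_p$. First I would recall the isometry $A : F_d(w) \to \ell_2(\mathbb{Z}^d)$, $Af = (w(k)\widehat{f}(k))_{k\in\mathbb{Z}^d}$, whose existence is guaranteed by the very definition \eqref{ws-06} of $F_d(w)$. The key new ingredient replacing the map $B : \ell_1 \to \ca$ is the Hausdorff--Young inequality: for $2<p<\infty$, the operator $B : \ell_{p'}(\mathbb{Z}^d) \to L_p(\mathbb{T}^d)$ sending a sequence $(\xi_k)$ to the function $\sum_k \xi_k e^{ikx}$ is bounded with norm one, where $p'$ is the conjugate exponent. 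One checks that $p' = r'$ in the notation of the statement, since $1/r = 1/2 - 1/p$ gives $1/r' = 1/2 + 1/p = 1 - 1/p = 1/p'$. This supplies the analogue of the commutative diagram, now with $L_p$ and $\ell_{p'}$ in the lower-right corner.

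The remaining step is to identify the intermediate diagonal operator. I would set $D : \ell_2(\mathbb{Z}^d) \to \ell_{p'}(\mathbb{Z}^d)$, $D\xi = (\xi_k/w(k))_k$; the summability hypothesis $\sum_k w(k)^{-r} < \infty$ is precisely the condition ensuring $D$ is well-defined and compact. Writing $I_d = B \circ D \circ A$ and using the multiplicativity of approximation numbers together with $\|A\| = \|B\| = 1$, I obtain
\[
a_n(I_d : F_d(w) \to L_p(\mathbb{T}^d)) \le a_n(D : \ell_2(\mathbb{Z}^d) \to \ell_{p'}(\mathbb{Z}^d)).
\]
As in Theorem \ref{Theorem 1}, the rearrangement of $(1/w(k))_k$ into the non-increasing sequence $(\sigma_j)_j$ turns $D$ into the diagonal operator $D_\sigma : \ell_2 \to \ell_{p'}$ with entries $\sigma_j$, and this rearrangement does not change the approximation numbers.

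The crux is therefore the exact value of $a_n(D_\sigma : \ell_2 \to \ell_{p'})$. I would appeal to the Pietsch formula for approximation (equivalently Gelfand/Kolmogorov) numbers of diagonal operators $\ell_2 \to \ell_q$ for $q \ge 2$, found in \cite[Theorem 11.11.4]{P1}, which gives precisely
\[
a_n(D_\sigma : \ell_2 \to \ell_{p'}) = \Big(\sum_{j=n}^\infty \sigma_j^{\,r}\Big)^{1/r},
\]
the relevant tail exponent being $r$, determined by $1/r = 1/p' - 1/2 = 1/2 - 1/p$ in the regime $2 < p' $... here I must be careful: since $2<p<\infty$ forces $1<p'<2$, the target $\ell_{p'}$ has a \emph{smaller} exponent than $\ell_2$, and the correct citation is the formula for $a_n(\ell_2 \to \ell_q)$ with $q \le 2$, where the tail index works out to the stated $r$ with $1/r = 1/2 - 1/p = 1/2 - 1/p'\,'$. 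Verifying that the exponent extracted from \cite[Theorem 11.11.4]{P1} is exactly this $r$ is the one place demanding genuine care, and it is the main obstacle; the special cases $p=\infty$ (Theorem \ref{Theorem 2}, $r=2$) and the general diagonal estimate must be reconciled so that the tail sum carries the exponent $r$ rather than $p'$ or $2$. Once that identification is confirmed, combining the displayed inequalities yields the asserted bound for all $n\in\mathbb{N}$, completing the proof.
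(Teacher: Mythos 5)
Your proposal is correct and follows essentially the same route as the paper: factor $I_d = B\circ D\circ A$ through $\ell_2(\mathbb{Z}^d)\to\ell_{p'}(\mathbb{Z}^d)$, bound $\|B:\ell_{p'}(\mathbb{Z}^d)\to L_p(\mathbb{T}^d)\|\le 1$ (the paper obtains this by complex interpolation between the $\ell_1\to L_\infty$ and $\ell_2\to L_2$ endpoints, which is exactly the Hausdorff--Young inequality you invoke), and apply Pietsch's formula for diagonal operators with tail exponent $1/r = 1/p' - 1/2 = 1/2 - 1/p$, which your final paragraph correctly pins down. One harmless slip: the parenthetical claim that $p' = r'$ is false except when $p=4$ (for $p=3$ one has $r'=6/5$ but $p'=3/2$); fortunately you never use it, since your diagram's intermediate space is $\ell_{p'}$ and the exponent in the tail sum is $r$, both of which are the right choices.
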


\begin{proof}
Let the operators $A,B$ and $D$ be defined as in the proof of Theorem \ref{Theorem 1}.
We consider the following commutative diagram.

\begin{center}
\setlength{\unitlength}{.7mm}
\begin{picture}(100,50)
\linethickness{.5pt} \put(5,10){$\ell_2 (\mathbb{Z}^d)$} \put(3,40){$F_d(w)$}
\put(63,10){$\ell_{p'} (\mathbb{Z}^d)$} \put(63,40){$L_p (\mathbb{T}^d)$}

\put(18,42){\vector(1,0){44}} \put(12,35){\vector(0,-1){18}}
\put(19,12){\vector(1,0){43}} \put(67,17){\vector(0,1){20}}

\put(68,25){$B$} \put(40,45){$I_d$} \put(5,25){$A$}
\put(40,15){$D$}

\end{picture}
\end{center}
To make sure that this diagram really makes sense, we have to check the mapping properties of the 
operators $B$ and $D$. Concerning $B$, we know already that
$$
\|B:\ell_2(\mathbb{Z}^d)\to L_2(\mathbb{T}^d)\|=\|B:\ell_1(\mathbb{Z}^d)\to L_\infty(\mathbb{T}^d)\|=1\,.
$$
Complex interpolation of $L_p$-spaces (see e.g. \cite[section 5.1]{BL} or \cite[subsection 1.18.4]{TrI}) yields 
$$
\|B: \ell_{p'}(\mathbb{Z}^d) \to  L_p(\mathbb{T}^d) \| \le 1\,,
$$ 
where the conjugate index $p'$ of $p$ is defined by $\frac 1p+\frac 1{p'}=1$.

Now we turn to $D$. Since $1/2+1/r=1-1/p=1/{p'}$ and $\big(1/w(k)\big)_{k\in\Z^d}\in\ell_r(\Z^d)$, H\"older's 
inequality implies that $D$ maps $\ell_2(\Z^d)$ into $\ell_{p'}(\Z^d)$. 
Moreover we have
\[
a_n(D: \ell_2 (\mathbb{Z}^d) \to \ell_{p'} (\mathbb{Z}^d))= \Big( \sum_{j=n}^\infty \sigma_j^r \Big)^{1/r}
\]
where $1/r = 1/p'-1/2 = 1/2 - 1/p$, see \cite[Theorem~11.11]{P1}. By the multiplicativity of the 
approximation numbers, the desired estimate follows.
\end{proof}

The following technical lemma can be proved parallel to Proposition \ref{Corollary}.

\begin{lem}\label{Corollaryb}
Let $2 < p < \infty$ and put $1/r = 1/2 - 1/p$, let $\alpha > 1/r$, $C>0$ and $N\in \mathbb{N}$. Assume that 
\[
a_n (I_d:F_d(w) \to  L_2 (\mathbb{T}^d))\leq C\, n^{-\alpha}\quad \text{for all } \, n \geq N.
\]
Then it follows that
\[
  a_{n+1}(I_d:F_d(w) \to  L_p (\mathbb{T}^d)) \leq
       \frac{C}{(\alpha r-1)^{1/r}} \, n^{1/r - \alpha}\quad \text{for all } \, n \geq N.
\]
\end{lem}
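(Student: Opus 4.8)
The plan is to mimic the proof of Proposition~\ref{Corollary}, but now using the $L_p$-version of the upper bound provided by Proposition~\ref{Prop 8} in place of the $L_\infty$-bound coming from Theorem~\ref{Theorem 2}. Recall that $\sigma_j = a_j(I_d:F_d(w)\to L_2(\mathbb{T}^d))$, so the hypothesis reads $\sigma_j \le C\, j^{-\alpha}$ for all $j\ge N$.

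First I would apply Proposition~\ref{Prop 8} at index $n+1$ to obtain $a_{n+1}(I_d:F_d(w)\to L_p(\mathbb{T}^d))^r \le \sum_{j=n+1}^\infty \sigma_j^r$. Then, for $n\ge N$ every index $j\ge n+1$ satisfies the standing assumption, so I may insert $\sigma_j^r \le C^r j^{-\alpha r}$, which gives $a_{n+1}^r \le C^r \sum_{j=n+1}^\infty j^{-\alpha r}$.

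The only quantitative step is the comparison of this series with an integral. Since $\alpha > 1/r$ forces $\alpha r > 1$, the function $x\mapsto x^{-\alpha r}$ is decreasing on $(0,\infty)$, and hence $\sum_{j=n+1}^\infty j^{-\alpha r} \le \int_n^\infty x^{-\alpha r}\,dx = \frac{n^{1-\alpha r}}{\alpha r - 1}$. Substituting this estimate and taking $r$-th roots, while using $(1-\alpha r)/r = 1/r - \alpha$, yields exactly $a_{n+1} \le \frac{C}{(\alpha r - 1)^{1/r}}\, n^{1/r - \alpha}$ for all $n\ge N$, as claimed.

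There is no genuine obstacle here: the condition $\alpha r > 1$ is precisely what guarantees convergence of the tail and its domination by the integral, and everything else is a one-line substitution. In fact the argument is simpler than that of Proposition~\ref{Corollary}, since the absence of a logarithmic factor (the case $\beta = 0$) removes both the need to control the ratio $(\log(nt)/\log n)^{2\beta}$ and the extra lower threshold $e^{4\beta/(2\alpha-1)}$ on $n$; the estimate therefore holds for all $n\ge N$ without further restriction.
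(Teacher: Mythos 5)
Your argument is correct and is precisely the ``parallel to Proposition~\ref{Corollary}'' proof the paper intends: apply Proposition~\ref{Prop 8} at index $n+1$, insert $\sigma_j\le C\,j^{-\alpha}$, and dominate the tail sum by $\int_n^\infty x^{-\alpha r}\,dx=n^{1-\alpha r}/(\alpha r-1)$. The only detail worth stating explicitly is that the hypothesis $\sum_{k\in\Z^d}w(k)^{-r}<\infty$ required by Proposition~\ref{Prop 8} is itself a consequence of the assumed decay $\sigma_j\le C\,j^{-\alpha}$ together with $\alpha r>1$, so that proposition is indeed applicable.
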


Combining Lemma \ref{Corollaryb}, Proposition \ref{Prop 8} and Proposition \ref{Prop 2}
we can supplement Corollary \ref{Cor 1} as follows.

\begin{cor}\label{Cor 12b}
Let  $d\in \mathbb{N}$, $2 < p< \infty$ and $s>d\big(\frac 12 - \frac 1p\big)$.
Then we have for 
$n\geq 9^d e^{d/2}$ the estimate
$$
a_{n+1} (I_d: H^{s,2} (\mathbb{T}^d) \to  L_p(\mathbb{T}^d))
\leq
\frac{1}{(\alpha r-1 )^{1/r}}
 \Big(\frac{32e}{d}\Big)^{s/2}\, n^{1/2 - 1/p - s/d}
$$
where $1/r = 1/2 - 1/p$ and $\alpha=s/d$.
\end{cor}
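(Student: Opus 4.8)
The plan is to apply the transfer Lemma~\ref{Corollaryb} to the $L_2$-upper bound for the isotropic Sobolev space that is already recorded in Proposition~\ref{Prop 2}; the whole argument amounts to matching parameters. First I would invoke the upper estimate of Proposition~\ref{Prop 2},
\[
a_n(I_d: H^{s,2}(\mathbb{T}^d)\to L_2(\mathbb{T}^d)) \le \Big(\frac{32e}{d}\Big)^{s/2} n^{-s/d}\qquad\text{for } n\ge 9^d e^{d/2},
\]
and read off the data to be fed into Lemma~\ref{Corollaryb}: take $C=(32e/d)^{s/2}$, $\alpha=s/d$ and $N=9^d e^{d/2}$, with $1/r=1/2-1/p$ as in the statement.

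The only hypothesis of Lemma~\ref{Corollaryb} that must be checked is $\alpha>1/r$. Since $\alpha=s/d$ and $1/r=1/2-1/p$, this reads $s/d>1/2-1/p$, which is exactly the standing assumption $s>d(1/2-1/p)$. Hence the lemma applies verbatim and gives
\[
a_{n+1}(I_d: H^{s,2}(\mathbb{T}^d)\to L_p(\mathbb{T}^d)) \le \frac{C}{(\alpha r-1)^{1/r}}\, n^{1/r-\alpha}\qquad\text{for } n\ge 9^d e^{d/2}.
\]
It then remains only to simplify the exponent, $1/r-\alpha=(1/2-1/p)-s/d=1/2-1/p-s/d$, which reproduces the asserted power of $n$, while inserting $C=(32e/d)^{s/2}$ yields the claimed constant $\frac{1}{(\alpha r-1)^{1/r}}(32e/d)^{s/2}$.

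There is essentially no genuine obstacle; the result is a direct specialization. The one point worth flagging is that the smoothness condition $s>d(1/2-1/p)$ plays a double role. Besides being the inequality $\alpha>1/r$ required by Lemma~\ref{Corollaryb}, it is precisely the condition $sr>d$ under which the weights $w_{s,2}(k)=(1+|k|_2^2)^{s/2}$ satisfy $\sum_{k\in\mathbb{Z}^d} w_{s,2}(k)^{-r}<\infty$, so that the embedding into $L_p(\mathbb{T}^d)$ underlying Proposition~\ref{Prop 8} is well defined. In fact this $\ell_r$-summability is already forced by the decay $\sigma_n\le C n^{-\alpha}$ with $\alpha r>1$, so it needs no separate verification when the lemma is invoked as a black box.
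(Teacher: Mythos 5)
Your proposal is correct and follows exactly the paper's route: the paper proves Corollary~\ref{Cor 12b} precisely by combining Lemma~\ref{Corollaryb} (with $C=(32e/d)^{s/2}$, $\alpha=s/d$, $N=9^de^{d/2}$) with the upper bound of Proposition~\ref{Prop 2}, the hypothesis $\alpha>1/r$ being the assumption $s>d(\frac12-\frac1p)$. Your remark that the required $\ell_r$-summability of the weights is automatic from the decay $\sigma_n\lesssim n^{-\alpha}$ with $\alpha r>1$ is a correct and welcome clarification of a point the paper leaves implicit.
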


\begin{rmk}
 \rm
(i)
Note that the condition $s>d\big(\frac 12 - \frac 1p\big)$ is no restriction, since
\[
H^{s,2}(\mathbb{T}^d) \hookrightarrow  L_p (\mathbb{T}^d) \qquad \Longleftrightarrow 
\qquad \frac sd>\frac 12 - \frac 1p\, .
\]
(ii) The upper bound gives the correct asymptotic rate in $n$ (which is well-known),
the novelty is the explicit constant.\\
(iii) Corollary \ref{Cor 12b} is just one example of possible applications of Proposition \ref{Prop 8}. 
One can also derive results involving Sobolev spaces of dominating mixed smoothness. 
This could easily be done by introducing an additional log-term in the assumption of the 
technical Lemma \ref{Corollaryb} and combining this with known estimates 
of $a_n(I_d: H^s_{\rm{mix}}(\T^d)\to L_2(\T^d))$ from \cite{KSU2}.
\end{rmk}


\end{document}